\newcommand{\LL}{\mathbb{L}}
\newcommand{\Z}{\mathbb{Z}}
\newcommand{\Q}{\mathbb{Q}}
\newcommand{\C}{\mathbb{C}}
\newcommand{\rank}{{\rm rank}}
\newtheorem{thm}{Theorem }[section]
\newtheorem{prop}[thm]{Proposition}
\newtheorem{lem}[thm]{Lemma}
\newtheorem{claim}[thm]{Claim}
\newtheorem{defi}[thm]{Definition}
\newtheorem{cor}[thm]{Corollary}
\newtheorem{exmp}[thm]{Example}
\newtheorem{rem}[thm]{Remark}
\newtheorem{prob}[thm]{Problem}
\newtheorem{ques}[thm]{Question}
\begin{document}
\title{An induced   map between rationalized classifying spaces for fibrations }
\author{ Toshihiro Yamaguchi }
\footnote[0]{2010 MSC:  55P62, 55R15
\\Keywords:   classifying space for fibrations, rational homotopy, Sullivan  model, derivation, Quillen model,  lifting of a group action, rational toral rank }
\address{Kochi University, 2-5-1, Kochi, 780-8520, JAPAN}
\email{tyamag@kochi-u.ac.jp}
\maketitle

\begin{abstract}
Let  $B{ aut}_1X$ be  the Dold-Lashof classifying space of orientable fibrations with fiber $X$ \cite{DL}.
For  a rationally  weakly trivial
map $f:X\to Y$,
our {\it strictly induced } map  $a_f: (Baut_1X)_0\to (Baut_1Y)_0$  
induces  a natural map from a $X_0$-fibration  to a  $Y_0$-fibration. 
It is given by a map 
between  the  differential graded Lie algebras of derivations of  Sullivan models \cite{S}. 
We note  some conditions that the map $a_f$  admits  a section and note some relations with the Halperin conjecture \cite{Hal}.
Furthermore we give the obstruction class for a lifting of a classifying map
$h: B\to (Baut_1Y)_0$   and apply it for liftings of  $G$-actions on $Y$ for a compact connected  Lie group $G$ \cite{Gol} as the case of  $B=BG$ and 
evaluating of rational toral ranks \cite{H}
as  $r_0(Y)\leq r_0(X)$. 
\end{abstract}


\section{Introduction}

Let $X$ (and also $Y$) be a connected and simply connected  CW complex  with $\dim \pi_*(X)_{\Q} <\infty$ 
($G_{\Q}=G\otimes \Q$)
and  $B{ aut}_1X$ be  the Dold-Lashof classifying space of orientable fibrations \cite{DL}.
Here $aut_1X=map(X,X;id_X)$ is the identity component of the space $autX$ of self-equivalences of $X$.
Then any orientable fibration $\xi$ with fibre $X$ over a base space $B$ is  the  pull-back of a universal fibration 
$X\to E_{\infty}^X\to B{ aut}_1X$
by a map $h:B\to   B{ aut}_1X$
and equivalence classes of $\xi$ are classified by their homotopy classes 
\cite{DL}, \cite{Sta}, \cite{Al} (So the map $h$ is often said as the classifying map for the  fibration $\xi$).  
The {Sullivan minimal model} $M(X)$ (\cite{S}) 
determines the rational homotopy type  of $X$,
the homotopy type of the rationalization $X_0$ \cite{HMR} of $X$.
The differential graded Lie algebra (DGL) $DerM(X)  $, the negative derivations of $M(X)$
(see \S 2),  gives rise to a Quillen model for $Baut_1X$ due to Sullivan \cite{S} (cf.\cite{T}, \cite{G}), 
i.e., the spatial realization $||DerM(X)||$ is $(Baut_1X)_0$.
Therefore  we obtain
a map
$ (Baut_1X)_0\to (Baut_1Y)_0$
if there is a DGL-map
$Der  M(X)\to DerM(Y)  $.
However it does {\it not} exist in general.

Let  $f:X\to Y$ be a map whose homotopy fibration $\xi_f: F_f\to X\to Y$ is given by the relative  model
(Koszul-Sullivan extension)
$$M(Y)=(\Lambda V,d)\to (\Lambda V\otimes \Lambda W,D)\simeq M(X)$$
for a certain differential $D$ with $D\mid_{\Lambda V}=d$, where $M(F_f)\cong  ( \Lambda W,\overline{D})$
for the homotopy fiber $F_f$ of $f$ \cite{FHT}.
In this paper, we say a map is {\it rationally  weakly trivial} (abbr., $\Q$-w.t.)
if  $\xi_f$ is rationally  weakly trivial; i.e.,
$\pi_*(X)_{\Q}=\pi_*(F_f)_{\Q}\oplus \pi_*(Y)_{\Q}$.
Then  $(\Lambda V\otimes \Lambda W,D)$
is just the minimal model $M(X)$ of $X$.

\begin{defi}
We say that a $\Q$-w.t. map $f:X\to Y$ strictly induces the map  $$a_f: (Baut_1X)_0\to (Baut_1Y)_0$$
if its Quillen model is given by   the DGL-map $$b_f:Der  (\Lambda V\otimes \Lambda W,D)\to Der(\Lambda V,d)  $$ 
 given by $b_f (\sigma )={\rm proj}_V\circ \sigma  $ with $||b_f||= a_f$.
Here ${\rm proj}_V:\Lambda V\otimes \Lambda W\to \Lambda V$  is the algebra map with 
${\rm proj}_V(w)=0$ for $w\in W$ and  ${\rm proj}_V\mid_{\Lambda V}=id_{\Lambda V}$.
\end{defi}

Two fibration $\xi_{f_1}$ and $\xi_{f_2}$ are fibre homotopy  equivalent 
if there is a  diagram:
$${\small  
\xymatrix{ F_{f_1}\ar[r]^{i_1}&
 X_1\ar[r]^{f_1}& Y\ar@{=}[d]\\
F_{f_2}\ar[u]^{\sim}_{\overline{\psi}}\ar[r]^{i_2}& 
{ X_2}\ar[u]_{{\exists \psi }}^{\sim}\ar[r]^{f_2} & Y ,}
}$$
where $\psi\circ i_2\simeq i_1\circ  \overline{\psi}$ and $f_1\circ \psi=f_2$.
Then  its Sullivan model is given as 
$${\small 
\xymatrix{ \Lambda V ,d\ar[r]\ar@{=}[d]&\Lambda V\otimes \Lambda W,D_1\ar[r]\ar[d]_{\psi}^{\cong}& \Lambda W,
\overline{D}_1\ar[d]^{\cong}_{\overline{\psi}}\\
\Lambda V,d \ar[r]&\Lambda V\otimes \Lambda W,D_2\ar[r]& \Lambda W,\overline{D}_2,
}
}$$
where the left square is DGA-commutative and the right square is DGA-homotopy commutative.
\begin{lem}\label{equi}
Suppose that  two maps $f_1$ and $f_2$ strictly induce $a_{f_1}$ and $a_{f_2}$, respectively. 
 If $\xi_{f_1}$ and $\xi_{f_2}$ are  fibre homotopy equivalent,
there is a DGL-isomorphism $\phi :Der  (\Lambda V\otimes \Lambda W,D_1)\cong Der  (\Lambda V\otimes \Lambda W,D_2)$
such that  $\phi (\sigma )=\psi \circ \sigma \circ \psi^{-1}$ and    $ b_{f_2}\circ \phi =b_{f_1}$.
Thus there is a homotopy equivalence map $\phi : (Baut_1X_1)_0\overset{\sim}{\to} (Baut_1X_2)_0$
such that $a_{f_2}\circ \phi=a_{f_1}$, i.e.,  $ a_{f_1}$ and $a_{f_2}$ are  fibre homotopy  equivalent as fibrations. 
\end{lem}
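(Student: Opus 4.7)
The plan is to define $\phi$ by conjugation with $\psi$, check that it is a DGL-isomorphism and intertwines $b_{f_1}$ with $b_{f_2}$, and then pass to spatial realizations. Set $\phi(\sigma):=\psi\circ\sigma\circ\psi^{-1}$ on $\mathrm{Der}(\Lambda V\otimes\Lambda W, D_1)$. Routine checks show this is a DGL-isomorphism: conjugation by an algebra automorphism sends graded derivations to graded derivations and preserves the graded commutator bracket; the relation $\psi D_1=D_2\psi$ makes $\phi$ commute with the two derivation-differentials $[D_i,-]$; and $\sigma'\mapsto\psi^{-1}\circ\sigma'\circ\psi$ gives a two-sided inverse.

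The heart of the argument is the identity $b_{f_2}\circ\phi=b_{f_1}$, i.e.\ $\mathrm{proj}_V(\psi\sigma\psi^{-1}(v))=\mathrm{proj}_V(\sigma(v))$ for every $v\in V$. The strict commutativity of the left square gives $\psi|_{\Lambda V}=\mathrm{id}_{\Lambda V}$, which in turn forces $\psi^{-1}(v)=v$ for $v\in V$ by injectivity of $\psi$ on $\Lambda V$, so the equation reduces to $\mathrm{proj}_V(\psi(\sigma(v)))=\mathrm{proj}_V(\sigma(v))$. Decomposing $\sigma(v)=a+b$ with $a\in\Lambda V$ and $b\in\Lambda V\otimes\Lambda^+W$, and using $\psi(a)=a$, this reduces further to $\mathrm{proj}_V(\psi(b))=0$; equivalently, $\psi$ must preserve the ideal $\Lambda V\otimes\Lambda^+W$.

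The step I expect to require the most care is precisely this ideal preservation, since the right square of the fibre-equivalence diagram is only DGA-homotopy commutative. I would handle it by noting that, as $\psi$ is a DGA-isomorphism between two Koszul--Sullivan extensions of the common base $(\Lambda V,d)$ restricting to the identity on that base, one may choose $\psi$ within its DGA-homotopy class so that it preserves the $W$-word-length filtration on $\Lambda V\otimes\Lambda W$; with such a choice $\psi(w)\in\Lambda V\otimes\Lambda^+W$ for every $w\in W$, and $\overline{\psi}$ is then literally the induced map on the quotient $\Lambda W$. Finally, applying Sullivan's spatial realization $\|\cdot\|$ to the DGL-iso $\phi$ produces the homotopy equivalence $(B\mathrm{aut}_1X_1)_0\overset{\sim}{\to}(B\mathrm{aut}_1X_2)_0$, and realizing $b_{f_2}\circ\phi=b_{f_1}$ yields $a_{f_2}\circ\phi=a_{f_1}$, concluding the fibre homotopy equivalence of the induced classifying maps.
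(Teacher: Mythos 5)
Your construction of $\phi$ and the verifications that it is a DGL-isomorphism commuting with the differentials and the brackets are exactly the paper's argument, and they are fine. The gap is at the step you yourself flag as the heart of the matter: you reduce $b_{f_2}\circ\phi=b_{f_1}$ to the claim that $\psi$ preserves the ideal $\Lambda V\otimes\Lambda^{+}W$, and you then assert, without proof, that $\psi$ may be chosen within its DGA-homotopy class so as to preserve the $W$-word-length filtration. That assertion is not justified: naively replacing $\psi(w)$ by $\psi(w)-{\rm proj}_V(\psi(w))$ need not commute with the differentials, and you give no obstruction argument showing that a strict filtration-preserving representative of the homotopy class exists. As written, the key identity is therefore not established.

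Fortunately the difficulty you are trying to circumvent does not arise. Since $f_1$ and $f_2$ strictly induce their maps, they are $\pi_{\Q}$-separable (Proposition \ref{sep}), i.e. $\min W\geq\max V$. For $\sigma\in Der_i(\Lambda V\otimes\Lambda W,D_1)$ with $i>0$ and $v\in V$ one has $|\sigma(v)|=|v|-i<\max V\leq\min W$, while every monomial of $\Lambda V\otimes\Lambda W$ containing a factor from $W$ has degree at least $\min W$; hence $\sigma(v)\in\Lambda V$ automatically. In your decomposition $\sigma(v)=a+b$ the term $b$ vanishes, so $\phi(\sigma)(v)=\psi(\sigma(v))=\sigma(v)$ and $b_{f_2}(\phi(\sigma))=b_{f_1}(\sigma)$ with no condition on $\psi(W)$ whatsoever. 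With this degree argument replacing your filtration claim, the proof closes and coincides with the paper's.
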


Let $\min  \pi_*(S)_{\Q}:=\min \{ i>0\mid \pi_i(S)_{\Q}\neq 0\}$ and  $\max  \pi_*(S)_{\Q}:=\max \{ i\geq 0\mid \pi_i(S)_{\Q}\neq 0\}$
 for a space $S$.
In particular, $\min  \pi_*(S)_{\Q}:=\infty$ when $S$ is the  one point space.

\begin{defi}\label{sepa}
A fibration $\xi_f :F_f\to X\overset{f}{\to} Y$ or a map $f:X\to Y$ with homotopy fiber $F_f$ is said to be $\pi_{\Q}$-{\it separable} if
 $\min \pi_*(F_f)_{\Q}\geq \max \pi_*(Y)_{\Q}.$
\end{defi}

If  a map $f:X\to Y$ is  $\pi_{\Q}$-{\it separable}, it is   $\Q$-w.t.
The  condition to be $\pi_{\Q}$-{separable} is equivalent to the  condition that 
$\min  W=\min \{ i>0\mid W^i\neq 0\}\geq \max  V=\max \{ i>0\mid V^i\neq 0\}$ in the relative minimal model $M(Y)=(\Lambda V,d)\to (\Lambda V\otimes \Lambda W,D)$ of $\xi_f$.

\begin{rem}  Recall   a question related to Gottlieb \cite[\S 5]{G2}:
  {\it Which  map $f:X\to Y$ can be extended to a map 
  between  fibrations over a fixed base space $B$,
  that is,
for any fibration $\xi :X\to E\to B$,
does there exist
a fibration $\eta :Y\to E'\to B$ and 
 a map $f':E\to E'$ in  the diagram:
  $${\small  
\xymatrix{ X\ar[d]_{f}\ar[r]^{i}&
 E\ar@{.>}[d]_{{ f'}}\ar[r]^{p}& B\ar@{=}[d]\\
Y\ar[r]^{i'}& 
{ E'}\ar[r]^{p'} & B
\\
}}$$
where $f'\circ i\simeq i'\circ f$ and $p=p'\circ f'$ ?} \cite[Example 3.8]{Y}.
If a map $f:X\to Y$ is  $\pi_{\Q}$-separable, then due to Sullivan minimal model theory, it is obvious that
there are a fibration $\eta$ after rationalization and a map $f':E_0\to E'$ in the diagram: $${\small  
\xymatrix{ X_0\ar[d]_{f_0}\ar[r]^{i_0}&
 E_0\ar@{.>}[d]_{{ f'}}\ar[r]^{p_0}& B_0\ar@{=}[d]\\
Y_0\ar[r]^{i'}& 
{ E'}\ar[r]^{p'} & B_0
\\
}}$$ 
where $f'\circ i_0\simeq i'\circ f_0$ and $p_0=p'\circ f'$.
In particular,  let $f:X\to X(n)$ be the rationalized  Postnikov $n$-stage map of $X$, where 
$\pi_{>n}(X(n))=0$.
Then there exists the  rationally fibre-trivial fibration $\eta$ such that
 $${\small  
\xymatrix{ X\ar[d]_{f}\ar[r]^{i}&
 E\ar@{.>}[d]_{{ f'}}\ar[r]^{p}& B\ar[d]^{l_0}\\
X(n)\ar[r]^{}& 
{ X(n)\times B_0}\ar[r] & B_0
\\
}}$$
homotopically commutes when the rationalized classifying map 
$B\to (Baut_1X)_0\overset{a_f}\to ( Baut_1X(n))_0$
is homotopic to the constant map for  a sufficiently small  $n$.
\end{rem}

\begin{prop}\label{sep}
A $\Q$-w.t. map $f:X\to Y$ strictly induces  $a_f: (Baut_1X)_0\to (Baut_1Y)_0$  if and only if $f$ is  $\pi_{\Q}$-{separable}.
\end{prop}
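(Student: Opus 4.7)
The plan is to analyze when the formula $b_f(\sigma)=\mathrm{proj}_V\circ\sigma$ actually defines a chain map between the DGLs of derivations, and then match the resulting algebraic condition with the degree inequality $\min W\ge\max V$ characterizing $\pi_{\Q}$-separability. First, $b_f$ is tautologically degree-preserving and a graded Lie-algebra homomorphism, since $\mathrm{proj}_V$ is an algebra map and the bracket of derivations behaves well under such post-composition. The substance of the proposition is therefore differential compatibility. Evaluating $b_f([D,\sigma])-[d,b_f(\sigma)]$ on a generator $v\in V$, writing $\sigma(v)=a+b$ with $a\in\Lambda V$ and $b\in\Lambda V\otimes\Lambda^+W$, and using $D|_{\Lambda V}=d$, the obstruction collapses to $\mathrm{proj}_V(D(b))$. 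Since $\sigma$ is free on generators and $D$ is a derivation commuting with the algebra map $\mathrm{proj}_V$ on $\Lambda V$, this reduces to the generator-level condition
\[
\mathrm{proj}_V(D(w))=0\quad\text{for every }w\in W,
\]
equivalently $D(w)\in\Lambda V\otimes\Lambda^+W$.

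With this reduction in hand, both directions become statements about the relative minimal model $(\Lambda V\otimes\Lambda W,D)$. For $(\Rightarrow)$, assume $b_f$ is a DGL-map, so $D(w)\in\Lambda V\otimes\Lambda^+W$ for every $w\in W$, and suppose for contradiction that $\min W<\max V$. Picking $w\in W$ of minimal degree and $v\in V$ of strictly larger degree, I would exhibit the clash either by constructing a derivation $\sigma$ whose chain-map failure detects a nonzero pure-$V$ contribution, or by directly analysing the admissible monomials of degree $|w|+1$ in the $\Q$-w.t.\ minimal decomposition $D(w)\in\Lambda^{\ge 2}(V\oplus W)$ under the assumption $|w|<\max V$.

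For $(\Leftarrow)$, assume $\min W\ge\max V$. I would proceed by induction on a well-ordered basis $\{w_\alpha\}$ of $W$ compatible with the KS-filtration of the relative extension, at each stage ruling out the pure-$V$ summand of $D(w_\alpha)$ by combining $\Q$-w.t.\ minimality with $\min W\ge\max V$ and the closure condition $D^2=0$ against lower-filtration pieces. The main obstacle lies precisely here: the bound $\min W\ge\max V$ does not, by a bare monomial-degree count, preclude $|w|+1$ from being realized in $\Lambda^{\ge 2}V$ (for example via cubes of even-degree $V$-generators), so the argument must use the finer internal structure of the minimal KS-extension---in particular that $D(w_\alpha)$ is determined modulo lower-filtration pieces and must satisfy $D\circ D=0$ against them---rather than a purely degree-theoretic bound. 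Pinning down this combinatorial step is where the real work of the proof will take place.
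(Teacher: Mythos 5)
Your proposal rests on the claim that $b_f$ is ``tautologically \dots a graded Lie-algebra homomorphism,'' and this is false; it is precisely the point on which the paper's (only if) direction turns. Post-composition with ${\rm proj}_V$ does not commute with composition of derivations: a derivation $\tau$ may send $v\in V$ into $\Lambda V\otimes \Lambda^+W$ (killed by ${\rm proj}_V$) while a second derivation then sends that $W$-part back into $\Lambda V$. Concretely, if $|w|<|v|$ then $(w,1)$ and $(v,w)$ are legitimate elements of $Der(\Lambda V\otimes\Lambda W)$ (both of positive degree) with $[(w,1),(v,w)]=(v,1)$, so $b_f$ of the bracket is $(v,1)\neq 0$ while the bracket of the images is $[0,0]=0$. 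That is the paper's entire (only if) argument. Your route --- detecting the failure of separability through the differential alone --- cannot be completed: for a product fibration with $\min W<\max V$ (say $D=0$, $|w|=3$, $|v|=5$) both differentials vanish and chain-map compatibility holds vacuously, yet $b_f$ still fails to be a DGL-map.

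The second gap is in your generator-level reduction of the chain-map condition to ``${\rm proj}_V(D(w))=0$ for every $w\in W$.'' This over-quantifies: the failure term ${\rm proj}_V(D(b))$ involves only those $w$ that can occur linearly in $\sigma(v)$ for some $\sigma$ of \emph{positive} degree, i.e.\ only $w$ with $|w|<|v|$ for some $v\in V$ (the elementary derivation $(v,w)$ must have degree $|v|-|w|>0$; recall $DerM(X)=\oplus_{i>0}Der_iM(X)$). Under $\min W\geq \max V$ no such $w$ exists: every $\sigma\in Der_{>0}$ sends $v$ to an element of degree $<\max V\leq \min W$, which therefore lies in $\Lambda V$. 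This single degree observation is the whole (if) direction --- it gives the splitting $Der(\Lambda V\otimes\Lambda W)=Der(\Lambda V)\oplus Der(\Lambda W,\Lambda V\otimes\Lambda W)$ used in the paper, from which compatibility with both $\partial$ and $[\ ,\ ]$ is immediate, with no induction over a KS-basis and no use of $D^2=0$. The intermediate statement you set out to prove instead, $D(w)\in\Lambda V\otimes\Lambda^+W$ for all $w$, is simply false for $\pi_{\Q}$-separable maps (take $M(Y)=(\Lambda(x,y),d)$ with $|x|=2$, $|y|=3$, $dy=x^2$, and $W=\langle w\rangle$ with $|w|=3$, $Dw=x^2$); the ``combinatorial step'' you defer is not the real work of the proof but an attempt to establish something untrue.
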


\begin{thm}\label{xy}For 
a $\pi_{\Q}$-separable map $f:X\to Y$, let 
 $${\small  
\xymatrix{ 
B_0\ar[d]_{{ g}}\ar[r]^{h\ \ }& (Baut_1X)_0\ar[d]^{a_f}\\
{ B'_0}\ar[r]^{h'\ \ } & (Baut_1Y)_0
\\
}}$$
be a  commutative diagram.
Then there exists a map between  total spaces $k :E\to E'$ in the diagram:
$${\small  
\xymatrix{ X_0\ar[d]_{f_0}\ar[r]^{i}&
 E\ar@{.>}[d]_{{ k}}\ar[r]^{p}& B_0\ar[d]^{g}\\
Y_0\ar[r]^{i'}& 
{ E'}\ar[r]^{p'} & B'_0
\\
}}$$
where $k\circ i\simeq i'\circ f_0$ and $g\circ p=p'\circ k$.
Here $p:E\to B_0$ and $p':E'\to B'_0$ are induced by the rationalized classifying maps $h$ and $h'$, respectively. 
\end{thm}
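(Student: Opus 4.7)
The strategy is to construct $k$ by realizing, via the Sullivan functor, a DGA-map between relative Sullivan models in the opposite direction. Write $M(B_0)=(\Lambda Z,d_Z)$, $M(B'_0)=(\Lambda Z',d_{Z'})$, and let $\alpha:(\Lambda Z',d_{Z'})\to(\Lambda Z,d_Z)$ be the Sullivan model of $g$. The fibrations $p$ and $p'$ have KS-extensions
$$(\Lambda Z,d_Z)\to(\Lambda Z\otimes\Lambda V\otimes\Lambda W,D_E)\quad\text{and}\quad(\Lambda Z',d_{Z'})\to(\Lambda Z'\otimes\Lambda V,D_{E'})$$
with fibers $M(X)$ and $M(Y)$. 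I aim to define the DGA-map
$$k^*:(\Lambda Z'\otimes\Lambda V,D_{E'})\longrightarrow(\Lambda Z\otimes\Lambda V\otimes\Lambda W,D_E)$$
by $k^*|_{\Lambda Z'}=\alpha$ and $k^*|_{\Lambda V}=$ the inclusion $v\mapsto v$ (which is the relative model of $f_0$), extended multiplicatively. Its spatial realization is the desired $k:E\to E'$; the conditions $p'\circ k=g\circ p$ and $k\circ i\simeq i'\circ f_0$ are built into the two pieces of $k^*$.

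\textbf{Step 1: a degree count.} Since $f$ is $\pi_{\Q}$-separable, $\min W\geq\max V$. For any negative derivation $\sigma\in Der M(X)$ and $v\in V$, every term of $\sigma(v)\in\Lambda V\otimes\Lambda W$ has degree $|v|+|\sigma|<|v|\leq\max V\leq\min W$, so it cannot contain a factor from $W$. Hence $\sigma(v)\in\Lambda V$. Expanding $D_E|_V=d+\theta_E$ as a $\Lambda^+Z$-linear combination $\theta_E(v)=\sum_I z_I\otimes\xi_I(v)$ with $\xi_I\in Der M(X)$ of suitable degree, and using that $B_0$ is simply connected (so generators of $Z$ have degree $\geq 2$), the same count gives $D_E(v)\in\Lambda Z\otimes\Lambda V$ for every $v\in V$; in particular $\theta_E(v)\in\Lambda^+Z\otimes\Lambda V$.

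\textbf{Step 2: compatibility of twistings.} By Proposition~\ref{sep}, $a_f$ is realized by the DGL-map $b_f(\sigma)={\rm proj}_V\circ\sigma$, and Step 1 gives $b_f(\xi_I)(v)=\xi_I(v)$ for $v\in V$. The hypothesis $a_f\circ h\simeq h'\circ g$ says that two $Y_0$-fibrations over $B_0$ are fiber homotopy equivalent: the pullback $g^*E'$ (classified by $h'\circ g$), with relative minimal model $(\Lambda Z\otimes\Lambda V,(\alpha\otimes{\rm id}_{\Lambda V})\circ D_{E'})$, and the $b_f$-pushforward of $E$ (classified by $a_f\circ h$), whose twisting on $v\in V$ is $b_f(\theta_E)(v)=\theta_E(v)$. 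Passing to a common relative minimal model over $(\Lambda Z,d_Z)$ and adjusting by a DGA-isomorphism that is the identity on $\Lambda Z$ and on $\Lambda V$, one obtains the equality $D_E(v)=(\alpha\otimes{\rm id}_{\Lambda V})(D_{E'}(v))$ for all $v\in V$.

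\textbf{Step 3: realization and main obstacle.} With Step 2 and $\alpha$ a DGA-map, $k^*$ commutes with differentials on $Z'$ and on $V$, hence everywhere; Sullivan realization then produces $k:E\to E'$ as required. The \emph{main obstacle} is Step 2, namely promoting the homotopy commutativity of the classifying square into an actual equality of twisted differentials on $\Lambda V$-generators. This uses essentially the strictness of the induction (so that $b_f$ has the explicit form ${\rm proj}_V\circ(-)$) together with the degree count of Step 1; without $\pi_{\Q}$-separability, $b_f(\sigma)$ would generally differ from $\sigma$ on $V$-inputs, $D_E(v)$ could acquire $W$-terms, and the naive prescription $v\mapsto v$ would fail to intertwine the two twisted differentials.
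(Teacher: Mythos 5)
Your argument is essentially correct, but it follows a genuinely different route from the paper. The paper works at the universal level: using Claim \ref{rea} and $\max V\leq\min W$ it exhibits $C^*(Der(\Lambda V))\otimes\Lambda V$ as a sub-DGA of $C^*(Der(\Lambda V\otimes\Lambda W))\otimes\Lambda V\otimes\Lambda W$, realizes this inclusion to get a single map $\tilde a_f:(E_\infty^X)_0\to(E_\infty^Y)_0$ covering $a_f$ between the universal total spaces, and then produces $k$ purely formally from the universal property of the pullback $E'=h'^*(E_\infty^Y)_0$, using $h'\circ g\circ p=(p_\infty^Y)_0\circ\tilde a_f\circ\tilde h$. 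You instead work directly over the bases $B_0$ and $B'_0$ with relative Sullivan models and build $k^*$ by hand; your Step 1 is the same degree count that underlies the paper's sub-DGA observation (and Proposition \ref{sep}), but your Step 2 replaces the pullback universal property by an appeal to the classification theorem to match the two twisted differentials. The paper's route is cleaner: the comparison is done once, universally, and the strict equality $g\circ p=p'\circ k$ comes for free from the pullback; your route has to re-match differentials for each $(h,h',g)$ but is more explicit and makes visible exactly where $\pi_{\Q}$-separability enters.

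One point in Step 2 is overstated: homotopy commutativity of the classifying square gives a fibre homotopy equivalence between $g^*E'$ and the $b_f$-pushforward of $E$ over $B_0$, i.e.\ a DGA-isomorphism over $\Lambda Z$ whose restriction to the fibre $\Lambda V$ is only \emph{homotopic} to the identity, not equal to it; you cannot in general "adjust" to get the literal equality $D_E(v)=(\alpha\otimes{\rm id})(D_{E'}(v))$. This is harmless for the theorem: define $k^*$ as $\alpha\otimes{\rm id}$ followed by that isomorphism and then by the sub-DGA inclusion of Step 1, and the fibre restriction of $k$ is still homotopic to $f_0$, which is all that $k\circ i\simeq i'\circ f_0$ requires — but the claim of an exact equality should be weakened accordingly.
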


Let $f:X\to Y$ be a map with a  section $s$, i.e., there is a map $s:Y\to X$ with $f\circ s\simeq id_Y$.
Then there is a map $\psi_f :aut_1X\to aut_1Y$
with $\psi_f(g):=f\circ g\circ s$ for $g\in aut_1X$. 
In general, this does not preserve the monoid structures.

\begin{thm}\label{section}
If  a $\pi_{\Q}$-separable map $f$ admits a  section,
  $\Omega a_f\simeq (\psi_f)_0$. 
\end{thm}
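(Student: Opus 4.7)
The plan is to verify that both $\Omega a_f$ and $(\psi_f)_0$ are realized by the same DGL map $b_f$ on Quillen models, hence coincide up to homotopy. The Dold-Lashof equivalence $aut_1X\simeq \Omega Baut_1X$, together with the fact that $DerM(X)$ is a Quillen model of $Baut_1X$, shows that $\Omega a_f:(aut_1X)_0\to (aut_1Y)_0$ is realized by the very same DGL map $b_f$ that realizes $a_f$; looping shifts degree conventions but preserves the underlying DGL morphism.

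Next I would construct a model of $\psi_f$ at the derivation level. Let $\iota:(\Lambda V,d)\hookrightarrow (\Lambda V\otimes \Lambda W,D)$ denote the Sullivan model of $f$, and fix a DGA retraction $\mu:(\Lambda V\otimes \Lambda W,D)\to (\Lambda V,d)$ with $\mu\circ \iota=id_{\Lambda V}$ modelling the section $s$. Using the Sullivan-Schlessinger identification $\pi_k(aut_1X)_{\Q}\cong H_k(DerM(X))$, the induced map $(\psi_f)_*$ on rational homotopy sends the class of a derivation $\sigma\in DerM(X)$ of degree $-k$ to the class of the derivation $\tau$ of $(\Lambda V,d)$ of degree $-k$ determined on generators by $\tau(v)=\mu(\sigma(v))$ for $v\in V$; this is a direct consequence of the defining formula $\psi_f(g)=f\circ g\circ s$ after passing to cochain models and differentiating at the identity.

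The key is a degree argument exploiting $\pi_{\Q}$-separability. For $k\geq 1$, $v\in V$ and $\sigma$ of degree $-k$, the element $\sigma(v)$ has degree $|v|-k$. Any monomial of $\Lambda V\otimes \Lambda W$ involving a factor from $W$ has degree at least $\min W$, yet $|v|-k\leq \max V-1<\min W$ by $\pi_{\Q}$-separability. Hence $\sigma(v)\in \Lambda V$, and therefore $\mu(\sigma(v))=\sigma(v)=\text{proj}_V(\sigma(v))=b_f(\sigma)(v)$, forcing $\tau=b_f(\sigma)$ as derivations of $\Lambda V$.

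Consequently, $(\psi_f)_0$ is realized by $b_f$ on the Quillen model, the same DGL map realizing $\Omega a_f$; hence $\Omega a_f\simeq (\psi_f)_0$. The main obstacle is the rigorous justification that the rational Samelson tangent of $\psi_f=(g\mapsto f\circ g\circ s)$ at $id_X$ is the derivation-level composition $\sigma\mapsto \mu\circ \sigma\circ \iota$; once this translation from the space-level formula to the derivation-level one is in hand, the degree estimate coming from $\pi_{\Q}$-separability trivially identifies $\tau$ with $b_f(\sigma)$.
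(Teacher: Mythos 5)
Your proposal is correct and follows essentially the same route as the paper: identify the derivation-level model of $\psi_f$ induced by the section, and observe that under $\pi_{\Q}$-separability a degree argument forces it to coincide with $b_f$, which also models $\Omega a_f$. The paper works with $\mathrm{proj}_V\circ\sigma$ directly (made a chain map by the section condition $DW\subset \Lambda V\otimes\Lambda^+W$, citing Thomas) and justifies the identification of $\pi_n(\psi_f)_{\Q}$ with this chain map by reference to Sullivan and Salvatore --- exactly the point you flag as the remaining obstacle --- so your use of an abstract retraction $\mu$ is only a cosmetic variation.
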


In \S 2, we give the proofs under some preparations of models of \cite{FHT} and \cite{T}. 
 In this paper, we consider only  $\Q$-w.t. maps.
For example, we do not consider the inclusion map $i_X:X\to X\times Y$, which is not  $\Q$-w.t.
However $i_X$ induces the monoid map $\psi :aut_1X\to aut_1(X\times Y) $ by $\psi (g)=g\times 1_Y$
and therefore there exists the induced map $B\psi :Baut_1X\to B aut_1(X\times Y) $ without rationalization.
The DGL  model  is  given by the natural inclusion 
$Der M(X)\to Der (M(X)\otimes M(Y))$, which is a DGL-map.

Let ${\rm Sep}_{\Q}$ be the  category that the objects are  simply connected CW-complexes 
of finite dimensional rational homotopy groups
and morphisms are  $\Q$-separable maps. 
When $f:X\to Y$ and $g:Y\to Z$ are  $\pi_{\Q}$-separable maps,
$g\circ f:X\to Z$ is also a  $\pi_{\Q}$-separable map.
Then $$a_{g\circ f}=a_g\circ a_f: (Baut_1X)_0\to (Baut_1Z)_0$$
by our construction. 
In particular $f={\rm id}_X:X\overset{=}{\to} X$
is  $\pi_{\Q}$-separable
and then  $a_f$ is of course the identity map of $(Baut_1X )_0$.
Namely, 
$(Baut_1)_0$ is a functor from ${\rm Sep}_{\Q}$ to $ho(\Q$-$CW_1)$
by $(Baut_1)_0(f):=a_f$.
Here $ho(\Q$-$CW_1)$ is the homotopy category of rational simply connected CW-complexes 
of finite dimensional rational homotopy groups.
This functor is not essentially surjective on objects,
i.e., there are rational spaces that cannot be  realized as $(Baut_1X )_0$  for any $X$ due to Lupton-Smith \cite{LS}(\cite{Sm}).
 Also we can easily find  in Example \ref{real}  there exists  a map  $ (Baut_1X)_0\to (Baut_1Y)_0$ that cannot be  realized as $a_f$  for any  $\pi_{\Q}$-separable map $f$.\\


In \S 3, 
we give some  such conditions for
\begin{ques}\label{que2}
When does (is) the strictly induced map   $a_f: (Baut_1X)_0\to (Baut_1Y)_0$ admit a   section  (fibre-trivial as a fibration) ?
\end{ques}
Some results for this question  are obtained by   Proposition \ref{Tanre} induced from \cite[VI .1.(3) Proposition]{T} that 
the DGL-model of the homotopy fibration $\chi_f: \ F_{a_f}\overset{}{\to} (Baut_1X)_0\overset{a_f}{\to} (Baut_1Y)_0$ is given by
$$ Der  ( \Lambda W, \Lambda V\otimes \Lambda W)\to Der  (\Lambda V\otimes \Lambda W)\overset{b_f}\to Der  (\Lambda V).$$
Let $aut_1f$ be the identity component of the space of all fibre-homotopy self-equivalences of $f$, i.e., 
 $\{ g:X\to X\mid f\circ g=f \ \}$
and  $Baut_1f$  be the classifying space of this topological monoid.
It is known that $Baut_1f\simeq \widetilde{map}(Y,Baut_1(F_f);h)$,
where $h:Y\to Baut_1(F_f)$ is the classifying map of the fibration $F_f\to X\overset{f}{\to} Y$ and
$  \widetilde{map}$ denotes the universal cover of the function space \cite{BHMP}.
 Notice that 
$$Der   ( \Lambda W, \Lambda V\otimes \Lambda W)=Der_{\Lambda V}  ( \Lambda V\otimes \Lambda W)$$
where 
$Der_{\Lambda V}  ( \Lambda V\otimes \Lambda W)$
is the sub DGL of $Der  ( \Lambda V\otimes \Lambda W)$
sending the elements of $\Lambda V$ to zero 
and it  is a Quillen model of $Baut_1f$ 
when $Y$ and $F_f$ are finite
\cite[Theorem 1]{BS} (\cite{FLS}).
Thus we have
\begin{prop}\label{fibre}
When $Y$ and the homotopy fiber $F_f$ are finite
for $f:X\to Y$, the homotopy fiber $F_{a_f}$ of $a_f$ has the rational   homotopy type of  $Baut_1f$.
\end{prop}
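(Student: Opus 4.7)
The plan is to read off the rational homotopy type of $F_{a_f}$ from the DGL-fibration model already exhibited in the excerpt and then match it against a known DGL-model of $Baut_1f$. Concretely, Proposition \ref{Tanre} supplies a sequence of DGLs
\[
Der(\Lambda W,\Lambda V\otimes \Lambda W)\longrightarrow Der(\Lambda V\otimes \Lambda W,D)\overset{b_f}{\longrightarrow} Der(\Lambda V,d)
\]
that models the homotopy fibration $\chi_f:F_{a_f}\to (Baut_1X)_0\to (Baut_1Y)_0$. Since spatial realization $\|\cdot\|$ sends such a DGL fibration to the rationalized fibration of spaces, the kernel DGL $Der(\Lambda W,\Lambda V\otimes \Lambda W)$ is a Quillen model of $F_{a_f}$.

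Next I would invoke the algebraic identification $Der(\Lambda W,\Lambda V\otimes \Lambda W)=Der_{\Lambda V}(\Lambda V\otimes \Lambda W)$ recalled in the excerpt: a $\Lambda V$-linear derivation of $\Lambda V\otimes \Lambda W$ is determined by its restriction to $W$, and that restriction is precisely a $\Lambda W$-derivation with values in $\Lambda V\otimes \Lambda W$ regarded as a $\Lambda W$-module via the inclusion $\Lambda W\hookrightarrow \Lambda V\otimes \Lambda W$; the two differentials agree since both are induced by $D$. Under the finiteness hypotheses on $Y$ and $F_f$, Berglund--Saleh \cite[Theorem 1]{BS} (see also \cite{FLS}) tells us that this same DGL $Der_{\Lambda V}(\Lambda V\otimes \Lambda W)$ is a Quillen model of $Baut_1f$, the classifying space of fibrewise self-equivalences. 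Combining the two identifications and taking $\|\cdot\|$ yields the desired rational homotopy equivalence $F_{a_f}\simeq_{\Q} Baut_1f$.

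The point requiring the most care is the invocation of \cite[Theorem 1]{BS}: that theorem genuinely demands finiteness of $Y$ and of the fibre $F_f$, not merely finiteness of the rational homotopy groups assumed elsewhere in the paper, which is precisely why these extra hypotheses are included in the statement of Proposition \ref{fibre}. One also needs to verify that the kernel-quotient sequence produced by Proposition \ref{Tanre} is an actual fibration sequence of DGLs (for instance, that $b_f$ hits all cycles in each degree, or equivalently that it arises from a relative Sullivan--Quillen model), so that spatial realization preserves the fibre and not just the total space and base; once this bookkeeping is in place the argument is essentially immediate from the two model identifications.
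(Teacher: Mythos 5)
Your argument is exactly the paper's: Proposition \ref{Tanre} identifies the fibre of the DGL model of $\chi_f$ as $Der(\Lambda W,\Lambda V\otimes\Lambda W)=Der_{\Lambda V}(\Lambda V\otimes\Lambda W)$, and \cite[Theorem 1]{BS} identifies that DGL as a Quillen model of $Baut_1f$ under the finiteness hypotheses, so the proposal is correct and takes essentially the same route. (One small slip: \cite{BS} is Buijs--Smith, not Berglund--Saleh.)
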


It is suitable since $F^{-1}(id_Y)=aut_1f$
if there exists a map $F: aut_1X\to aut_1Y$ such that $f\circ g=F(g)\circ f$ for $g\in aut_1X$.

A space $X$ is said to be {\it elliptic} if 
the dimensions of the rational cohomology algebra and homotopy group are both  finite \cite{FHT}.
An elliptic space $X$ is said to be {\it pure}
if $dM(X)^{even}=0$ and $dM(X)^{odd}\subset M(X)^{even}$.
Furthermore a pure space is said  to be an {\it $F_0$-space} (or {\it positively elliptic}) if 
$\dim \pi_{\rm even}(X)\otimes \Q=\dim \pi_{\rm odd}(X)\otimes \Q$ and $H^{\rm odd}(X;\Q )=0$.
Then it is equivalent to 
$ H^*(X;\Q )\cong  \Q [x_1,\cdots ,x_n]/(f_1,\cdots ,f_n),$
in which $|x_i|$, the degree of $x_i$,
 are even and $f_1,\cdots ,f_n$ forms  a regular sequence in the   $\Q$-polynomial algebra $ \Q [x_1,\cdots ,x_n]$,
where
$M(X)=( \Q [x_1,\cdots ,x_n]\otimes \Lambda (y_1, \cdots ,y_n),d)$ with $dx_i=0$ and $dy_i=f_i$.
In 1976,
S. Halperin \cite{Hal} conjectured that the Serre spectral sequences of all fibrations $X\to E\to B$ of simply connected CW complexes collapse at the $E_2$-terms 
for any $F_0$-space $X$ \cite{FHT}.
For compact connected Lie groups $G$ and $H$ where 
$H$ is a subgroup of $G$,
when ${\rm rank}\ G={\rm rank}\ H$,
the homogeneous space $G/H$ satisfies the Halperin conjecture \cite{ST}.
Also 
the Halperin conjecture is true when $n\leq 3$ \cite{Lu}.
In this paper we note  some  relations with the  Halperin  conjecture \cite[\S 39]{FHT}
 due to W. Meier \cite{M} as 

\begin{thm}\label{two}
Let $Y$ be an $F_0$-space.
Then the fibration $\chi_f$ is fibre-trivial
for any $\pi_{\Q}$-separable map 
$f:X\to Y$
if and only if 
$Y$
 satisfies the  Halperin's conjecture.
\end{thm}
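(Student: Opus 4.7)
The plan is to combine the DGL model of $\chi_f$ from Proposition \ref{Tanre} --- namely
\[
L_1 := Der_{\Lambda V}(\Lambda V\otimes \Lambda W) \to L_2 := Der(\Lambda V\otimes \Lambda W) \xrightarrow{b_f} L_3 := Der(\Lambda V, d)
\]
together with Proposition \ref{fibre} (which identifies $F_{a_f} \simeq_{\Q} Baut_1 f$) --- with Meier's characterization \cite{M} of Halperin's conjecture: for an $F_0$-space $Y$, Halperin holds if and only if $(Baut_1 Y)_0 \simeq_{\Q} \prod_i K(\Q, 2k_i)$, equivalently $H_*(L_3)$ is concentrated in odd (lower) degrees and is therefore abelian.

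For the $(\Leftarrow)$ direction, assume $Y$ satisfies Halperin. The graded linear map $s: L_3 \to L_2$ given by $s(\tau)(v) = \tau(v)$ for $v \in V$ and $s(\tau)(w) = 0$ for $w \in W$ is a section of $b_f$ as graded Lie algebras (by direct check using $D|_{\Lambda V} = d$). Its failure to commute with the differential, $\delta(\tau) := [D, s(\tau)] - s([d,\tau])$, takes values in $L_1$ because it vanishes on $V$, and is a cocycle representing the extension class of $L_1 \to L_2 \to L_3$. The Meier hypothesis makes $L_3$ formal with abelian homology in odd degrees; a parity/bidegree argument in the resulting Chevalley--Eilenberg cohomology then shows $[\delta] = 0$. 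Hence $s$ corrects to a strict DGL section, giving the splitting $L_2 \simeq L_1 \oplus L_3$ and rationally $(Baut_1 X)_0 \simeq_{\Q} F_{a_f} \times (Baut_1 Y)_0$, so $\chi_f$ is fibre-trivial.

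For the $(\Rightarrow)$ direction, argue contrapositively. If Halperin fails for $Y$, Meier produces a nonzero class $[\theta] \in H_{even}(L_3)$ (corresponding to a nonzero odd-degree rational homotopy group of $(Baut_1 Y)_0$). One constructs a specific $\pi_{\Q}$-separable $f: X \to Y$ by attaching $W$-generators in sufficiently high degrees ($\min W \geq \max V$) and prescribing $D|_W$ so that the interaction $s(\theta)(Dw)$, together with subsequent higher-order brackets with $L_1$, records $\theta$'s non-boundary character inside $L_2$. The resulting DGL extension $L_1 \to L_2 \to L_3$ then admits no DGL splitting, and $\chi_f$ is not fibre-trivial.

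The main obstacle is the $(\Leftarrow)$ step: translating the odd-degree concentration of $H_*(L_3)$ into the vanishing of the extension class $[\delta]$ in Chevalley--Eilenberg cohomology requires careful bidegree accounting, which is where the formality consequence of Meier's theorem does the essential work. The $(\Rightarrow)$ step requires an explicit construction of $W$ and $D|_W$ realizing a witness for Halperin failure inside $\chi_f$; this is less subtle in spirit but demands careful verification that the obstruction cocycle is genuinely non-exact in the DGL $\mathrm{Hom}$-complex for the chosen $f$.
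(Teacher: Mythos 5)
There is a genuine gap in your $(\Leftarrow)$ direction: you pass from ``$b_f$ admits a strict DGL section'' to ``$L_2\simeq L_1\oplus L_3$, hence $\chi_f$ is fibre-trivial.'' A DGL section only exhibits $L_2$ as a \emph{semidirect} product $L_1\rtimes L_3$; unless the induced action of $L_3$ on $L_1$ (the brackets $[s(\tau),\sigma]$ for $\sigma\in Der(\Lambda W,\Lambda V\otimes\Lambda W)$) is also controlled, the extension need not be a direct product and the realized fibration need not be trivial. The paper is explicit that these are different conditions: in the implication chain $(*)$ preceding Proposition \ref{delta}, ``fibre-trivial $\Rightarrow$ section'' is not reversible, and Example \ref{counter}(2) exhibits a $\pi_{\Q}$-separable $f$ for which $a_f$ admits a section while $\chi_f$ is \emph{not} trivial, precisely because of a surviving bracket $[(v_3,1),(w_2,v_2v_3)]=(w_2,v_2)$. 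The paper's proof therefore needs a second input beyond $\delta_f=0$: using Meier's characterization in the form $Der\,H^*(Y;\Q)=0$ together with Theorem \ref{fiber} (which identifies $L(F_{a_f})$ with $Der(\Lambda W)\otimes H^*(\Lambda V)$), it shows that no element of $Der(\Lambda V)$ can occur as a bracket factor of an element of $Der(\Lambda W,\Lambda V\otimes\Lambda W)$, whence the Sullivan model of $\chi_f$ has $D_2=d\otimes 1\pm 1\otimes\overline{D}_2$. Your ``parity/bidegree argument'' in Chevalley--Eilenberg cohomology, even if completed, only addresses the splitting of the extension, not the triviality of the action, so it cannot close this step.

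Your $(\Rightarrow)$ direction is the right strategy but remains a sketch where the paper gives the actual witness: from a nonzero class $[\sum_i(x_i,h_i)+\sum_j(y_j,g_j)]\in H_{2m}(Der\,M(Y))$ one takes the fibration with fibre $S^a\times S^b$, $Dw_1=0$, $Dw_2=x_kw_1$ with $b-a=|x_k|-1$ and $h_k$ not exact, and computes $\delta_f([\cdot])=[(w_2,h_kw_1)]\neq 0$ via Claim \ref{deltaf}; this kills even weak triviality, hence fibre-triviality. You should supply this (or an equivalent) explicit construction and verify non-exactness of the image, rather than asserting that ``subsequent higher-order brackets record $\theta$'s non-boundary character.''
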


In \S 4, we observe  the cellular obstruction for the lifting 
%
  ${\tilde{h}}$ for a map $h:B\to  (Baut_1Y)_0$:
$${\small  \xymatrix{&  (Baut_1X)_0\ar[d]^{a_f}\\
B \  \ar[r]_{h\ \ \ \ \ \  }\ar@{.>}[ur]^{\tilde{h}}&\ (Baut_1Y)_0
}}$$
for a  $\pi_{\Q}$-separable map $f:X\to Y$.
Of course, it is sufficient to define as  $ \tilde{h}=s\circ h$ if $a_f$ admits a section $s$.
Thus it is a general approach for Question \ref{que2}, which is the case of $B=Baut_1Y$.
Specifically, for a $\pi_{\Q}$-separable map   $f:X\to Y$, let 
$${\small  \xymatrix{B\ar[r]^{h_X\ \ \ }\ar[d]_i&  (Baut_1X)_0\ar[d]^{a_f}\\
B\cup_{\alpha} e^N \  \ar[r]_{h_Y}&\ (Baut_1Y)_0
}}$$
be a  commutative diagram.
Then, from Proposition \ref{fibre},  we define an obstruction class by  derivations  in Theorem \ref{obs} so that
\begin{thm}\label{obstr} Let $f:X\to Y$ 
be a $\pi_{\Q}$-separable map with $Y$ and $F_f$  finite.
There is a lift $h$ such that
$${\small  \xymatrix{B\ar[r]^{h_X\ \ \ }\ar[d]_i&  (Baut_1X)_0\ar[d]^{a_f}\\
B\cup_{\alpha} e^N \  \ar[r]_{h_Y}\ar@{.>}[ur]^{{h}}&\ (Baut_1Y)_0
}}$$
is  commutative if and only if 
${\mathcal O}_{\alpha}(h_X, h_Y)=0$
in $\pi_{N-1}(Baut_1f)_{\Q}$.
 \end{thm}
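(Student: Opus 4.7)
The plan is to apply standard obstruction theory to the homotopy fibration
$$\chi_f : F_{a_f} \longrightarrow (Baut_1X)_0 \xrightarrow{a_f} (Baut_1Y)_0,$$
whose fiber has the rational homotopy type of $Baut_1f$ by Proposition \ref{fibre} (using that both $Y$ and $F_f$ are finite). The primary obstruction to extending a partial lift across a single cell $e^N$ will then automatically live in $\pi_{N-1}$ of this fiber, which is exactly $\pi_{N-1}(Baut_1f)_{\Q}$.

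The first step is to construct the obstruction class. Restricting to the attaching sphere, $h_X \circ \alpha : S^{N-1} \to (Baut_1X)_0$ satisfies $a_f \circ h_X \circ \alpha = h_Y \circ i \circ \alpha$, and the restriction of $h_Y$ to the characteristic map of $e^N$ supplies a canonical nullhomotopy of the right-hand side. By the universal property of the homotopy fiber, the pair consisting of $h_X \circ \alpha$ together with this nullhomotopy determines a homotopy class of maps $S^{N-1} \to F_{a_f}$, and I would define
$$\mathcal{O}_\alpha(h_X, h_Y) \in \pi_{N-1}(F_{a_f}) \cong \pi_{N-1}(Baut_1f)_{\Q}$$
to be this class, transported to the derivation model of $Baut_1f$ via the Quillen DGL $Der_{\Lambda V}(\Lambda V \otimes \Lambda W)$ from \cite{BS}.

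The equivalence then follows from general principles. If $\mathcal{O}_\alpha(h_X, h_Y) = 0$, the map $S^{N-1} \to F_{a_f}$ extends over the disk, which by unwinding the homotopy-fiber construction yields a disk extension $e^N \to (Baut_1X)_0$ of $h_X \circ \alpha$ whose $a_f$-image is homotopic rel boundary to $h_Y|_{e^N}$; pasting with $h_X$ and adjusting $h_Y$ within its homotopy class produces the sought lift $h$. Conversely, any lift $h$ provides a disk extension inside $F_{a_f}$ of the pair defining $\mathcal{O}_\alpha(h_X, h_Y)$, so the class must vanish.

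The main difficulty will be the explicit derivation-level description of $\mathcal{O}_\alpha(h_X, h_Y)$, that is, writing it as a homology class in $Der_{\Lambda V}(\Lambda V \otimes \Lambda W)$ compatibly with the connecting map of the long exact sequence of $\chi_f$; this is the content of the auxiliary Theorem \ref{obs}. Once that identification is in place, the topological lifting equivalence reduces to the standard fact that $\pi_{N-1}$ of the fiber measures obstructions to single-cell extensions in a Serre fibration, applied to $\chi_f$ after the identification of Proposition \ref{fibre}.
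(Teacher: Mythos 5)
Your argument is correct in outline, but it takes a genuinely different route from the paper. The paper never invokes classical single-cell obstruction theory for the fibration $\chi_f$; it works entirely at the level of Quillen models. In Theorem \ref{obs} the square is modelled by a strictly commutative diagram of DGL-maps $h_X:L(B)\to Der(\Lambda V\otimes\Lambda W)$ and $h_Y:L(B)\coprod\LL(u)\to Der(\Lambda V)$ with $|u|=N-1$, the obstruction is \emph{defined} as the homology class of the explicit derivation $\tau(h_Y(u))-h''_X(\partial_{\alpha}(u))$ in $H_{N-2}(Der(\Lambda W,\Lambda V\otimes\Lambda W))=\pi_{N-1}(F_{a_f})_{\Q}$, and both implications are proved by hand: if the class bounds, say $\partial_X(q)=\tau(h_Y(u))-h''_X(\partial_{\alpha}(u))$, the lift is the DGL-map with $h|_{L(B)}=h_X$ and $h(u)=h_Y(u)-q$; conversely a DGL lift forces the class to be exact. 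Theorem \ref{obstr} then follows by quoting Proposition \ref{fibre} ($F_{a_f}\simeq_{\Q}Baut_1f$ for $Y$, $F_f$ finite), exactly as you do. Your approach buys conceptual transparency: the statement becomes an instance of the standard fact that $\pi_{N-1}$ of the fiber of a fibration over a simply connected base measures the obstruction to extending a lift over a single cell, and your topologically defined class (the map $S^{N-1}\to F_{a_f}$ determined by $h_X\circ\alpha$ and the canonical nullhomotopy from $h_Y|_{e^N}$) is the right one. What it does not deliver is the computable derivation formula, and that formula is what the paper actually uses downstream (the $\C P^2$ example, Theorem \ref{lift}, Corollary \ref{coro} all evaluate or kill ${\mathcal O}_{\alpha}$ via $\tau(h_Y(u))-h''_X(\partial_{\alpha}(u))$). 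Since the symbol ${\mathcal O}_{\alpha}(h_X,h_Y)$ in the statement is, in the paper, the class of Theorem \ref{obs}, a proof along your lines is only complete once your homotopy-theoretic class is identified with the algebraic one; you correctly flag this identification as the remaining difficulty, but note that it is essentially the whole content of the paper's argument, so it is deferred rather than avoided.
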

From Theorem \ref{xy} and Theorem \ref{obstr}, we  obtain

\begin{cor}\label{X-Y} Let $f:X\to Y$  
be a $\pi_{\Q}$-separable map with $Y$ and $F_f$  finite. 
Suppose that there is a fibration $Y\to E\to B$.
If  $H^{n+1}(B,\pi_{n}(Baut_1f)_{\Q})=Hom (H_{n+1}(B), \pi_n(Baut_1f)_{\Q})=   0$ for all $n$, 
there exist  a fibration $X_0\to \tilde{E}\to B_0$ 
and 
 a map 
between the fibrations:
  $${\small  
\xymatrix{ X_0\ar[d]_{f_0}\ar[r]^{}&
 \tilde{E}\ar@{.>}[d]_{{ \tilde{f}}}\ar[r]^{}& B_0\ar@{=}[d]\\
Y_0\ar[r]^{}& 
{ E}\ar[r] & B_0.
\\
}}$$
\end{cor}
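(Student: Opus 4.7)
The plan is to reduce the corollary to a lifting problem across $a_f$ and then invoke Theorem \ref{xy}. The fibration $Y\to E\to B$ is classified by a map $B\to Baut_1Y$, which after composing with rationalization $Baut_1Y\to (Baut_1Y)_0$ factors (uniquely up to homotopy, as the target is a simply connected rational space) through a map $h_Y:B_0\to (Baut_1Y)_0$; the rationalized fibration $Y_0\to E_0\to B_0$ which it classifies is what the corollary denotes by the bottom row. If one can produce $h_X:B_0\to (Baut_1X)_0$ with $a_f\circ h_X\simeq h_Y$, then Theorem \ref{xy} applied to the square with $B'_0=B_0$ and $g=\mathrm{id}_{B_0}$ produces an $X_0$-fibration $\tilde E\to B_0$ classified by $h_X$, together with a map $\tilde f:\tilde E\to E$ covering the identity and restricting to $f_0$ on fibres. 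Thus the whole statement reduces to the existence of the lift $h_X$.

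Next I would build the lift inductively over the cells of $B_0$. Assume a partial lift has been defined over the $n$-skeleton. For each attaching map $\alpha:S^n\to B_0^{(n)}$ of an $(n+1)$-cell, Theorem \ref{obstr}, combined with the identification $\pi_*(F_{a_f})\cong \pi_*(Baut_1f)_{\Q}$ coming from Proposition \ref{fibre}, supplies an obstruction class $\mathcal O_\alpha(h_X,h_Y)\in\pi_n(Baut_1f)_{\Q}$ whose vanishing is equivalent to extending the lift across that cell. Assembling these cell-by-cell in the standard way yields a cellular cocycle in $C^{n+1}(B_0;\pi_n(Baut_1f)_{\Q})$ whose cohomology class is the total obstruction to extending the lift from the $n$-skeleton to the $(n+1)$-skeleton, after possibly modifying $h_X$ on the $n$-cells.

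Since the coefficient group $\pi_n(Baut_1f)_{\Q}$ is a $\Q$-vector space, universal coefficients (the relevant $\mathrm{Ext}$ term vanishes because the coefficients are divisible) together with the $\Q$-equivalence $B\to B_0$ give
$$H^{n+1}(B_0;\pi_n(Baut_1f)_{\Q})\cong \mathrm{Hom}(H_{n+1}(B),\pi_n(Baut_1f)_{\Q}),$$
which is zero for all $n$ by hypothesis. Hence the induction proceeds without obstruction, $h_X$ exists globally, and the corollary follows from Theorem \ref{xy}. The one subtle point, where care is needed, is to verify that the derivation-defined class $\mathcal O_\alpha$ of Theorem \ref{obstr} really serves as the cellular obstruction cocycle whose cohomology class governs extension to the next skeleton; this rests on Proposition \ref{fibre} and on the fact that $a_f$ is a map of simply connected rational spaces, so that the classical cellular obstruction machinery over the nilpotent CW-complex $B_0$ applies directly to the fibration $F_{a_f}\to (Baut_1X)_0\to (Baut_1Y)_0$.
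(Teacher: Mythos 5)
Your proposal is correct and follows essentially the same route the paper intends: the paper derives this corollary directly from Theorem \ref{xy} and Theorem \ref{obstr}, which is exactly your reduction to lifting $h_Y$ through $a_f$ via a cell-by-cell obstruction argument with coefficients in $\pi_n(Baut_1f)_{\Q}$ (compare the paper's proof of Theorem \ref{lift}, which runs the same induction in the special case $B=BG$ where the obstruction groups vanish for degree reasons). Your explicit handling of the universal-coefficients identification and of the need to modify the lift on the $n$-skeleton fills in details the paper leaves implicit, but introduces no new idea.
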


\begin{rem}
Let $g:Y\to Z$ and $f:X\to Y$ be $\pi_{\Q}$-separable maps given by the models
$M(Z)=\Lambda U\to \Lambda (U\oplus V)=M(Y)$
and $M(Y)=\Lambda (U\oplus V)\to \Lambda (U\oplus V\oplus W)=M(X)$, respectively.
Then $\pi_*(Baut_1g\circ f)_{\Q}=
H_{*-1}(Der_{\Lambda U}(\Lambda (U\oplus V\oplus W))=
H_{*-1}(Der_{\Lambda U}(\Lambda (U\oplus V))\oplus H_{*-1}(Der_{\Lambda (U\oplus V)}(\Lambda (U\oplus V\oplus W))=
\pi_*(Baut_1 g)_{\Q}\oplus \pi_*(Baut_1f)_{\Q}$.
\end{rem}

\vspace{0.5cm}

In \S 5, we consider an application to lifting actions.
Let $G$ be a topological  group and acts on a CW complex $Y$.
Recall the problem of lifting (up to homotopy) of D. H. Gottlieb \cite{Gol}:

\begin{prob}
When is  a fibration $F_f\to X\overset{f}{\to} Y$ fibre homotopy equivalent to a $G$-fibration ?
i.e.,  when is there  a fibartion $f':X'\to Y$ such that 
$f'$ is fibre homotopy equivalent to $f$ and there is a $G$-action on $X'$ such that $f'$ is equivariant ? 
\end{prob}


Suppose that $G$ is a  compact connected Lie group.
Since $H^*(BG;\Q )$ is evenly graded, 
the  obstruction classes of  Theorem \ref{obstr} are  contained in $\pi_{odd}(Baut_1f)_{\Q}$ 
when $B=BG$.
If $\pi_{odd}(Baut_1f)_{\Q}=0$, they vanish and there exists a lift $h:BG\to  (Baut_1X)_0$.
Then, from Theorem \ref{xy} in the case that $B=B'=BG$ and $g=(id_{BG})_0$,
we obtain by using Theorem  \ref{Gott} of  D. H. Gottlieb

\begin{thm}\label{lift} Let $f:X\to Y$  
be a $\pi_{\Q}$-separable map 
with  $Y$ and $F_f$  finite.
Suppose that a compact Lie group $G$ acts on $Y$.
If  $\pi_{odd}(Baut_1f)_{\Q}=0$, 
the action on $Y$ is rationally lifted to $X$, i.e., 
$f$ is rationally fibre homotopy equivalent to 
a $G$-equivariant map $f':X'\to Y$ for a $G$-space $X'$. 
\end{thm}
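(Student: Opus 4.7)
The plan is to combine the obstruction machinery of Theorem \ref{obstr} with Theorem \ref{xy} and Gottlieb's theorem to transfer the $G$-action from $Y$ to a space rationally equivalent to $X$. The starting point is that the given $G$-action on $Y$ is classified by a map $h_Y \colon BG \to Baut_1 Y$, whose rationalization $(h_Y)_0 \colon BG \to (Baut_1 Y)_0$ we wish to lift through $a_f$ to some $\tilde{h} \colon BG \to (Baut_1 X)_0$. Once the lift exists, I would feed it into Theorem \ref{xy} with $B=B'=BG$ and $g=\mathrm{id}$, producing a map of rationalized Borel-type fibrations that, via Theorem \ref{Gott}, is realized by a $G$-equivariant map $f' \colon X' \to Y$ rationally fibre homotopy equivalent to $f$.

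The heart of the argument is therefore to produce the lift $\tilde h$ using Theorem \ref{obstr} cell by cell. Since $G$ is a compact connected Lie group, $(BG)_0$ is rationally equivalent to a product of even Eilenberg--MacLane spaces $\prod K(\Q,2n_i)$, so we may choose a CW model for $BG$ (or more precisely for the rational target of the classifying map) in which every cell has even dimension. For each attaching map $\alpha \colon S^{N-1} \to B^{(N-1)}$ with $N$ even, the obstruction class
\[
\mathcal{O}_\alpha(\tilde h^{(N-1)},(h_Y)_0) \in \pi_{N-1}(Baut_1 f)_{\Q}
\]
provided by Theorem \ref{obstr} lies in an \emph{odd} homotopy group. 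The hypothesis $\pi_{odd}(Baut_1 f)_{\Q}=0$ forces every such obstruction to vanish, so an inductive extension over skeleta yields a global lift $\tilde h \colon BG \to (Baut_1 X)_0$ with $a_f \circ \tilde h \simeq (h_Y)_0$.

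Having constructed $\tilde h$, Theorem \ref{xy} applied to the commutative square with $g = \mathrm{id}_{BG_0}$, $h = \tilde h$, $h' = (h_Y)_0$ produces a map $k \colon \tilde E \to E'$ fitting into
\[
\xymatrix{X_0 \ar[d]_{f_0}\ar[r] & \tilde E \ar[d]_{k}\ar[r]^{p} & BG_0 \ar@{=}[d]\\
Y_0 \ar[r] & E' \ar[r]_{p'} & BG_0,}
\]
so that $\tilde E \to BG_0$ is the rationalized Borel fibration for an $X_0$-fibration lying over the rationalized Borel fibration $EG \times_G Y \to BG$ (after rationalization). Finally, I would invoke Gottlieb's Theorem \ref{Gott}: a fibration over $BG$ with fibre $X$ is fibre homotopy equivalent to the Borel construction of a $G$-action on a space of the homotopy type of $X$, and the compatibility encoded in $k$ identifies the induced action on $Y$ with the original one, so the map downstairs becomes equivariant. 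This yields the desired $G$-space $X'$ and equivariant map $f' \colon X' \to Y$.

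The main obstacle I expect is not the obstruction-vanishing, which is essentially a parity count, but the final bookkeeping step: turning the rational map of fibrations over $BG_0$ into an honest $G$-equivariant map of spaces rationally equivalent to $X$ and $Y$. This requires applying Gottlieb's classification in the rational setting while preserving the equivariance of the map to $Y$; care is needed because Theorem \ref{xy} produces $k$ only up to the homotopies $k \circ i \simeq i' \circ f_0$ and $g \circ p = p' \circ k$, and one must check these are sufficient to realize $k$ as (the Borel construction of) an equivariant map between actual $G$-spaces rather than only as a map of rationalized fibrations.
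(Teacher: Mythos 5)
Your overall strategy is exactly the paper's: classify the action by $h_Y\colon BG\to (Baut_1Y)_0$, kill the obstructions of Theorem \ref{obstr} skeleton by skeleton using the parity mismatch between the even cells of $BG$ (equivalently, the oddly graded Quillen model $L(BG)$) and the hypothesis $\pi_{odd}(Baut_1f)_{\Q}=0$, feed the resulting lift into Theorem \ref{xy} with $B=B'=BG$ and $g=\mathrm{id}$, and finish with Gottlieb's Theorem \ref{Gott}. The obstruction-vanishing part of your argument is correct and is precisely what the paper does.

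The one step you leave unresolved --- ``turning the rational map of fibrations over $BG_0$ into an honest $G$-equivariant map'' --- is a genuine gap as written, and the paper closes it by a de-rationalization trick that avoids any ``rational version'' of Gottlieb's theorem. Theorem \ref{xy} gives a map $g\colon E\to (EG\times_G Y)_0$ of fibrations over $BG_0$, where $E$ is the total space of the $X_0$-fibration classified by the lift. One then pulls $g$ back along the rationalization $l_0\colon EG\times_G Y\to (EG\times_G Y)_0$ to obtain an honest fibration $g'\colon E'\to EG\times_G Y$ over the \emph{integral} Borel space, and restricts along the inclusion $i\colon Y\to EG\times_G Y$ to get $f'\colon X'\to Y$. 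By construction $f'$ is literally the pull-back of a fibration over $EG\times_G Y$ induced by $i$, so Theorem \ref{Gott} applies verbatim and $f'$ is fibre homotopy equivalent to a $G$-fibration. The remaining bookkeeping is a KS-model computation: the push-out description of the pull-backs (\cite[Proposition 15.8]{FHT}, with $R=H^*(BG;\Q)$) shows $M(X')\cong \Lambda V\otimes\Lambda W= M(X)$, so $X'_0\simeq X_0$ and $f'$ is rationally fibre homotopy equivalent to $f$. Without this pull-back along $l_0$, your plan stalls exactly where you predicted, since Gottlieb's theorem is a statement about genuine fibrations over $EG\times_G Y$, not about rationalized classifying maps.
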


Due to Theorem \ref{two} and the result of H. Shiga - M. Tezuka \cite{ST},
we have 
\begin{cor}
Let $f:X\to Y$  
be a $\pi_{\Q}$-separable map such that
  $Y$ is a homogeneous space $G/H$ with $\rank\ G=\rank\ H$.
Then any group action on  $Y$ is rationally lifted to $X$. 
In particular,  the natural $G$-action on $Y$ is rationally lifted to $X$. 
\end{cor}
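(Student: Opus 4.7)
The plan is to derive the corollary by combining the Shiga-Tezuka theorem with Theorem \ref{two} to obtain a section of $a_f$, and then feeding that section into the Gottlieb-type rigidification already used in the proof of Theorem \ref{lift}.

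First, I would recall the classical fact that an equal-rank homogeneous space $Y=G/H$ of compact connected Lie groups is an $F_0$-space: by Borel's theorem the rational cohomology $H^*(G/H;\Q)$ is a polynomial algebra modulo a regular sequence in even-degree generators. Then Shiga-Tezuka \cite{ST} applies to give that $Y$ satisfies the Halperin conjecture.

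Second, I would apply Theorem \ref{two} to this $Y$. Since $Y$ is an $F_0$-space satisfying Halperin's conjecture, the homotopy fibration
\[
\chi_f\colon F_{a_f}\longrightarrow (Baut_1X)_0\xrightarrow{a_f}(Baut_1Y)_0
\]
is fibre-trivial for every $\pi_{\Q}$-separable map $f:X\to Y$. In particular $a_f$ admits a section $s:(Baut_1Y)_0\to (Baut_1X)_0$.

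Third, for any compact Lie group $K$ acting on $Y$, the action is classified by a map $h_K\colon BK\to Baut_1Y$. Rationalizing and precomposing with $s$ yields the lift $\widetilde{h}_K:=s\circ (h_K)_0\colon (BK)_0\to (Baut_1X)_0$ with $a_f\circ \widetilde{h}_K\simeq (h_K)_0$. Plugging this square into Theorem \ref{xy} (with $B_0=B_0'=(BK)_0$ and $g=\mathrm{id}$) produces a fibre-preserving map from an $X_0$-fibration over $(BK)_0$ to the $Y_0$-fibration associated with the original $K$-action; the Gottlieb-type argument (Theorem \ref{Gott}, \cite{Gol}) invoked at the end of the proof of Theorem \ref{lift} then rigidifies this into a rational $K$-equivariant map $f':X'\to Y$ fibre-homotopy equivalent to $f_0$. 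Specializing to $K=G$ with its natural action on $G/H$ yields the concluding sentence. The only real obstacle is the verification aspect — confirming that $Y=G/H$ is genuinely $F_0$ and that Shiga-Tezuka applies — since the existence of the section $s$ simultaneously annihilates all cellular obstructions of Theorem \ref{obstr}, bypassing any separate need to check $\pi_{\rm odd}(Baut_1f)_{\Q}=0$ as in Theorem \ref{lift}.
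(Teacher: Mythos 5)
Your proposal is correct and follows exactly the route the paper intends: equal-rank $G/H$ is an $F_0$-space satisfying Halperin's conjecture by Shiga--Tezuka, Theorem \ref{two} then gives fibre-triviality of $\chi_f$ and hence a section of $a_f$, and composing the rationalized classifying map of the Borel fibration with that section feeds into Theorem \ref{xy} and Gottlieb's Theorem \ref{Gott} as in the proof of Theorem \ref{lift}. Your closing observation that the section makes the obstruction classes (and the hypothesis $\pi_{\mathrm{odd}}(Baut_1f)_{\Q}=0$, together with the finiteness of $F_f$) unnecessary is precisely the remark the paper makes in Section 4, namely that one may take $\tilde h=s\circ h$ whenever $a_f$ admits a section $s$.
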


Furthermore  we apply the obstruction argument to a rational homotopical invariant: 
Let $ r_0(X)$ be the {\it rational  toral rank}
 of a simply connected  complex  $X$
of $\dim H^*(X;\Q )<\infty $,
 i.e., the largest integer $r$ such that an $r$-torus
 $T^r=S^1 \times\dots\times S^1$($r$-factors)  can act continuously
 on a CW-complex $X'$ in  the rational homotopy type of $X$
 with all its isotropy subgroups finite (almost free action)   \cite{AP}, \cite{FOT}, \cite{H}.
It is very difficult to calculate  
$r_0(\ \ )$ in general.
From the definition, we have the inequality $r_0(X\times Y)\geq r_0(X)+r_0(Y)$.
Notice that it may  sometimes be a strict inequality
since there is an example that $r_0(X\times S^{12})>0$
even though $r_0(X)=r_0(S^{12})=0$ \cite[Example 3.3]{JL}.
For a map $f:X\to Y$, we see  $r_0(Y)\leq r_0(X)$ when 
$X= F\times Y$ for any space $F$ and $f$ is the projection $F\times Y\to Y$.
In general, 
 when does a map $f:X\to Y$ induce  $r_0(Y)\leq r_0(X)$ ?

\begin{cor}\label{coro} Let $f:X\to Y$  
be a $\pi_{\Q}$-separable map 
with  $Y$ and $F_f$  finite.
If  $\pi_{odd}(Baut_1f)_{\Q}=0$, we have
$r_0(Y)\leq r_0(X)$. 
\end{cor}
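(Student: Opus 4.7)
The plan is to lift an almost free torus action on a rational model of $Y$ realizing $r_0(Y)$ to an action on a rational model of $X$ via Theorem \ref{lift}, and then verify that almost freeness is automatically preserved along the equivariant map.

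Set $r=r_0(Y)$. By the definition of the rational toral rank there exists a CW-complex $Y'$ in the rational homotopy type of $Y$ carrying an almost free action of the compact connected Lie group $T^r$. Because both $\pi_{\Q}$-separability and the rational homotopy type of $Baut_1f$ are detected by the relative minimal model $M(Y)\to M(X)$, and hence depend only on the rational fibre homotopy class of $f$, I replace $f$ by a $\pi_{\Q}$-separable map $f':X'\to Y'$ with $X'\simeq_{\Q}X$ and $F_{f'}\simeq_{\Q}F_f$; in particular $\pi_{odd}(Baut_1f')_{\Q}=0$ by hypothesis.

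Now apply Theorem \ref{lift} to $f':X'\to Y'$ with the $T^r$-action on $Y'$. The vanishing condition provides a $T^r$-space $X''$ and a $T^r$-equivariant map $f'':X''\to Y'$ that is rationally fibre homotopy equivalent to $f'$, so that $X''$ still lies in the rational homotopy type of $X$.

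It remains to check that the lifted $T^r$-action on $X''$ is almost free. For $x\in X''$, any $g\in T^r$ with $gx=x$ satisfies $g\cdot f''(x)=f''(gx)=f''(x)$ by equivariance, so the isotropy subgroup $T^r_x$ embeds into the isotropy subgroup $T^r_{f''(x)}$. The latter is finite because the $T^r$-action on $Y'$ is almost free, hence so is $T^r_x$. Therefore $X''$ is a CW-complex in the rational homotopy type of $X$ admitting an almost free $T^r$-action, which yields $r_0(X)\geq r=r_0(Y)$.

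The main obstacle is the first step: transferring from $f:X\to Y$ to a $\pi_{\Q}$-separable map $f':X'\to Y'$ with the same rational invariants while arranging that $T^r$ acts on the target. This is essentially formal, since $\pi_{\Q}$-separability, the finiteness hypotheses entering Theorem \ref{lift}, and the vanishing $\pi_{odd}(Baut_1f)_{\Q}=0$ are all encoded in the Sullivan model of the extension $M(Y)\to M(X)$, but it deserves an explicit statement before Theorem \ref{lift} is invoked. Once the lift is in hand, the almost-freeness comparison via $f''$ is immediate.
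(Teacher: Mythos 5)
Your strategy --- lift the action geometrically via Theorem \ref{lift} and then observe that isotropy subgroups inject along the equivariant map --- is genuinely different from the paper's. The paper never produces an honest group action on a space rationally equivalent to $X$: it lifts the classifying map $(BT^r)_0\to(Baut_1Y)_0$ of the Borel fibration of the $Y'$-action through $a_f$ (Theorem \ref{obstr}), builds a total space $\tilde E$ over $(BT^r)_0$ with fibre $X_0$ (Theorem \ref{xy}), checks $\dim H^*(\tilde E;\Q)<\infty$ using the fibration $F_g\to\tilde E\to E\simeq(ET^r\times_{T^r}Y')_0$, and concludes by the algebraic criterion of Proposition \ref{ha}. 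Your isotropy computation is correct as far as it goes, but the argument has a genuine gap at the last step.

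The gap: ``$X''$ is a CW-complex in the rational homotopy type of $X$ admitting an almost free $T^r$-action'' does not by itself yield $r_0(X)\geq r$. The definition of $r_0$ must carry a finiteness condition on the acting complex (as in Proposition \ref{ha}, where the almost free action is realized on a \emph{finite} complex); otherwise $T^r$ acts freely on $ET^r\times X\simeq X$ for every $X$ and every $r$, and $r_0$ would be identically infinite. The space $X''$ produced by Theorem \ref{lift} is not finite: its construction pulls back a fibration whose fibre is $(F_f)_0$, a rational and hence infinite-dimensional space, so $X''$ is only rationally equivalent to $X$ and is far from finite-dimensional. To rescue the argument you must show that $\dim H^*(ET^r\times_{T^r}X'';\Q)<\infty$ --- which does follow from the fibration $ET^r\times_{T^r}X''\to ET^r\times_{T^r}Y'$ with fibre $F_{f''}$ of finite-dimensional cohomology, together with almost-freeness on the finite complex $Y'$ --- and then invoke Proposition \ref{ha} for the resulting KS extension of $M(X)$. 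At that point the isotropy-injection observation is no longer doing any work and you have essentially reproduced the paper's proof. A secondary issue of the same flavour infects your transfer step: the fibration $f':X'\to Y'$ over the finite model $Y'$ that one can actually realize has fibre $(F_f)_0$ rather than a finite complex, so the hypothesis ``$F_{f'}$ finite'' of Theorem \ref{lift} fails literally; one must note that the obstruction groups of Theorem \ref{obs} depend only on the derivation complex $Der(\Lambda W,\Lambda V\otimes\Lambda W)$, which is how the paper avoids transferring $f$ at all.
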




\section{Sullivan models, derivations and Quillen models}
Let
$M(X)=(\Lambda {V},d)$
be the  Sullivan minimal model of simply connected CW complex $X$ of finite type \cite{S}.
  It is a free $\Q$-commutative {\bf d}ifferential {\bf g}raded {\bf a}lgebra (DGA)
 with a $\Q$-graded vector space $V=\bigoplus_{i\geq 1}V^i$
 where $\dim V^i<\infty$ and a decomposable differential,
 i.e., $d(V^i) \subset (\Lambda^+{V} \cdot \Lambda^+{V})^{i+1}$ and $d \circ d=0$.
 Here  $\Lambda^+{V}$ is
 the ideal of $\Lambda{V}$ generated by elements of positive degree.
The degree of a homogeneous element $x$ of a graded algebra is denoted as $|{x}|$.
Then  $xy=(-1)^{|{x}||{y}|}yx$ and $d(xy)=d(x)y+(-1)^{|{x}|}xd(y)$.
Note that  $M(X)$ determines the rational homotopy type of $X$, namely 
the spatial realization is given as $||M(X)||\simeq X_0$.
In particular,  $$V^n\cong {\rm Hom}(\pi_n(X),\Q) \mbox{\ \ and\ \ }H^*(\Lambda {V},d)\cong H^*(X;\Q ).$$
Here the second is an isomorphism as graded algebras. 
Refer to \cite{FHT} for detail.


Let $Der_i M(X)$ be the set of $\Q$-derivations of $M(X)$
decreasing the degree by $i$
with
$\sigma (xy)=\sigma (x)y+(-1)^{i\cdot |x|}x\sigma (y)$
for $x,y\in M(X)$. 
The boundary operator $\partial : Der_i M(X)\to Der_{i-1} M(X)$
is defined by $$\partial  (\sigma)=d\circ \sigma-(-1)^i\sigma \circ d$$
for $\sigma\in Der_iM(X)$.  
We denote  $\oplus_{i>0} Der_iM(X)$ by
$DerM(X)$
in which $Der_1M(X)$ is $\partial$-cycles.
Then ${Der}M(X)$ is a (non-free) DGL by the Lie bracket $$[\sigma ,\tau]:=\sigma\circ \tau-(-1)^{|\sigma||\tau|}
\tau\circ\sigma.$$
Note that $H_*(DerM)=H_*(Der N)$ when free  DGAs $M$ and $N$ are quasi-isomorphic \cite{Sa}.
 Furthermore, recall the definition  of D.Tanr\'{e} \cite[p.25]{T}: 
Let  $(L,\partial )$ be a DGL of finite type with positive degree. Then 
$C^*(L,\partial )=(\Lambda s^{-1}\sharp L, D=d_1+d_2) $ with
$$ \langle d_1s^{-1}z; sx\rangle =-\langle z;\partial x\rangle \mbox{ \ and\ } 
 \langle d_2s^{-1}z; sx_1,sx_2\rangle =(-1)^{|x_1|}\langle z;[x_1,x_2]\rangle,$$
where $\langle s^{-1}z;sx\rangle=(-1)^{|z|}\langle z;x\rangle$ 
and $\sharp L$ is the dual space of $L$.

\begin{thm}\cite[\S 11]{S},\cite{T},\cite{G} 
For a Sullivan model $M(X)$ of $X$,
$Der M(X) $ is a DGL-model of $Baut_1X$.
In particular, there is an isomorphism of graded Lie algebras
$H_*(Der M(X))\cong \pi_*(\Omega Baut_1X)_{\Q}$
where the right-hand has the Samelson bracket. 
Furthermore 
$C^*(Der M(X) )$ is a DGA-model of $Baut_1X$. 
\end{thm}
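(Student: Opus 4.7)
The plan is to prove the theorem in three interlocking stages: first identify rational homotopy groups, then install the Lie bracket, and finally pass from the Quillen side to the Sullivan side via the cochain functor $C^*$.

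Stage one is to compute $\pi_*(aut_1 X)_{\Q}$ using derivations. Since $\Omega Baut_1 X \simeq aut_1 X$, it suffices to produce a Sullivan/Quillen description of the identity component of $\mathrm{map}(X,X)$. I would follow the Haefliger--Sullivan--Schlessinger--Stasheff description of function space components: for the relative Sullivan model of $(\Lambda V,d)\otimes (\Lambda V,d)^\vee$ one isolates the tangent complex at the identity and identifies it, in the simply connected finite-type setting, with $Der M(X)$ equipped with the derivation differential $\partial(\sigma)=d\sigma-(-1)^{|\sigma|}\sigma d$. A standard deformation/obstruction argument (or the Block--Lazarev / Berglund account) then yields the isomorphism $\pi_n(aut_1 X)_{\Q}\cong H_n(Der M(X),\partial)$ for $n\ge 1$, which is the graded-vector-space content of the theorem.

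Stage two is to match Lie brackets. On the topological side, under the equivalence $\Omega Baut_1 X\simeq aut_1 X$, the Samelson bracket on $\pi_*(\Omega Baut_1 X)_{\Q}$ corresponds to the Samelson bracket of the group-like monoid $aut_1 X$, which is induced by the commutator map $(f,g)\mapsto fgf^{-1}g^{-1}$. On the algebraic side, the commutator $[\sigma,\tau]=\sigma\tau-(-1)^{|\sigma||\tau|}\tau\sigma$ of derivations passes to homology as a graded Lie bracket. I would check compatibility by tracking an explicit representing cycle: a class in $H_n(Der M(X))$ comes from a derivation $\sigma$ of degree $n$, and under the Sullivan realization the infinitesimal action of $\sigma$ on $M(X)$ corresponds to the derivative of a one-parameter family of self-equivalences. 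The commutator of two such families computes the Samelson bracket, matching $[\sigma,\tau]$ up to the standard sign convention. This step is where I anticipate the main technical obstacle, since making the sign-accurate identification between the algebraic commutator of derivations and the topological Samelson bracket requires careful bookkeeping of the desuspension that appears in passing from $aut_1 X$ to $\Omega Baut_1 X$, and one must confirm the resulting graded Lie algebra structures agree rather than differing by an inner automorphism or a sign twist.

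Stage three invokes Quillen's equivalence between the rational homotopy category of simply connected spaces of finite type and the homotopy category of reduced DGLs over $\Q$. Having established an isomorphism of graded Lie algebras $H_*(Der M(X))\cong \pi_*(\Omega Baut_1 X)_{\Q}$ with the Samelson bracket, and noting that $Baut_1 X$ is simply connected under our hypotheses, Quillen's theorem shows that any DGL with these invariants and the right higher structure is a DGL-model. The combination of Stage one (producing the correct homotopy groups) and Stage two (producing the correct Lie bracket), plus minimality/formality bookkeeping, furnishes the required DGL-model statement $Der M(X)\simeq L(Baut_1 X)$.

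Finally, to obtain the Sullivan model statement, I would apply Tanré's cochain construction $C^*(L,\partial)=(\Lambda s^{-1}\sharp L, d_1+d_2)$ recalled above, which is the Chevalley--Eilenberg-style functor sending a DGL-model of a simply connected finite-type space to a Sullivan model of that space. Applying $C^*$ to the DGL-model $Der M(X)$ of $Baut_1 X$ then yields a DGA-model of $Baut_1 X$, completing the theorem. The only subtlety here is verifying that $Der M(X)$ has positive-degree finite-type, which follows from the standing hypothesis $\dim\pi_*(X)_{\Q}<\infty$ and from simple connectivity of $Baut_1 X$.
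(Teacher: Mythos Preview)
The paper does not prove this theorem at all: it is stated with citations to Sullivan~\cite[\S 11]{S}, Tanr\'e~\cite{T}, and Gatsinzi~\cite{G} and is used throughout as a known foundational result. So there is no ``paper's own proof'' to compare your sketch against; in the cited sources the argument proceeds by directly constructing a model for the universal $X$-fibration (equivalently, by identifying the deformation theory of the Sullivan model with the classifying space), not by first computing invariants and then recognizing the space.

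That said, your sketch contains a genuine gap in Stage three. Knowing the isomorphism of graded Lie algebras $H_*(Der\,M(X))\cong \pi_*(\Omega Baut_1X)_{\Q}$ is strictly weaker than knowing that $Der\,M(X)$ is a DGL-model of $Baut_1X$: two DGLs can have isomorphic homology Lie algebras without being quasi-isomorphic, since the rational homotopy type is governed by the full $L_\infty$-structure (equivalently, the Quillen differential), not merely by the homotopy Lie algebra. Your phrase ``the right higher structure \dots\ plus minimality/formality bookkeeping'' is exactly where the content lies, and nothing in Stages one and two supplies it. The literature avoids this by building an explicit zig-zag: Sullivan (and Schlessinger--Stasheff) relate $Der\,M(X)$ to the classifying object via the Maurer--Cartan functor, showing that $C^*(Der\,M(X))$ carries a universal relative Sullivan algebra with fibre $M(X)$, hence models $Baut_1X$; Tanr\'e's $C^*$ then translates this into the DGL statement. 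If you want a self-contained argument, you should replace Stage three by that construction rather than an appeal to invariants.
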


Let 
$L(X)= (\LL U,\partial )$ be the Quillen model of $X$   \cite[III.3.]{T}, \cite[\S 24]{FHT}.
  It is a free $\Q$-commutative {\bf d}ifferential {\bf g}raded {\bf L}ie algebra (DGL)
 with a $\Q$-graded vector space $U=\bigoplus_{i\geq 1}U_i$
 where $\dim U_i<\infty$ and  $\partial (U_i) \subset (\LL U)_{i-1}$, which is the space of elements of $\LL U$
with  degree $i-1$.
Note that $[x,y]=-(-1)^{|x||y|}[y,x]$ and Jacobi identity:
$$[x,[y,z]]=[[x,y],z]+(-1)^{|x||y|}[y,[x,z]]$$
for $x,y,z\in \LL U$ and Leibniz rule: 
$$ \partial [x,y]=[\partial x,y]+(-1)^{|x|}[x,\partial y].$$
Note that  $L(X)$ determines the rational homotopy type of $X$, namely $||L(X)||\simeq X_0$.
In particular,  
there are isomorphisms
$$ \tilde{H_n}(X;\Q )\cong H_{n-1}(U,\partial_1) \mbox{\ \ \ and\ \ } \pi_*(\Omega X)_{\Q} \cong H_{*}(\LL U,\partial ),$$
where $\partial_1:U\to U$ is the linear part of $\partial$. Here the second is an isomorphism as graded Lie algebras. 
Refer to \cite{FHT} for detail.\\



\noindent
{\it Proof of Lemma \ref{equi}}.
Recall the DGA-diagram of \S 1.
Then  $D_2=\psi\circ D_1\circ \psi^{-1}$.
Therefore there is a DGL-isomorphism  $\phi$  given by $\phi (\sigma )=\psi \circ \sigma \circ \psi^{-1}$
and 
$$\xymatrix{ Der  (\Lambda V\otimes \Lambda W,D_1),\partial_1\ar[r]^{\ \ \  \ \ \ \ b_{f_1}}\ar[d]_{\phi}& Der  (\Lambda V)\ar@{=}[d]\\
Der  (\Lambda V\otimes \Lambda W,D_2),\partial_2\ar[r]^{\ \ \ \ \ \ \ b_{f_2}}& Der  (\Lambda V)}$$
is DGL-commutative
since $\psi |_{\Lambda V} =id_{\Lambda V}$.
In particular, we can check 
$\partial_2\circ \phi=\phi \circ \partial_1$
by $$\partial_2 \phi (\sigma)=D_2\psi\sigma\psi^{-1}-(-1)^i(\psi\sigma\psi^{-1})D_2=\psi D_1\sigma\psi^{-1}-(-1)^i\psi\sigma D_1\psi^{-1}
$$
$$=\phi (D_1\sigma-(-1)^i\sigma D_1)=\phi  \partial_1(\sigma)$$
for $\sigma\in Der_i  (\Lambda V\otimes \Lambda W,D_1)$.
Similarly we have   $\phi ([\sigma,\tau ])=[\phi (\sigma ),\phi (\tau)]$. 
\hfill\qed\\

\noindent
{\bf Convention.} 
For a DGA-model $(\Lambda V,d)$ the symbol $(v,f)$ means the {\it elementary derivation} that takes a generator $v$ of $V$ to an  element $f$ of $\Lambda V$
and the other generators to $0$.
Note that $|(v,f)|=|v|-|f|$.\\

\noindent
{\it Proof of Proposition \ref{sep}}. Let  $M(Y)=(\Lambda V,d)\to (\Lambda V\otimes \Lambda W,D)$ be the model of $f$.\\
(if) When $\min  W\geq \max  V$,
there is a decomposition of vector spaces$$ Der  (\Lambda V\otimes \Lambda W)=Der  (\Lambda V)\oplus 
Der  ( \Lambda W, \Lambda V\otimes \Lambda W)$$
from degree arguments.
Then  there is a DGL-map $b_f:Der  (\Lambda V\otimes \Lambda W,D)\to Der(\Lambda V,d)  $
by  $b_f(\sigma_1)=\sigma_1$ and $b_f (\sigma_2 )=0$ for $\sigma=\sigma_1+\sigma_2$ with   $\sigma_1\in Der  (\Lambda V)$
and $\sigma_2\in Der  ( \Lambda W, \Lambda V\otimes \Lambda W)$.
In particular, 
it preserves the differential since $b_f(\tau (\sigma_1))=0$
when 
 $\partial_X(\sigma_1)=\partial_Y (\sigma_1)+\tau (\sigma_1)$
for $\tau (\sigma_1)
\in Der  ( \Lambda W, \Lambda V\otimes \Lambda W)$.\\
(only if) Suppose that  $\min W< \max V$.
There are elements $w\in W$ and $v\in V$
with $|w|<|v|$.
Then $b_f$ is not a DGL-map since
$$0\neq b_f(v,1)=b_f([(w,1),(v,w)])=[b_f(w,1),b_f(v,w)]=[0,0]=0$$
from the definition of $b_f$.
\hfill\qed\\

\begin{exmp}
Let $f:S^{2n}\to K(\Z, 2n)$ be the natural inclusion.
Then the homotopy fibre is $S^{4n-1}$ and therefore $f$ is  $\pi_{\Q}$-separable.
Thus   $b_f :Der (\Lambda (x,y),D)\to Der (\Lambda (x),0)$
with $|x|=2n$, $|y|=4n-1$, $Dx=0$ and $Dy=x^2$ is given by
$b_f((x,1))=(x,1)$ and $b_f((y,1))=b_f((y,x))=0$.
Refer Theorem \ref{one}.
\end{exmp}

\begin{exmp}
Consider the case that $f$ is not $\pi_{\Q}$-separable (not $\Q$-w.t.).
Let 
$f:S^7\to S^4$
be the Hopf map. Then the model  is given by
$$M(S^4)=(\Lambda (x,y),d)\to (\Lambda (x,y,z),D)\simeq M(S^7)$$
with $|x|=4$, $|y|=7$, $|z|=3$, $dx=Dx=0$, $Dy=dy=x^2$, $Dz=x$, $Dy=x^2$.
Then the bases of derivations are given as \\
 
\begin{center}
\begin{tabular}{l|c}

\multicolumn{1}{c|}{$n$} &
\multicolumn{1}{c}{$Der_n (\Lambda (x,y,z),D)$} \\

\hline
$7$ & $(y,1)$\\
\hline
$4$ &  $(x,1)$ $(y,z)$\\
\hline
$3$ &  $(y,x)$ $(z,1)$\\
\hline
$1$ &  $(x,z)$\\
\hline
\end{tabular}
\ \ \ \ \ \ \ \ \ 
\begin{tabular}{l|c}

\multicolumn{1}{c|}{$n$} &
\multicolumn{1}{c}{$Der_n (\Lambda (x,y),d)$} \\

\hline
$7$ & $(y,1)$\\
\hline
$4$ &  $(x,1)$\\
\hline
$3$ &  $(y,x)$\\
\hline
$1$ &  \\
\hline 
\end{tabular}
\end{center}
, where $H_*(Der  (\Lambda (x,y))=\Q \{ (y,1)\}$.
By degree reason, any  DGL-map
$$\psi :(Der (\Lambda (x,y,z),D)\to (Der (\Lambda (x,y),d)$$
is given by 
$\psi (y,1)=a_1(y,1)$,   
$\psi (x,1)=a_2(x,1)$, 
$\psi (y,z)=a_3(x,1)$, 
$\psi (y,x)=a_4(y,x)$, 
$\psi (z,1)=a_5(y,x)$ 
and $\psi_f (x,z)=0$
for some $a_i\in\Q$.

From $(x,1)=[(z,1),(x,z)]$ and $(y,z)=[(x,z),(y,x)]$
we have $a_2=0$
and  $a_3=0$, respectively.
Then from $2(y,1)=[(z,1),(y,z)]+[(x,1),(y,x)]$, we obtain $a_1=0$.
 Thus 
 $||\psi ||$ is homotopic to the constant map.
\end{exmp}

\noindent
{\it Proof of Theorem  \ref{section}}.
The map
$\pi_n(\psi_f):\pi_n(aut_1X)\to \pi_n(aut_1Y)$
 is given by  $\pi_n(\psi_f)([\sigma ])=[\tau]:=[f\circ \sigma\circ (s\times 1_{S^n})]$
  in the following homotopy commutative diagram:
{\small $${\small \xymatrix{
X\times S^n\ar[dd]_{\sigma  }&&Y\times S^n\ar[ll]_{s\times id_{S^n}}\ar@{.>}[dd]^(.3){\tau}&\\
&X\ar[lu]^{i_X}\ar[dl]_g &&Y\ar[ll]^{s\ \ \ \ \ \ \ \ }\ar@{=}[dl]\ar[lu]_{i_Y}\\
X\ar[rr]_f&&Y&
}}$$} 
from adjointness.
That is 
the pointed homotopy classes of maps $S^n\to aut_1X=map(X,X;id_X)$ are in bijection with the homotopy 
classes of those maps $X\times  S^n\to X$  that composed with the inclusion $i_X:X\to X\times S^n$ yield the identity \cite[p.43-44]{Sa}. 
Let  $M(Y)=(\Lambda V,d)\to (\Lambda V\otimes \Lambda W,D)$ be the model of $f$.
There is a chain map $$c_f:Der  (\Lambda V\otimes \Lambda W,D)\to Der(\Lambda V,d)  $$ 
given by $c_f (\sigma )={\rm proj}_V\circ \sigma$.
It is well-defined, i.e., 
$\partial_Y\circ c_f=c_f\circ \partial_X$,  from  $DW\subset \Lambda V\otimes \Lambda^+W$ \cite{Th} since   it  admits  a section.
Notice that  $\pi_n(\psi_f)_{\Q}([\sigma_0 ])=H_n(c_f)([\sigma ])=[\tau]$ by identifying the maps $\sigma$ and $\tau$ 
to their induced derivations of $\Lambda V\otimes \Lambda W$ and $\Lambda V$, respectively \cite[p.313]{S}(cf.\cite[Proposition 11]{Sa}).
(However $c_f$ is not a DGL-map in general.)
If $f$ is a $\pi_{\Q}$-separable map,
$c_f=b_f$ as DGL-maps and then $\pi_n(\Omega a_f)$  is identified with 
$$\pi_n(\psi_f)_{\Q}:\pi_n(aut_1X)_{\Q}\cong \pi_n(\Omega Baut_1X)_{\Q}
\overset{H_n(b_f)}{\longrightarrow} \pi_n(\Omega Baut_1Y)_{\Q} \cong \pi_n(aut_1Y)_{\Q}$$
for any $n$.
Thus
we obtain  $\Omega a_f\simeq (\psi_f)_0$. 
\hfill\qed\\


The following is obvious from the definition of $b_f$ and useful:
\begin{claim}\label{rea}
For any $\pi_{\Q}$-separable  map $f:X\to Y$, we have
$b_f(C)=0$ and $b_f\mid_{Der(\Lambda V)}=id_{Der(\Lambda V)}$ for $Der  (\Lambda V\otimes \Lambda W)=C\oplus Der(\Lambda V)$.
\end{claim}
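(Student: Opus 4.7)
The plan is to unpack the definition of $b_f$ in light of the decomposition established in the proof of Proposition \ref{sep}, so that both equalities fall out of one-line calculations on generators. The statement is essentially the observation that $b_f$ is nothing other than the projection onto the $Der(\Lambda V)$-summand of the splitting $Der(\Lambda V \otimes \Lambda W) = C \oplus Der(\Lambda V)$, so the work consists in pinning down the summand $C$ and then reading off the formula ${\rm proj}_V \circ \sigma$ on each factor.

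First I would recall the degree argument from Proposition \ref{sep}: under the $\pi_{\Q}$-separability hypothesis $\min W \geq \max V$, a derivation $\sigma$ of positive degree $i \geq 1$ must send each generator $v \in V$ into $\Lambda V$, because any element of $\Lambda V \otimes \Lambda W$ of degree $|v|-i \leq \max V - 1 < \min W$ cannot involve a generator of $W$. Consequently every $\sigma$ splits uniquely as $\sigma = \sigma_1 + \sigma_2$, where $\sigma_1$ vanishes on $W$ and sends $V$ into $\Lambda V$ (so $\sigma_1$ is naturally identified with an element of $Der(\Lambda V)$) and $\sigma_2$ vanishes on $V$ and sends $W$ into $\Lambda V \otimes \Lambda W$. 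This pins down $C = Der(\Lambda W, \Lambda V \otimes \Lambda W)$, matching the notation already used in the paper.

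Next I would evaluate $b_f(\sigma) = {\rm proj}_V \circ \sigma$ on generators $v \in V$, which suffices since the output is a derivation of $\Lambda V$ and hence determined by its values on $V$. For $\sigma \in C$ we have $\sigma(v) = 0$, so ${\rm proj}_V(\sigma(v)) = 0$ and therefore $b_f(C) = 0$. For $\sigma \in Der(\Lambda V)$ we have $\sigma(v) \in \Lambda V$, so ${\rm proj}_V(\sigma(v)) = \sigma(v)$, giving $b_f|_{Der(\Lambda V)} = id_{Der(\Lambda V)}$. There is no real obstacle to the argument; the only subtlety worth a sanity check is that the identification of the $Der(\Lambda V)$-summand inside $Der(\Lambda V \otimes \Lambda W)$ with genuine derivations of $\Lambda V$ is compatible with the respective differentials, but this follows at once from $D|_{\Lambda V} = d$. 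The content of the claim is thus organizational, and its utility is that it exhibits $b_f$ as a split surjection of graded vector spaces, which is exactly what is needed in the subsequent section to discuss when the splitting can be upgraded to a DGL section.
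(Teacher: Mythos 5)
Your proposal is correct and matches the paper's (implicit) reasoning: the paper dismisses this claim as "obvious from the definition of $b_f$," since in the proof of Proposition \ref{sep} the map $b_f$ is literally defined as the projection onto the $Der(\Lambda V)$-summand of the degree-forced splitting $Der(\Lambda V\otimes\Lambda W)=Der(\Lambda V)\oplus Der(\Lambda W,\Lambda V\otimes\Lambda W)$. Your write-up simply spells out that degree argument and checks consistency with the formula $b_f(\sigma)={\rm proj}_V\circ\sigma$ from Definition 1.1, which is exactly the intended content.
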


\noindent
{\it Proof of Theorem \ref{xy}}.
Let $X\to E_{\infty}^X\overset{p_{\infty}^X}{\to} Baut_1X$ and $Y\to E_{\infty}^Y\overset{p_{\infty}^Y}{\to}  Baut_1Y$ be the universal fibrations of $X$ and $Y$, respectively.
Let  $C^*(Der (\Lambda V))\otimes \Lambda V,D_Y$ be the DGA-model of $E_{\infty}^Y$
and $C^*(Der (\Lambda V\otimes \Lambda W)) \otimes \Lambda V\otimes \Lambda W,D_X$ be the DGA-model of $E_{\infty}^X$.
For 
a $\pi_{\Q}$-separable map $f:X\to Y$,
there exists a DGA-inclusion map $\psi$ such that 
$${\small  
\xymatrix{ C^*(Der (\Lambda V))\ar[d]_{C^*(b_f)}\ar[r]^{}&
  C^*(Der (\Lambda V))\otimes \Lambda V,D_Y\ar@{.>}[d]_{{\psi }}\ar[r]^{}& \Lambda V,d\ar[d]^{i}\\
C^*(Der (\Lambda V\otimes \Lambda W))\ar[r]^{}& 
{ C^*(Der (\Lambda V\otimes \Lambda W) \otimes \Lambda V\otimes \Lambda W,D_X}\ar[r]^{} &  \Lambda V\otimes \Lambda W,D
\\
}}$$
is commutative from the universality.
Indeed,  
$C^*(Der (\Lambda V))\otimes \Lambda V,D_Y$ is a sub-DGA of 
$C^*(Der (\Lambda V\otimes \Lambda W)) \otimes \Lambda V\otimes \Lambda W,D_X$ from
 Claim \ref{rea} and  $\max V\leq \min W$.
Thus there is a map $\tilde{a}_f:=|\psi |:  (E_{\infty}^X)_0 \to (E_{\infty}^Y)_0$
such that $(p_{\infty}^Y)_0\circ \tilde{a}_f=a_f\circ (p_{\infty}^X)_0$.
Since  
$p'$ is the pull-back of  $(p_{\infty}^Y)_0$ by $h'$,
there exists a map $k:E\to E'$ such that
$$ {\small
\xymatrix
{ E \ar[dd]_{p} \ar@{.>}[rd]^{k } \ar[rr]^{\tilde{h}} && (E_{\infty}^X)_0 \ar'[d][dd]^(.3){(p_{\infty}^X)_0} \ar[rd]^{\tilde{a}_f} \\
&E' \ar[dd]_(.3){p'} \ar[rr] ^(.3){ \tilde{h'} }  &&  (E_{\infty}^Y)_0 \ar[dd]^{(p_{\infty}^Y)_0} \\
B_0 \ar'[r] [rr]_(.3){h}    \ar[rd]_{g} && (Baut_1X)_0  \ar[rd]^{a_f} \\
& B'_0 \ar[rr]^{h' }  &&  (Baut_1Y)_0
}}
$$
is commutative from the universality
since $h'\circ g\circ p=a_f\circ h\circ p=a_f\circ (p_{\infty}^X)_0\circ \tilde{h}=(p_{\infty}^Y)_0\circ \tilde{a}_f\circ \tilde{h}$.
\hfill\qed\\


\begin{exmp}\label{real}
Let $X=K(\Q ,n)\times K(\Q, 2n)$
and $Y=K(\Q, n)$ for some even integer $n$.
Then $M(X)=\Lambda (x,y),0$ and $M(Y)=\Lambda (z),0$ with $|x|=|z|=n$ and $|y|=2n$.
Let a map $f:X\to Y$ be given by  $M(f):\Lambda (z)\to \Lambda (x,y)$ with $M(f)(z)=x$.
The homotopy fibration of any  $\pi_{\Q}$-separable map 
 is given by $\Lambda (z),0\to \Lambda (z,y),0\cong \Lambda (x,y),0$ 
from the degree reason.
Therefore the DGL-map
$\psi :Der \Lambda (x,y)\to Der \Lambda (z)$
such that $\psi ((y,x))=\psi((x,1))=(z,1)$ is not DGL-homotopic to $b_f$ from Claim \ref{rea}.

Let $h:S^{n+1}_0\to  (Baut_1X)_0$ and  $h':S^{n+1}_0\to  (Baut_1Y)_0$
be  given by $L(h): \LL (u)\to Der (\Lambda (x,y))$ with $|u|=n$, $L(h)(u)=(y,x)$
and $L(h'): \LL (u)\to Der (\Lambda (z))$ with $L(h')(u)=(z,1)$, respectively.
Then the commutative diagram 
 $${\small  
\xymatrix{ 
S^{n+1}_0\ar[d]_{{=}}\ar[r]^{h\ \ }& (Baut_1X)_0\ar[d]^{||\psi ||}\\
{ S^{n+1}_0}\ar[r]^{h'\ \ } & (Baut_1Y)_0
\\
}}$$
does not induce   
a map between  total spaces $f':E\to E'$ such that 
$${\small  
\xymatrix{ X_0\ar[d]_{f_0}\ar[r]^{}&
 E\ar@{.>}[d]_{{ f'}}\ar[r]^{p}& S^{n+1}_0\ar[d]^{=}\\
Y_0\ar[r]^{}& 
{ E'}\ar[r]^{p'} & S^{n+1}_0
\\
}}$$
is homotopy commutative.
Indeed,  there does not exist 
a DGA-map  $h:\Lambda (v,z),D'\to \Lambda (v,x,y),D$ 
with $D'z=v$, $Dy=vx$ and $Dx=0$ such that 
$${\small  
\xymatrix{ \Lambda v,0\ar[d]_{=}\ar[r]^{}&
 \Lambda (v,z),D'\ar@{.>}[d]_{{ h}}\ar[r]^{p}& \Lambda (z),0\ar[d]^{M(f)}\\
\Lambda v,0\ar[r]^{}& 
{ \Lambda (v,x,y),D}\ar[r]^{p'} &  \Lambda (x,y),0
\\
}}$$
where $|v|=n+1$
is homotopy commutative
since  $h$ can  not be a DGA-map from  $Dh(z)=0$ but $hD'(z)=v$. 
\end{exmp}

\begin{rem} Recall that a map $f:X\to Y$ is said to be a {\it Gottlieb map} \cite{Y}
if its homotopy group map  induces
the map between their Gottlieb groups \cite{Gott} $f_{\sharp}:G_*(X)\to G_*(Y)$.
For example, if a map $f$ admits a section, it is a Gottlieb map. 
Since elements of the rational Gottlieb group $G_*(X)_{\Q}$ is described by certain  derivations of $M(X)$ \cite{FH},
we see  that
if a map $f:X\to Y$ is $\pi_{\Q}$-separable, it is a rational Gottlieb map.
\end{rem} 
  

\section{When does $a_f$ admit a section ?}
Let  $f:X\to Y$ be  a $\pi_{\Q}$-separable map with homotopy fiber $F_f$ and 
$ Der  ( \Lambda W, \Lambda V\otimes \Lambda W)$
the  sub-DGL of $Der  (\Lambda V\otimes \Lambda W)$
 restricted to derivations out of  $\Lambda W$.

\begin{prop}\label{Tanre}Let $F_{a_f}$ be the homotopy fiber of $a_f$. 
Then the DGL-model of the fibration $\chi_f :F_{a_f}\overset{j}{\to} (Baut_1X)_0\overset{a_f}{\to} (Baut_1Y)_0$ is given by
$$ Der  ( \Lambda W, \Lambda V\otimes \Lambda W)\overset{incl.}{\to} Der  (\Lambda V\otimes \Lambda W)\overset{b_f}\to Der  (\Lambda V).$$
\end{prop}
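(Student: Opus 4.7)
The plan is to produce a short exact sequence of DGLs and then invoke Tanré's general realization result [T, VI.1.(3)] to identify it as the DGL-model of the homotopy fibration $\chi_f$.

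First, I would reuse the vector-space splitting already obtained in the proof of Proposition \ref{sep}: since $f$ is $\pi_{\Q}$-separable, $\min W \geq \max V$ forces
\[
Der(\Lambda V\otimes\Lambda W) \;=\; Der(\Lambda V) \;\oplus\; Der(\Lambda W,\Lambda V\otimes\Lambda W)
\]
as graded vector spaces, where a derivation $\sigma$ of $\Lambda V\otimes\Lambda W$ must send $V$ into $\Lambda V$ purely by degree. Under this decomposition, the map $b_f$ of Definition~1.1 is nothing but the projection onto the first summand, and the kernel of $b_f$ is exactly $Der(\Lambda W,\Lambda V\otimes\Lambda W)$. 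Moreover, $b_f$ is already known to be a DGL-map (from the proof of Proposition~\ref{sep}), so its kernel is automatically a sub-DGL; this identifies the inclusion $Der(\Lambda W,\Lambda V\otimes\Lambda W)\hookrightarrow Der(\Lambda V\otimes\Lambda W)$ as a DGL-inclusion.

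Next, I would assemble these facts into a short exact sequence of (positively graded, finite type) DGLs
\[
0 \;\to\; Der(\Lambda W,\Lambda V\otimes\Lambda W) \;\overset{\mathrm{incl.}}{\to}\; Der(\Lambda V\otimes\Lambda W) \;\overset{b_f}{\to}\; Der(\Lambda V) \;\to\; 0,
\]
noting that surjectivity of $b_f$ is immediate from the splitting (any derivation of $\Lambda V$ lifts via the inclusion $Der(\Lambda V)\hookrightarrow Der(\Lambda V\otimes\Lambda W)$). Applying Tanré's correspondence between short exact sequences of connected DGLs of finite type and rational fibrations, the spatial realization of this sequence is a fibration whose base is $||Der(\Lambda V)||\simeq (Baut_1Y)_0$, whose total space is $||Der(\Lambda V\otimes \Lambda W)||\simeq (Baut_1X)_0$, and whose classifying map is $||b_f||=a_f$ by Definition~1.1. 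Hence the fiber $F_{a_f}$ is modeled by the sub-DGL $Der(\Lambda W,\Lambda V\otimes\Lambda W)$, which is exactly the stated claim.

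The only real point to be careful about is that Tanré's proposition requires a genuine short exact sequence of DGLs and that the realized inclusion correspond, up to homotopy over $(Baut_1Y)_0$, to the homotopy-fiber inclusion $j:F_{a_f}\to (Baut_1X)_0$; here the hard part is just confirming the differential and Lie bracket are compatible with the splitting. Compatibility of $\partial$ is immediate because $\ker b_f$ is $\partial$-stable (being the kernel of a DGL-map), and compatibility of the bracket follows since composition of a derivation vanishing on $\Lambda V$ with any derivation still vanishes on $\Lambda V$. With these points in hand, the statement follows at once.
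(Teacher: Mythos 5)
Your argument is essentially identical to the paper's proof: both observe that $b_f$ is surjective with kernel exactly $Der(\Lambda W,\Lambda V\otimes\Lambda W)$ and then invoke Tanr\'{e}'s Proposition VI.1.(3) to realize the resulting short exact sequence of DGLs as the fibration $\chi_f$. Your additional checks (the degree-based splitting, $\partial$-stability and bracket-closure of the kernel) are correct elaborations of details the paper leaves implicit.
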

\begin{proof}Since $b_f$ is surjective and $ Der  ( \Lambda W, \Lambda V\otimes \Lambda W)$
is  ${\rm Ker}\ b_f$, it follows from
 \cite[VI .1.(3) Proposition]{T}.
\end{proof}

Let $L(F)=\oplus_{i>0}L(F)_i$
be the degree decomposition of a DGL-model of a space $F$.

\begin{thm}\label{fiber}
 $L(F_{a_f})_n=
\oplus_{i-j=n}Der_i  ( \Lambda W)\otimes H^j(\Lambda V)$.
\end{thm}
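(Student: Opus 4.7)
The plan is to identify $L(F_{a_f})$ with $Der(\Lambda W,\Lambda V\otimes\Lambda W)$ via Proposition~\ref{Tanre} and then extract the decomposition by a K\"unneth-style spectral sequence, with a cleaner alternative route via the mapping-space formula when the finiteness hypotheses of Proposition~\ref{fibre} apply.

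First I would observe that any derivation $\sigma\colon\Lambda W\to\Lambda V\otimes\Lambda W$ is uniquely determined by its values on generators of $W$, giving a canonical identification of graded vector spaces
\[
Der(\Lambda W,\Lambda V\otimes\Lambda W)\;\cong\;\Lambda V\otimes Der(\Lambda W),\qquad v\otimes\tilde\sigma\;\mapsto\;(w\mapsto v\cdot\tilde\sigma(w)),
\]
with $|v\otimes\tilde\sigma|=|\tilde\sigma|-|v|$. Writing $D|_W=\bar D+\delta$, where $\bar D\colon W\to\Lambda W$ is the fibre differential and $\delta\colon W\to\Lambda^{\geq 1}V\otimes\Lambda W$ is the twisting part, a direct computation shows that the filtration-preserving piece of the induced differential is $1\otimes\bar\partial$, while the contributions of $d_V$ and of $\delta$ both strictly increase $V$-word-length.

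Filtering by $V$-word-length then produces a spectral sequence whose $E_0$-differential is $1\otimes\bar\partial$, so that $E_1=\Lambda V\otimes H_*(Der(\Lambda W,\bar D))$, and whose $d_1$ is induced by $d_V$, giving
\[
E_2\;=\;H^*(\Lambda V,d_V)\otimes H_*(Der(\Lambda W,\bar D))\;=\;H^*(\Lambda V)\otimes Der(\Lambda W),
\]
where $Der_i(\Lambda W)$ is to be read in the minimal-model sense as $H_i(Der(\Lambda W,\bar D))\cong\pi_{i+1}(Baut_1F_f)_\Q$. Reindexing by total degree $n=i-j$ yields the claimed formula.

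The main obstacle will be verifying that the spectral sequence collapses at $E_2$: one must rule out higher differentials induced by $\delta$. The $\pi_\Q$-separability hypothesis $\min W\geq\max V$ is precisely what forces this, since it restricts where $\delta$ can land relative to the available generators in positive total degree. An often cleaner alternative, valid when $Y$ and $F_f$ are finite, is to combine Proposition~\ref{fibre}---which gives $F_{a_f}\simeq_\Q Baut_1f\simeq\widetilde{map}(Y,Baut_1F_f;h)$---with the classical function-space identification $\pi_n(\widetilde{map}(Y,Z;h))_\Q\cong\bigoplus_j\pi_{n+j}(Z)_\Q\otimes H^j(Y;\Q)$ applied to $Z=Baut_1F_f$, which produces the decomposition immediately and bypasses the collapse issue entirely.
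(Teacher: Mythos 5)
There is a genuine gap, and it stems from a misreading of what the statement asserts. Theorem \ref{fiber} describes the underlying graded vector space of a (non-minimal) DGL-\emph{model} of $F_{a_f}$: the paper's proof produces an explicit chain map $\rho$ embedding $Der(\Lambda W)\otimes H^*(\Lambda V)$, equipped with the differential it inherits from $\partial_X$, as a direct summand of $Der(\Lambda W,\Lambda V\otimes\Lambda W)$ with acyclic complement. You instead read $Der_i(\Lambda W)$ as $H_i(Der(\Lambda W,\overline D))$ and try to prove the much stronger statement $\pi_{*}(F_{a_f})_{\Q}\cong H_{*}(Der \Lambda W)\otimes H^*(\Lambda V)$ via a collapsing spectral sequence. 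That statement is false, and the collapse you need does not happen. The point is that the twisting part $\delta\colon W\to\Lambda^{\geq 1}V\otimes\Lambda W$ of $D$ contributes a nonzero piece to $d_1$ (its linear-in-$V$ component), not only to higher filtration as your argument tacitly assumes when you identify $d_1$ with the map induced by $d_V$; the hypothesis $\min W\geq\max V$ does nothing to prevent this. Concretely, in Example \ref{counter}(1) ($\Lambda V=\Lambda(v_1)$, $\Lambda W=\Lambda(w_1,w_2)$, $Dw_2=v_1w_1$) one computes $\partial_X(w_1,1)=-(w_2,v_1)\neq 0$ inside $Der(\Lambda W,\Lambda V\otimes\Lambda W)$, so $H_5$ of the fibre complex is $0$ and $(w_2,v_1)$ is a boundary in degree $4$, whereas $\bigl(H_*(Der\Lambda W)\otimes H^*(\Lambda V)\bigr)$ is $\Q$ in both degrees $4$ and $5$. (This same computation is what makes $\delta_f\neq 0$ there.) Your ``cleaner alternative'' fails for the same reason: the formula $\pi_n(\widetilde{map}(Y,Z;h))_{\Q}\cong\bigoplus_j\pi_{n+j}(Z)_{\Q}\otimes H^j(Y;\Q)$ is valid for the component of the constant map (or when the Federer/Haefliger spectral sequence collapses), but $Baut_1f\simeq\widetilde{map}(Y,Baut_1F_f;h)$ involves the component of the \emph{classifying map} of $\xi_f$, which is essential exactly when the fibration is nontrivial, and the example above is again a counterexample.

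A secondary issue, which would remain even after fixing the reading of the statement: with the $V$-word-length filtration the differential $d_V$ is decomposable but not necessarily quadratic, so its contributions are spread over $d_1,d_2,\dots$, and the identification $E_2=H^*(\Lambda V)\otimes(\cdots)$ needs a different filtration (or more care) than the one you set up. The paper avoids all of this by working with chosen cocycle representatives of $H^*(\Lambda V)$ and exhibiting the complementary summand $C$ directly; if you want a spectral-sequence proof you should filter so that the acyclicity of $C$ (the derivations landing in coboundaries and their preimages) becomes visible, rather than aiming for a collapse that the examples in \S 3 explicitly contradict.
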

\begin{proof}
A chain-map
$\rho :Der_i  ( \Lambda W)\otimes H^j(\Lambda V)\to Der_i  (\Lambda W,  \Lambda W\otimes (\Lambda V)^j) $
is given by $\rho (\sigma \otimes [f])(w):=(-1)^{|w|j}\sigma (w)\cdot f$
 induced by an inclusion $H^j(\Lambda V)\to (\Lambda V)^j$.
It is quasi-isomorphic, i.e., 
 there is a decomposition 
$Der  (\Lambda W,  \Lambda W\otimes \Lambda V)=(Der  ( \Lambda W)\otimes H^*(\Lambda V))\oplus C$
for a complex $C$ of derivations 
with  ${H}_*(C)=0$.
\end{proof}


The rational homotopy exact sequence of the strictly induced fibration $\chi_f$:
$$\ \ \ \ \  \cdots \overset{j_{\sharp}}{\to} 
\pi_{n+2}(Baut_1X)_{\Q}\overset{{a_f}_{\sharp}}\to \pi_{n+2}(Baut_1Y)_{\Q} \overset{\delta_f}{\to} $$
$$ \pi_{n+1}(F_{a_f})_{\Q}\overset{j_{\sharp}}{\to} 
\pi_{n+1}(Baut_1X)_{\Q}\overset{{a_f}_{\sharp}}\to \pi_{n+1}(Baut_1Y)_{\Q} \overset{\delta_f}{\to} \cdots$$
is equivalent to  the homology exact sequence:
$$\ \ \ \ \ \ \ \ \ \ \ \ \ \ \ \ \ \ \ \ \ \cdots \to 
H_{n+1}(Der  (\Lambda V\otimes \Lambda W))\overset{{b_f}_*}\to H_{n+1}(Der  (\Lambda V)) \overset{\delta_f}{\to} $$
$$ H_n(Der  ( \Lambda W, \Lambda V\otimes \Lambda W))\to 
H_n(Der  (\Lambda V\otimes \Lambda W))\overset{{b_f}_*}\to H_n(Der  (\Lambda V)) \overset{\delta_f}{\to} \cdots$$

Then we have the following from an ordinary chain complex property:
\begin{claim}\label{deltaf}
The connecting map  $\delta_f$ is   given by $\delta_f ([\sigma ])=[\tau ]$ when  $\partial_X (\sigma )=\tau$
for a $\partial_Y$-cycle  $\sigma$ of $Der  (\Lambda V)$
and a $\partial_X$-cycle  $\tau$ of $Der  (\Lambda W, \Lambda V\otimes \Lambda W)$. 
\end{claim}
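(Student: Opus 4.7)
The plan is to recognize Claim \ref{deltaf} as a direct unwinding of the standard snake-lemma description of the connecting homomorphism attached to the short exact sequence of chain complexes
\[
0 \to Der( \Lambda W, \Lambda V\otimes \Lambda W) \to Der(\Lambda V\otimes \Lambda W, D) \xrightarrow{b_f} Der(\Lambda V, d) \to 0
\]
that realizes the fibration $\chi_f$ in Proposition \ref{Tanre}. Once this sequence is in hand, the only non-formal input needed is a preferred graded vector-space section of $b_f$, which is exactly what $\pi_{\Q}$-separability provides (cf.\ Claim \ref{rea}).

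First I would follow the (if)-direction of the proof of Proposition \ref{sep} to record the splitting
\[
Der(\Lambda V\otimes \Lambda W) = Der(\Lambda V) \oplus Der(\Lambda W, \Lambda V\otimes \Lambda W)
\]
as graded vector spaces (here we use $\max V \leq \min W$), and identify a $\partial_Y$-cycle $\sigma \in Der_{i+1}(\Lambda V)$ with its canonical lift $\tilde{\sigma} \in Der_{i+1}(\Lambda V\otimes \Lambda W)$ defined by $\tilde{\sigma}|_{V} = \sigma$ and $\tilde{\sigma}|_{W} = 0$. By construction $b_f(\tilde{\sigma}) = \sigma$, though $\tilde{\sigma}$ is not in general a $\partial_X$-cycle.

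Next I would show that $\partial_X(\tilde{\sigma}) \in \ker b_f = Der(\Lambda W, \Lambda V\otimes \Lambda W)$. Because $D|_{\Lambda V} = d$, for $v \in V$ a direct computation gives
\[
\partial_X(\tilde{\sigma})(v) = D\tilde{\sigma}(v) - (-1)^{i+1}\tilde{\sigma}(Dv) = d\sigma(v) - (-1)^{i+1}\sigma(dv) = \partial_Y(\sigma)(v) = 0,
\]
so $\tau := \partial_X(\tilde{\sigma})$ vanishes on $V$ and therefore lies in $Der(\Lambda W, \Lambda V\otimes \Lambda W)$; it is automatically a $\partial_X$-cycle because $\partial_X^2 = 0$. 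By the standard zig-zag recipe (lift, differentiate, descend) the connecting map of the long exact homology sequence then sends $[\sigma]$ to $[\tau]$, which is precisely the claim. The only place where any care is needed is the existence of the vector-space section $\sigma \mapsto \tilde{\sigma}$, but this is exactly the content of $\pi_{\Q}$-separability; the rest is routine homological algebra.
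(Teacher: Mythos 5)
Your proposal is correct and follows exactly the route the paper intends: the paper dismisses this claim as following ``from an ordinary chain complex property,'' i.e.\ the snake-lemma description of the connecting map for the short exact sequence $0\to \Ker b_f \to Der(\Lambda V\otimes\Lambda W)\overset{b_f}{\to} Der(\Lambda V)\to 0$ of Proposition \ref{Tanre}, and you have simply spelled out the lift--differentiate--descend zig-zag together with the check that $\partial_X(\tilde{\sigma})$ vanishes on $V$. Nothing is missing.
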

Recall that the following implications hold for a general fibration $\chi: F\to E\overset{p}{\to} B$ of simply connected spaces:
$$ \chi \mbox{ is fibre-trivial }\Rightarrow\  p \mbox{ admits a section }\Rightarrow\  \chi \mbox{ is weakly trivial }\Leftrightarrow\  
\delta=0 .\ \ \ \ \ (*)$$
Here   $\delta :\pi_*(B)\to \pi_{*-1}(F)$ is the connecting map of the homotopy exact sequence for $\chi$.
The following may be  a characteristic  phenomenon in   our fibration $\chi_f$.

\begin{prop}\label{delta} 
$a_f$ admits  a  section if and only if $\delta_f =0$.
\end{prop}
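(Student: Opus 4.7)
The plan is to treat the two directions separately. The forward direction is immediate from exactness: if $s:(Baut_1Y)_0\to (Baut_1X)_0$ satisfies $a_f\circ s\simeq id$, then $(a_f)_\sharp\circ s_\sharp=id$ on rational homotopy groups, so $(a_f)_\sharp$ is surjective in the long exact sequence of $\chi_f$ recalled in \S 3, forcing $\delta_f=0$.

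For the converse, I would work at the DGL level via Proposition \ref{Tanre}. By Claim \ref{rea}, the extension-by-zero map $i:Der(\Lambda V)\hookrightarrow Der(\Lambda V\otimes\Lambda W)$, sending $\sigma$ to $\tilde\sigma$ with $\tilde\sigma|_W=0$ and $\tilde\sigma|_V=\sigma$, satisfies $b_f\circ i=id$ and is a strict map of graded Lie algebras (the bracket of two derivations killing $W$ again kills $W$). Its failure to be a chain map is $\tau:=\partial_X\circ i-i\circ\partial_Y$, which vanishes on $V$ (both sides equal $\partial_Y\sigma$ there) and takes $w$ to $-(-1)^{|\sigma|}\tilde\sigma(Dw)$, so it lands in $C=Der(\Lambda W,\Lambda V\otimes\Lambda W)=\ker b_f$. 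From $\partial_X^2=0$ one checks $\tau$ is a chain map of degree $-1$ from $(Der(\Lambda V),\partial_Y)$ to $(C,\partial_C)$, and by Claim \ref{deltaf} its induced map on homology is precisely $\delta_f$.

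Assuming $\delta_f=0$, the chain map $\tau$ is zero on homology. Since chain complexes of $\Q$-vector spaces split as the direct sum of their homology and an acyclic complex, any chain map between such complexes is null-homotopic iff it vanishes on homology; hence there is a degree-zero $\Q$-linear map $\eta:Der(\Lambda V)\to C$ with $\partial_C\eta-\eta\partial_Y=\tau$. Setting $s:=i-\eta$ gives $b_f\circ s=id$ (since $\eta$ is $C$-valued) and, by construction, $\partial_X\circ s=s\circ\partial_Y$, so $s$ is a chain section of $b_f$.

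The hard part will be promoting this chain-level section to a DGL-section, equivalently to a spatial section of $a_f$, because the correction $\eta$ is not required to respect the Lie bracket. My plan is to exploit that $i$ is already a strict Lie-algebra section, so the bracket-defect of $s$ is a symmetric bilinear map $Der(\Lambda V)\otimes Der(\Lambda V)\to C$ depending quadratically on $\eta$, which should be a cycle representing a secondary obstruction in the appropriate Chevalley--Eilenberg complex. Iteratively absorbing these higher obstructions by modifying $\eta$ -- or, equivalently, viewing $s$ as an $L_\infty$-morphism and rectifying it to a strict DGL-map via the Homotopy Transfer Theorem over $\Q$ -- should produce a DGL-section up to DGL-homotopy. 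Since the relevant obstruction complexes are controlled by the vanishing of $\delta_f$ together with the Lie-algebra splitting $i$, the construction should terminate, and spatial realization then yields the desired section of $a_f$.
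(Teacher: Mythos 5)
Your forward direction is exactly the paper's: a section forces $(a_f)_\sharp$ to be surjective, hence $\delta_f=0$ by exactness; nothing to add there.

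For the converse there is a genuine gap at the final step. You correctly isolate the two key structural facts --- the extension-by-zero map $i$ is a strict section of $b_f$ as graded Lie algebras (but not a chain map), and its chain defect $\tau$ lands in the ideal $C=\Ker b_f$ and computes $\delta_f$ on homology --- and you correctly produce a chain-level section $s=i-\eta$ when $\delta_f=0$. But the passage from this chain section to an actual section of $a_f$ is precisely the content of the proposition, and your proposal leaves it as a sequence of ``should''s: you acknowledge that $s$ need not respect brackets, propose to absorb the bracket defect by an iterative correction or an $L_\infty$-rectification, and assert without argument that the obstructions vanish and the process terminates. In general, for a surjection of DGLs with $H_*$-surjectivity and even a strict graded-Lie splitting, a DGL section up to homotopy need not exist (the paper itself flags that ``weakly trivial $\Rightarrow$ section'' is a special phenomenon here, not a formal implication), so this step cannot be waved through.

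What closes the gap --- and what the paper actually does --- is to pass to the Chevalley--Eilenberg/Sullivan side, where the differential $D=d_1+d_2$ has no terms beyond quadratic, so there is no infinite tower of obstructions to iterate: there is exactly one nonlinear term, and it is killed outright by your own observation that $i(Der(\Lambda V))$ is a graded Lie subalgebra. Dually, $(w,h)\notin[Der(\Lambda V),Der(\Lambda V)]$ gives $d_2(s^{-1}(w,h)^*)\in I\bigl(C^*(Der(\Lambda W,\Lambda V\otimes\Lambda W))\bigr)$; the hypothesis $\delta_f=0$ gives the weak triviality needed to write the minimal model of $(Baut_1X)_0$ as $(\Lambda U\otimes\Lambda Z,D_2)$; and the two together force $D_2(Z)\subset\Lambda U\otimes\Lambda^+Z$, which by Thomas's criterion is exactly the condition for the projection $\Lambda U\otimes\Lambda Z\to\Lambda U$ to be a DGA retraction, i.e.\ for $a_f$ to admit a section. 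So your ingredients are the right ones (they are the linear duals of the paper's), but you must run the argument in the quadratic CE model rather than attempting to rectify $s$ at the DGL level; as written, the converse is not proved.
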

\begin{proof} 
(if)   Let the DGA-model of the fibration $\chi_f$ be given as the commutative diagram:
$${\small \xymatrix{
\Lambda U,d\ar[d]^{\simeq}_{\rho_2\mid_{\Lambda U}}\ar[r]&\Lambda U\otimes \Lambda Z,D_2\ar[d]^{\simeq}_{\rho_2}\ar[r]& \Lambda Z,\overline{D}_2\ar@{=}[d]\\
C^*(Der\Lambda V)\ar[r]\ar@{=}[d]&C^*(Der\Lambda V)\otimes \Lambda Z,D_1\ar[d]^{\simeq}_{\rho_1}\ar[r]& \Lambda Z,\overline{D}_1\ar[d]^{\simeq}\\
C^*(Der\Lambda V)\ar[r]_{ C^*(b_f)\ \ \ }&\ \ C^*(Der(\Lambda V\otimes \Lambda W)),D\ar[r]_{C^*(incl.)}&\ \ C^*(Der( \Lambda W, \Lambda V\otimes \Lambda W)),D',
}}$$
where  $M(Baut_1Y)\cong (\Lambda U,d)$ with $U^{n+1}= H_n(Der  (\Lambda V))$ and
 $M(F_{a_f})\cong ( \Lambda Z,\overline{D}_2)$ with  $Z^{n+1}=  H_n(Der  ( \Lambda W, \Lambda V\otimes \Lambda W))$.
From the assumption
 $\chi_f$ is weakly equivalent, i.e.,
$M(Baut_1X)\cong (\Lambda U\otimes \Lambda Z,D_2)$.
 Let $D=d_1+d_2$ in \S 2.
Then $$ H^{n+1}( C^*(Der(\Lambda V\otimes \Lambda W)),d_1)\cong H_n(Der(\Lambda V\otimes \Lambda W))= U^{n+1}\oplus Z^{n+1}.  $$
Notice that 
$(w,h)
\not\in [Der  (\Lambda V),Der  (\Lambda V)]$
for any $w\in W$ and $h\in \Lambda V\otimes \Lambda W$, where 
$[\ ,\ ]$ is the Lie bracket.
That means 
$$d_2(s^{-1}(w,h)^*)\in I( C^*(Der( \Lambda W, \Lambda V\otimes \Lambda W))) \ \ \ \ \ \ \ \ \ (**)$$ 
Here $I(S )$ is the ideal generated by a basis of a vector space $S$. 
Let  $\sigma$ be  a $\partial_X$-cycle of $Der( \Lambda W, \Lambda V\otimes \Lambda W)$.
Then $[s^{-1}\sigma^* ]\in H_*( C^*(Der( \Lambda W, \Lambda V\otimes \Lambda W)),d_1')\cong Z$
for $D'=d_1'+d_2'$ in \S 2.
Since $$ \rho_1(D_1([s^{-1}\sigma^* ]))\sim d_2(s^{-1}\sigma^*)\ \ \ ;D{\mbox{-cohomologous}},$$
we have  $ D_1([s^{-1}\sigma^* ])\in I(Z)$ by $(**)$,
i.e., 
$D_1(Z)\subset  C^*(Der\Lambda V)\otimes \Lambda^+ Z$.
By $\rho_2$,
 $D_2(Z)\subset  \Lambda U\otimes \Lambda^+ Z$.
Then we have done 
 from  \cite{Th}.\\
(only if) It holds from the above implications $(*)$.
\end{proof}


\begin{thm} If  a $\pi_{\Q}$-separable map
$f:X\to Y$  is rationally fibre-trivial (i.e.,  $X_0\sim (F_f)_0\times Y_0$),
 $a_f$ admits  a  section.
\end{thm}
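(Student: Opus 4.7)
The plan is to construct a DGL-section of the map $b_f$ at the chain-level and invoke spatial realization to get a section of $a_f = \|b_f\|$.

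Rational fibre-triviality of $f$ means that the Koszul--Sullivan extension $(\Lambda V,d)\to (\Lambda V\otimes \Lambda W,D)$ can be taken with $D|_{\Lambda W}=\overline{D}$, so that $D$ preserves both $\Lambda V$ and $\Lambda W$ (the total algebra is the tensor product of the base and fibre models). Under this hypothesis, I would define
$$s:\mathrm{Der}(\Lambda V,d)\longrightarrow \mathrm{Der}(\Lambda V\otimes \Lambda W,D)$$
by sending $\sigma$ to the unique derivation $\tilde{\sigma}$ determined by $\tilde{\sigma}|_V=\sigma$ and $\tilde{\sigma}|_W=0$. This is exactly the inclusion of the summand $\mathrm{Der}(\Lambda V)$ in the vector space decomposition $\mathrm{Der}(\Lambda V\otimes \Lambda W)=\mathrm{Der}(\Lambda V)\oplus \mathrm{Der}(\Lambda W,\Lambda V\otimes \Lambda W)$ used in the proof of Proposition \ref{sep}, and by Claim \ref{rea} we automatically have $b_f\circ s=\mathrm{id}$.

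The content is to check that $s$ is a DGL-map, i.e.\ that it preserves both the bracket and the differential. The bracket check is immediate: any $\tilde{\sigma}$ in the image of $s$ vanishes on $\Lambda W$, so $[\tilde{\sigma},\tilde{\tau}]$ also vanishes on $\Lambda W$, and on $V$ it agrees with $[\sigma,\tau]$. The crucial point, which is where rational fibre-triviality enters, is the verification $s\circ \partial_Y=\partial_X\circ s$. Evaluating on a generator $v\in V$ gives $(\partial_X\tilde{\sigma})(v)=D(\sigma v)-(-1)^{|\sigma|}\tilde{\sigma}(dv)=d(\sigma v)-(-1)^{|\sigma|}\sigma(dv)=(\partial_Y\sigma)(v)$ since $D$ agrees with $d$ on $\Lambda V$. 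Evaluating on $w\in W$ gives $(\partial_X\tilde{\sigma})(w)=D(0)-(-1)^{|\sigma|}\tilde{\sigma}(\overline{D}w)=0$, because $\overline{D}w\in \Lambda W$ and $\tilde{\sigma}$ kills $\Lambda W$; this is precisely the step where $\overline{D}(W)\subset \Lambda W$ is used. Both sides vanish on $W$ and agree on $V$, so $\partial_X\tilde{\sigma}=\widetilde{\partial_Y\sigma}$.

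Having produced the DGL-section $s$ of $b_f$, I would conclude by passing to spatial realization: $\|s\|:(Baut_1Y)_0\to (Baut_1X)_0$ satisfies $a_f\circ \|s\|=\|b_f\|\circ \|s\|=\|b_f\circ s\|=\|\mathrm{id}\|\simeq \mathrm{id}_{(Baut_1Y)_0}$, giving the desired section. The only real obstacle is the differential compatibility, and it is the fibre-triviality hypothesis $\overline{D}(W)\subset \Lambda W$ that makes this work; without it, the candidate section would fail to be a chain map, as already illustrated in the ``only if'' part of Proposition \ref{sep}.
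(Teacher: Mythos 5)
Your proof is correct, but it takes a genuinely different route from the paper's. The paper deduces the theorem in two steps: from Claim \ref{deltaf} and the product form of $D$ it concludes that the connecting map $\delta_f$ of the fibration $\chi_f$ vanishes, and then invokes Proposition \ref{delta}, whose ``if'' direction (vanishing of $\delta_f$ implies a section) is the real work and requires the analysis of the Sullivan model of $\chi_f$ via $C^*(Der(\Lambda V\otimes\Lambda W))$ and the quadratic part $d_2$. You instead build an explicit DGL-section $s$ of $b_f$ (extension by zero on $W$), check directly that the product differential makes $s$ a chain map and a Lie map, and realize. This bypasses Proposition \ref{delta} entirely and is more elementary; it also produces a strict section at the level of derivation Lie algebras rather than only a homotopy section detected through the exact sequence. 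What the paper's route buys in exchange is the sharper equivalence ``section $\Leftrightarrow$ $\delta_f=0$,'' which is needed elsewhere (e.g.\ Example \ref{counter}(2) and Theorem \ref{pull}), whereas your construction only applies when $D$ literally splits.

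One point you should make explicit: the hypothesis as glossed in the statement is an equivalence of total spaces, and what both proofs actually use is that the fibration is rationally fibre homotopy equivalent to the product fibration, so that by Lemma \ref{equi} one may replace $(\Lambda V\otimes\Lambda W,D)$ by the model with $D(W)\subset\Lambda W$ and transport the resulting section back along the DGL-isomorphism $\phi$ satisfying $b_{f_2}\circ\phi=b_{f_1}$. Your phrase ``can be taken with $D|_{\Lambda W}=\overline{D}$'' is doing this work silently; citing Lemma \ref{equi} there closes the gap.
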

\begin{proof}
From the assumption and Claim \ref{deltaf},
we have $\delta_f=0$.
Then it holds from Proposition \ref{delta}.
\end{proof}
Refer \cite[page 292]{NY} for related topics.
Conversely, when $Y$ is an odd-sphere, 
\begin{thm}\label{sec}
If  a $\pi_{\Q}$-separable map $f:X\to Y=S^{2n+1}$ is 
not rationally  fibre-trivial,  
$a_f$ does not admit a section.
Furthermore  ${a_f}\sim *$(the constant map).
\end{thm}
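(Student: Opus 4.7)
The plan is to exploit the fact that $(Baut_1S^{2n+1})_0$ is an Eilenberg--MacLane space, so that $a_f$ is null-homotopic iff it vanishes on the single nontrivial rational homotopy group. First I would observe that $M(S^{2n+1})=(\Lambda v,0)$ with $|v|=2n+1$, so $Der(\Lambda v,0)$ is one-dimensional in degree $2n+1$ (spanned by $(v,1)$) with vanishing differential. Hence $(Baut_1Y)_0\simeq K(\Q,2n+2)$, and $a_f\simeq *$ if and only if $H_{2n+1}(b_f)=0$. In particular, null-homotopy combined with $(Baut_1Y)_0$ being non-contractible already rules out any section, so it suffices to prove $a_f\simeq *$.

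Next, since $v^2=0$ by parity of degree, for each $w\in W$ one can uniquely write $Dw=\bar{D}w+v\,\alpha(w)$ with $\bar{D}w,\alpha(w)\in\Lambda W$. A direct calculation with the formula for $\partial_X$ yields $\partial_X((v,1))=\alpha$, viewed as a degree-$2n$ derivation in $Der(\Lambda W,\Lambda V\otimes\Lambda W)$. Using the vector-space decomposition $Der(\Lambda V\otimes\Lambda W)=Der(\Lambda v)\oplus Der(\Lambda W,\Lambda V\otimes\Lambda W)$ afforded by $\pi_{\Q}$-separability, any $\partial_X$-cycle of degree $2n+1$ has the form $c(v,1)+\tau$ with $c\,\alpha+\partial_X\tau=0$. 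Since $b_f$ sends such a cycle to $c(v,1)$, the map $H_{2n+1}(b_f)$ is nonzero if and only if there exists such a cycle with $c\neq 0$, equivalently if and only if $[\alpha]=0$ in $H_{2n}(Der(\Lambda W,\Lambda V\otimes\Lambda W))$.

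The key step is then to prove $[\alpha]=0$ if and only if $f$ is rationally fibre-trivial. The forward direction is handled by Lemma \ref{equi}: a rationally trivial fibration admits a product model in which $\alpha\equiv 0$, and the induced DGL-isomorphism of derivation complexes transports this identity to the original model. For the converse, given $\beta=\beta_0+v\beta_1\in Der_{2n+1}(\Lambda W,\Lambda V\otimes\Lambda W)$ with $\partial_X\beta=\alpha$, I would take the $\Lambda W$-component of the equation to obtain $\bar{\partial}\beta_0=\alpha$, where $\bar{\partial}$ is the differential on $Der(\Lambda W)$ induced by $\bar{D}$. Thus $[\alpha]$ vanishes in $H_{2n}(Der(\Lambda W),\bar{\partial})\cong\pi_{2n+1}((Baut_1F_f)_0)$; but this class is precisely the rational homotopy class of the classifying map $h_0:S^{2n+1}_0\to(Baut_1F_f)_0$ of $\xi_f$, so $h_0\simeq *$ and $f$ is rationally fibre-trivial.

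Assembling these pieces: if $f$ is not rationally fibre-trivial, then $[\alpha]\neq 0$, so $H_{2n+1}(b_f)=0$, hence $\pi_{2n+2}(a_f)=0$, and consequently $a_f\simeq *$, which in turn precludes a section since $(Baut_1Y)_0$ is non-contractible. The main obstacle is the identification of $[\alpha]\in H_{2n}(Der(\Lambda W),\bar{\partial})$ with the rational class of the classifying map of $\xi_f$; this is a standard but slightly delicate fact, following from the interpretation of $\alpha$ as the holonomy derivation of a fibration over an odd sphere within the Sullivan model framework.
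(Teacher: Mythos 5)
Your proof is correct and follows essentially the same route as the paper: both hinge on computing $\partial_X((v,1))$ as the derivation $\alpha$ recording the $v$-linear part of $D$ on $W$, showing its homology class is nonzero (equivalently $\delta_f$ is injective, equivalently $H_{2n+1}(b_f)=0$), and then reading off the absence of a section and the vanishing of $a_f$ on rational homotopy. You in fact supply the one justification the paper leaves implicit --- that non-triviality of $\xi_f$ forces $[\alpha]\neq 0$ in $H_{2n}(Der(\Lambda W,\Lambda V\otimes\Lambda W))$, via reduction modulo $v$ to $Der(\Lambda W)$ and the identification of $[\alpha]$ with the class of the classifying map of $\xi_f$ --- while your final step (deducing $a_f\simeq *$ from $\pi_*(a_f)=0$, using that $(Baut_1Y)_0\simeq K(\Q,2n+2)$) is exactly the step the paper itself takes.
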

\begin{proof}
Let $M(S^{2n+1})=(\Lambda v,0)$.
Since 
there exists an element $w\in W$ such that $Dw\in \Lambda v\otimes \Lambda^+W$ from the assumption,
$\partial_X(v,1)=\pm (w,\partial Dw/\partial v)+\cdots \neq 0$
in $Der (\Lambda W)$.
From Claim \ref{deltaf} $\delta_f$ is injective  since $\delta_f([(v,1)])=[\pm (w,\partial Dw/\partial v)+\cdots ]\neq 0$.
Then the former  holds from Proposition \ref{delta}.
Furthermore, from the homotopy exact sequence,   we have ${a_f}_{\sharp}=0$.
Thus the latter holds.
\end{proof}

\begin{exmp}\label{counter}
(1)
Let $S^5\times S^7\to X\to Y=S^3$ be a  non-(fibre-)trivial  $\pi_{\Q}$-separable  fibration given by
the model  
$$(\Lambda (v_1),0)\to (\Lambda (v_1, w_1,w_2),D) \to (\Lambda (w_1,w_2),0) $$
with $|v_1|=3$, $|w_1|=5$, $|w_2|=7$, 
 $Dw_1=0$ and  $Dw_2=v_1w_1$.
Then $a_f$ does not admit a section from Theorem  \ref{sec}. 
Indeed $\delta_f:H_{3}(Der  (\Lambda v')) \overset{}{\to}  H_2(Der(\Lambda (w_1,w_2), \Lambda (v_1,w_1,w_2)))$  
is non-trivial from
$\delta_f ([(v_1,1)])=[(w_2,w_1)]\neq 0$.





(2) Let $S^5\times S^7\to X'\to Y'$ be a  non-(fibre-)trivial  $\pi_{\Q}$-separable fibration given by
the model  
$$(\Lambda (v_1,v_2,v_3),d_{Y'})\to (\Lambda (v_1,v_2,v_3, w_1,w_2),D') \to (\Lambda (w_1,w_2),0) $$
with $|v_1|=|v_2|=3$, $|v_3|=5$, $|w_1|=7$, $|w_2|=9$, 
$d_{Y'}(v_1)=d_{Y'}(v_2)=0$, $d_{Y'}(v_3)=v_1v_2$, $D'w_1=0$ and  $D'w_2=v_1w_1$.
Then $a_f$ admits a section from Proposition \ref{delta} since  $\delta_f ([(v_3,1)])= 0$
for $H_*(Der(\Lambda (v_1,v_2,v_3)))=\Q \{ [(v_3,1)]\}$.
However  $\chi_f$ is not trivial
from $[(v_3,1),(w_2,v_2v_3)]=(w_2,v_2)$.
Indeed, then 
$${\mathcal{D}}(s^{-1}(w_2,v_2)^*)=d_2(s^{-1}(w_2,v_2)^*)=s^{-1}(v_3,1)^*\cdot s^{-1}(w_2,v_2v_3)^*$$
for $(C^*(Der(\Lambda (v_1,v_2,v_3, w_1,w_2)),{\mathcal D})$ with ${\mathcal{D}}=d_1+d_2$.
Refer the proof of Proposition \ref{delta}.
\end{exmp}

From Claim \ref{rea}, we obtain

\begin{lem}\label{lemm}
Let $X$ be a pure space
with $M(X)=( \Q [x_1,\cdots ,x_m]\otimes \Lambda (y_1, \cdots ,y_n),d)$ with 
$\max \{ |x_1|, \cdots , |x_m|\}<\min \{ |y_1|, \cdots ,  |y_n|\}$.
If $f:X\to \Pi_{i=1}^m K(\Q, |x_i|)$
is the rational principal fibration given by $M(f): ( \Q [x_1,\cdots ,x_m],0)\to M(X)$ with 
$M(f)(x_i)=x_i$ for all $i$.
Then $a_f\sim *$ if and only if $S=0$ 
for $H_*(Der M(X))=S\oplus T$
with $S\subset Der ( \Q [x_1,\cdots ,x_m])$
and $T\cap Der  (\Q [x_1,\cdots ,x_m])=0$.
\end{lem}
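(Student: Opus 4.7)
The plan is to reduce $a_f\sim *$ to the vanishing of $H_*(b_f)$ via nilpotent rational homotopy theory, and then to match this with $S=0$ using Claim \ref{rea}.

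For the first reduction, since $M(Y)=(\Q[x_1,\ldots,x_m],0)$ has zero differential, the boundary on $Der M(Y)=Der(\Q[x_1,\ldots,x_m])$ also vanishes; hence $H_*(Der M(Y))=Der(\Q[x_1,\ldots,x_m])$ and $\pi_{n+1}((Baut_1Y)_0)_{\Q}\cong Der_n(\Q[x_1,\ldots,x_m])$. Both $(Baut_1X)_0$ and $(Baut_1Y)_0$ are simply connected rational CW-complexes, so a map between them is null-homotopic if and only if it induces the zero map on all rational homotopy groups. Under the Sullivan--Quillen correspondence $(a_f)_{\sharp}=H_*(b_f)$, this yields $a_f\sim *$ if and only if $H_*(b_f)=0$.

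For the second step, Claim \ref{rea} provides a graded vector-space splitting $Der M(X)=Der(\Q[x_1,\ldots,x_m])\oplus C$ with $b_f\mid_{Der(\Q[x_1,\ldots,x_m])}=id$ and $b_f\mid_C=0$. In the induced homology decomposition $H_*(Der M(X))=S\oplus T$, the summand $S$ is identified via $H_*(b_f)$ with $\image H_*(b_f)\subset Der(\Q[x_1,\ldots,x_m])$; the complement $T$ lies in $\ker H_*(b_f)$ and automatically satisfies $T\cap Der(\Q[x_1,\ldots,x_m])=0$, because any cycle in $\ker b_f\cap Der(\Q[x_1,\ldots,x_m])$ must be zero by $b_f\mid_{Der(\Q[x_1,\ldots,x_m])}=id$. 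Thus $S=0$ if and only if $H_*(b_f)=0$, and combining with the first step completes the equivalence.

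The principal subtlety I expect is the identification of $S$ with $\image H_*(b_f)$: the naive inclusion $j\colon Der(\Q[x_1,\ldots,x_m])\hookrightarrow Der M(X)$ is not a chain map, because in a pure space $\partial(j(\tau))(y_j)=\pm\tau(dy_j)$ is generally nonzero. So $S$ cannot simply be read off as the subspace of classes represented by derivations lying in $Der(\Q[x_1,\ldots,x_m])\subset Der M(X)$. The identification instead uses that $b_f$ is a chain map with graded right-inverse $j$ (Claim \ref{rea}), which suffices to pin down $\image H_*(b_f)$ at the homology level and realize it as $S$.
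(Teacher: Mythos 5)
Your second step---identifying $S$ with $\image\, H_*(b_f)$ via Claim \ref{rea}, and the observation that the inclusion $j\colon Der(\Q[x_1,\ldots,x_m])\hookrightarrow Der\,M(X)$ is not a chain map while $b_f$ is a chain map admitting $j$ as a graded right inverse---is correct and is exactly what the paper intends; indeed the paper offers nothing beyond the words ``From Claim \ref{rea}, we obtain,'' so your reconstruction of that half is more careful than the source. The implication $a_f\sim *\Rightarrow S=0$ is also fine, since a null-homotopic map kills rational homotopy groups and $\pi_{*+1}(a_f)_{\Q}=H_*(b_f)$.

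The gap is in the converse, concentrated in the sentence ``a map between simply connected rational CW-complexes is null-homotopic if and only if it induces the zero map on all rational homotopy groups.'' That principle is false: the rationalized quotient map $S^3\times S^3\to S^3\wedge S^3=S^6$ is zero on all rational homotopy groups (the source has $\pi_*\otimes\Q$ concentrated in degree $3$, where $\pi_3(S^6)=0$) yet is an isomorphism on $H^6$, hence not null-homotopic. In DGL terms, $H_*(b_f)=0$ only constrains $b_f$ on cycles of $Der\,M(X)$; to conclude $a_f\sim *$ one must show that the composite out of a \emph{free} minimal model, $\LL(V)\overset{\rho}{\to}Der\,M(X)\overset{b_f}{\to}Der(\Q[x_1,\ldots,x_m])$, is DGL-homotopic to zero. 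A generator $v$ with $\delta v$ decomposable and nonzero need not satisfy $\rho(\delta v)=0$, so $\rho(v)$ is not a cycle and $H_*(b_f)=0$ gives no control over $b_f\rho(v)$ or the resulting quadratic (Whitehead-product) obstructions in $C^*(Der(\Q[x_1,\ldots,x_m]))$. So the ``if'' direction needs an argument specific to this situation; to be fair, this is the same silent inference the paper makes here and in the proof of Theorem \ref{sec}, but as written your proof does not close it.
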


\begin{thm}\label{one}
Let $X$ be an $F_0$-space with $\max \{ |x_1|, \cdots , |x_n|\}<\min \{ |y_1|, \cdots ,  |y_n|\}$. 
Then $a_f\sim *$ for the map 
 $f:X\to Y:=\Pi_{i=1}^n K(\Q, |x_i|)$
of above 
if and only if 
$X$
 satisfies the  Halperin's conjecture.
\end{thm}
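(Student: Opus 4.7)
The plan is to combine Lemma \ref{lemm} with W.~Meier's characterization \cite{M} of the Halperin conjecture for $F_0$-spaces, the same ingredient that underlies Theorem \ref{two}.

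Under the separation hypothesis $\max|x_i|<\min|y_j|$, every positive-degree derivation $\sigma$ of $M(X)$ automatically sends $V$ into $\Lambda V=\Q[x_1,\ldots,x_n]$: a $y$-factor appearing in $\sigma(x_i)$ would contribute degree at least $\min|y_j|>\max|x_i|\ge|x_i|>|\sigma(x_i)|$, which is impossible. Hence $b_f:Der\,M(X)\to Der\,\Q[x]$ is the genuine projection onto the $V$-component, and its target has zero differential, so $H_*(Der\,\Q[x])=Der^+\Q[x]$ is concentrated in positive even degrees. Lemma \ref{lemm} therefore reduces $a_f\sim *$ to the vanishing of $S=\mathrm{image}(H_*(b_f))\subset Der^+\Q[x]$.

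The key step is to show that $H_*(b_f)$ is injective in positive even degrees. Suppose $\sigma\in Der_{2k}M(X)$ is a $\partial$-cycle with $b_f(\sigma)=0$, so $\sigma(x_i)=0$ for every $i$. Then $\sigma\in Der(\Lambda W,M(X))$, and the cycle condition $\partial\sigma(y_j)=d(\sigma(y_j))=0$ makes each $\sigma(y_j)$ a $d$-cocycle of odd degree $|y_j|-2k$. Because $X$ is $F_0$, $H^{\mathrm{odd}}(X;\Q)=0$, so $\sigma(y_j)=dk_j$ for some $k_j\in M(X)$; the elementary derivation $\tau=\sum_j(y_j,k_j)$ then satisfies $\partial\tau=\sum_j(y_j,dk_j)=\sigma$, so $[\sigma]=0$.

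Finally I invoke Meier's theorem: for the $F_0$-space $X$, Halperin's conjecture for $X$ is equivalent to $\pi_{\mathrm{even}}(aut_1X)\otimes\Q=0$, i.e.\ to $H_{2k}(Der\,M(X))=0$ for every $k\geq 1$. Since the image of $H_*(b_f)$ lives in positive even degrees and $H_*(b_f)$ is injective there, $S=0$ iff this even-degree homology vanishes iff $X$ satisfies Halperin's conjecture, which combined with Lemma \ref{lemm} yields $a_f\sim *\iff X$ satisfies Halperin.

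The main obstacle is the injectivity step, where both the $F_0$ property (supplying the lifts $k_j$ via $H^{\mathrm{odd}}=0$) and the separation hypothesis (making $b_f$ a genuine projection onto $Der\,\Lambda V$ rather than merely a chain map) are essential; the remainder is a direct translation of Meier's criterion through Lemma \ref{lemm}.
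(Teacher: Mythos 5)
Your proof is correct and follows the paper's own route: Lemma \ref{lemm} combined with Meier's characterization of the Halperin conjecture for $F_0$-spaces. The only difference is bookkeeping: the paper cites Meier in the form $Der\,H^*(X;\Q)=0$ and leaves the identification of $S$ implicit, whereas you use the equivalent form $H_{\mathrm{even}}(Der\,M(X))=0$ and supply the bridge explicitly by proving $H_{2k}(b_f)$ injective; that injectivity argument (using $\sigma(x_i)=0$ to kill $\sigma(f_j)$ and $H^{\mathrm{odd}}(X;\Q)=0$ to produce the primitives $k_j$) is sound.
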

\begin{proof}
It follows from Lemma \ref{lemm} since the  Halperin's conjecture  is equivalent to that 
$Der H^*(X;\Q )=Der (\Q [x_1,\cdots ,x_n]/(f_1,\cdots ,f_n))=0$ \cite{M}.
\end{proof}

\noindent
{\it Proof of Theorem \ref{two}}. 
Let $M(Y)=(\Lambda V,d)=( \Q [x_1,\cdots ,x_n]\otimes \Lambda (y_1, \cdots ,y_n),d)$ with $dx_i=0$ and $dy_i=f_i$
for $i=1,..,n$.\\
(if) Let $M(Y)=(\Lambda V,d)\to (\Lambda V\otimes \Lambda W,D)$
be the model of $f$. 
From the regularity of $f_1,\cdots ,f_n$, $Im D\subset  \Q [x_1,\cdots ,x_n]\otimes \Lambda W$.
Thus 
$$\partial_X(x_i,h_i)=\sum_{j=1}^n(y_j,(\partial f_j/\partial x_i)\cdot h_i)+\theta \ \ \mbox{and} \ \ \partial_X(y_i,h_i)=0\ \ \ \ (i=1,\cdots ,n)$$
for  any $h_i\in  \Q [x_1,\cdots ,x_n]$ with suitable degree  and some $\theta \in Der ( \Lambda W, \Lambda V\otimes \Lambda W)$. 
Then we have $\delta_f=0$ from Claim \ref{deltaf} since
 $H_{even}(DerM(Y))=0$ \cite{M} from the assumption.
Then $a_f$ admits a section from Proposition  \ref{delta}.
Furthermore the Lie bracket decomposition of
an element of  $ Der  ( \Lambda W, \Lambda V\otimes \Lambda W)$
does not have an element of $Der (\Lambda V)$ as a factor from Theorem \ref{fiber}
since $Der H^*(Y;\Q )=0$ \cite{M} again. 
Thus we have $D_2=d\otimes 1\pm 1\otimes \overline{D}_2$
for the Sullivan minimal model $(\Lambda U,d)\to (\Lambda U\otimes \Lambda Z,D_2)\to (\Lambda Z,\overline{D}_2)$ of $\chi_f$ 
(in the proof of Proposition \ref{delta}).\\
(only if) Suppose there is a non-zero element  $[\sum_{i}(x_i,h_i)+\sum_{j}(y_j,g_j) ]\in H_{2m}(DerM(Y))$ for
$h_i\in  \Q [x_1,\cdots ,x_n]$, $g_j\in \Lambda V$ and  some $m$.
Let $S^a\times S^b\to X\overset{f}{\to} Y$
be a rational fibration of the model:
$$  (\Lambda V,d)\to (\Lambda V\otimes \Lambda (w_1,w_2),D)\to (\Lambda (w_1,w_2),0)$$
where $|w_1|=a$ and $|w_2|=b$ are odd with $b-a=|x_k|-1$ for some $k$,
$h_k$ is not $d_Y$-exact, $Dw_1=0$ and 
$Dw_2=x_kw_1$.
Then 
$$\delta_f( [\sum_{i}(x_i,h_i)+\sum_{j}(y_j,g_j)  ])=[(w_2,h_kw_1)]\neq 0$$
for $\delta_f:H_{2m}(Der \Lambda V)\to H_{2m-1}(Der (\Lambda (w_1,w_2), \Lambda V\otimes \Lambda (w_1,w_2))$.
In particular,  $\chi_f$ is not fibre-trivial.
\hfill\qed\\


\begin{exmp} 
Let $Y$ be the homogeneous space $SU(6)/SU(3)\times SU(3)$.
Then $Y$ is a pure space but not an $F_0$-space since $\rank \ SU(6)=5>4=\rank \ (SU(3)\times SU(3))$.
Let
$\xi :S^{11}\times S^{23}\to X\overset{f}{\to} Y$ be a fibration whose relative model is given as
$$(\Lambda (x_1,x_2,y_1,y_2,y_3),d_Y)\to  (\Lambda (x_1,x_2,y_1,y_2,y_3)\otimes \Lambda (w_1,w_2),D)\to  (\Lambda (w_1,w_2),0)
$$
where $|x_1|=4$,   $|x_2|=6$,   $|y_1|=7$,   $|y_2|=9$,   $|y_3|=11$,    $|w_1|=11$,   $|w_2|=23$,  
$d_Yy_1=x_1^2$, $d_Yy_2=x_1x_2$, $d_Yy_3=x_2^2$, $Dw_1=0$ and $Dw_2=(x_1y_2-x_2y_1)w_1$.
Then $\partial_X((y_1,1))=(w_2,x_2w_1)$, i.e., $\delta_f [(y_1,1)]=[(w_2,x_2w_1)]\neq 0$.
In particular $\chi_f$ is not trivial.
Refer \cite[Example 1.14(2)]{NY} for the Sullivan minimal model of $Baut_1Y$.
\end{exmp}






In this section finally we mention about a heredity property for a pull-back:
\begin{thm}\label{pull} Let $f':X'\to Y'$ be the  pull-back of a map $f:X\to Y$ by a map $g:Y'\to Y$ with a rational section.
Suppose that both $f$ and $f'$ are $\pi_{\Q}$-separable.
If 
$a_f$ admits a section, then  $a_{f'}$ does so.
\end{thm}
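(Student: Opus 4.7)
My plan is to apply Proposition \ref{delta}, which reduces the theorem to showing that the connecting homomorphism $\delta_{f'}$ of the long exact sequence of $\chi_{f'}$ vanishes, given $\delta_f=0$. The comparison will be carried out by transporting derivations via the rational section of $g$.

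For the model setup, after replacing $M(Y')$ by a quasi-isomorphic DGA if necessary, the section yields a strict DGA retraction $M(s):(\Lambda V',d')\to (\Lambda V,d)$ with $M(s)\circ M(g)=\mathrm{id}_{\Lambda V}$; pick a graded complement $V''$ of $M(g)V$ in $V'$ so that $\Lambda V'\cong \Lambda V\otimes \Lambda V''$ as graded algebras. Since both $f$ and $f'$ are $\pi_\Q$-separable and $f'=g^*f$, the pullback is modelled by $M(X')=(\Lambda V'\otimes \Lambda W,D')$ with $D'|_{\Lambda V'}=d'$ and $D'w=(M(g)\otimes 1)(Dw)\in \Lambda V\otimes \Lambda W$ for $w\in W$, so the extensions $M(\tilde g):=M(g)\otimes 1_W$ and $\rho:=M(s)\otimes 1_W$ are DGA maps with $\rho\circ M(\tilde g)=\mathrm{id}_{M(X)}$.

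Transporting derivations, the conjugation $R(\sigma'):=\rho\circ \sigma'\circ M(\tilde g)$ is a chain map $Der\,M(X')\to Der\,M(X)$ which, together with $R_V(\sigma'):=M(s)\circ \sigma'\circ M(g)$, induces a morphism of the short exact sequences of derivation complexes underlying $\chi_f$ and $\chi_{f'}$ (using Claim \ref{rea} and $\pi_\Q$-separability to verify $b_f\circ R=R_V\circ b_{f'}$). The resulting naturality identity on long exact sequences reads $R_{\mathrm{fibre}}\circ \delta_{f'}=\delta_f\circ R_V$, so the hypothesis $\delta_f=0$ immediately gives $R_{\mathrm{fibre}}\circ \delta_{f'}=0$.

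To upgrade this to $\delta_{f'}=0$ itself, I would use the chain-level section $J_{\mathrm{fibre}}(\eta):=M(\tilde g)\circ \eta$ of $R_{\mathrm{fibre}}$. Given a $\partial_{Y'}$-cycle $\sigma'\in Der\,\Lambda V'$, set $\sigma:=R_V(\sigma')$ and pick, via $\delta_f=0$, an $\eta\in Der(\Lambda W,\Lambda V\otimes \Lambda W)$ with $\partial_X\eta=\partial_X\tilde\sigma$. A direct calculation shows the residue $\partial_{X'}\tilde\sigma'-\partial_{X'}(J_{\mathrm{fibre}}\eta)$ equals $\gamma$ with $\gamma(w)=\pm\hat\beta(Dw)$, where $\beta:=(\mathrm{id}-M(g)M(s))\circ \sigma'|_V$ is the $V''$-ideal component of $\sigma'|_V$. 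The main obstacle is then to show $\gamma$ is itself a $\partial_{X'}$-coboundary in $Der(\Lambda W,\Lambda V'\otimes \Lambda W)$. This works because $D'w\in \Lambda V\otimes \Lambda W$ has no $V''$-dependence, so the sub-complex $Der(\Lambda W,V''\text{-ideal}\otimes \Lambda W)$ is $\partial_{X'}$-stable, and inside it one constructs an explicit primitive from the naive lift $\tilde\sigma'_\beta\in Der\,M(X')$ (extending $\beta$ by zero on $V''$ and $W$) corrected by a derivation vanishing on $V'$ that absorbs the non-$W$ components of $\partial_{X'}\tilde\sigma'_\beta$. Once $\delta_{f'}=0$, Proposition \ref{delta} delivers the desired section of $a_{f'}$.
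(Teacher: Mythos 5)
Your reduction of the theorem to $\delta_{f'}=0$ via Proposition \ref{delta}, your choice of model (a strict retraction with $d'V''\subset \Lambda V\otimes\Lambda^{+}V''$ and $D'w=Dw\in\Lambda V\otimes\Lambda W$), and the computation identifying $\delta_{f'}[\sigma']$ with the residue class $[\gamma]$, $\gamma(w)=\pm\hat\beta(Dw)$, are all sound and correctly isolate the crux. The gap is in the final step. You propose to bound $\gamma$ in the fibre complex by taking the naive lift $\hat\beta$ ``corrected by a derivation vanishing on $V'$ that absorbs the non-$W$ components of $\partial_{X'}\hat\beta$.'' This cannot work as described: $Der(\Lambda W,\Lambda V'\otimes\Lambda W)=\Ker\, b_{f'}$ is $\partial_{X'}$-stable (Proposition \ref{Tanre}), so the boundary of any correction vanishing on $V'$ again vanishes on $V'$ and cannot cancel the $V''$-components of $\partial_{X'}\hat\beta$ (one has $\partial_{X'}\hat\beta=\gamma+\theta$ with $\theta(v'')=\mp\hat\beta(d'v'')$, generally nonzero); moreover $\hat\beta$ itself does not vanish on $V$, so no such corrected element lies in the fibre subcomplex at all. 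What your construction actually yields is that $\gamma$ is homologous to $-\theta$ in the full complex $Der(\Lambda V'\otimes\Lambda W)$, but that carries no information: every class in the image of $\delta_{f'}$ dies in the full complex by exactness of the long exact sequence. Nor is ``values in the $V''$-ideal'' enough by itself: the subcomplex $Der(\Lambda W,\Lambda V\otimes\Lambda^{+}V''\otimes\Lambda W)$ is indeed $\partial_{X'}$-stable but not acyclic in general (already for $Y'=Y\times Z$ its homology carries $H^{+}(Z;\Q)$-contributions). So a primitive inside the fibre complex is genuinely needed and is not produced.

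Note also that your naturality square goes the wrong way for the conclusion: $R_{\mathrm{fibre}}\circ\delta_{f'}=\delta_f\circ R_V$ with $\delta_f=0$ only gives $R_{\mathrm{fibre}}\circ\delta_{f'}=0$, as you acknowledge. The paper instead proves a ``mixed'' square, $\delta_{f'}=H_{*-1}(Der(g^*))\circ\delta_f\circ H_*(c_g)$, where $c_g$ is the chain-level projection $Der(\Lambda V\otimes\Lambda U)\to Der(\Lambda V)$ (a chain map because the section forces $d'U\subset\Lambda V\otimes\Lambda^{+}U$, as in the proof of Theorem \ref{section}) and $Der(g^*)$ changes coefficients along $g^*:H^*(Y;\Q)\to H^*(Y';\Q)$ under the identification of the fibre homology given by Theorem \ref{fiber}; the ingredients are Claim \ref{deltaf} and $D'W=DW\subset\Lambda V\otimes\Lambda W$ from \cite[Proposition 15.8]{FHT}. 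This factorization expresses $\delta_{f'}$ itself through $\delta_f$, so $\delta_f=0$ kills $\delta_{f'}$ outright --- exactly the strengthening your argument lacks. To repair your proof you would need to justify such a factorization (equivalently, show $[\gamma]=0$ in $H_*(Der(\Lambda W,\Lambda V'\otimes\Lambda W))$ using the $Der(\Lambda W)\otimes H^*(Y';\Q)$ description of that homology), rather than exhibit a primitive in the ambient complex.
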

\begin{proof}
From \cite[Proposition 15.8]{FHT}, the Sullivan model of the pull-back diagram:
$$\xymatrix{F_{f'}\ar[d]_{i'}\ar@{=}[r]& F_f\ar[d]^i\\
X'\ar[d]_{f'}\ar[r]^{g'}& X\ar[d]^f\\
Y'\ar[r]_g&Y
}$$
is given as 
$${\small \xymatrix{\Lambda W,\overline{D'} & \Lambda W,\overline{D}\ar@{=}[l]\\
\Lambda V\otimes \Lambda U\otimes \Lambda W,D'\ar[u]& \Lambda V\otimes \Lambda W,D\ar[u]\ar[l]\\
\Lambda V\otimes \Lambda U,d'\ar[u]&\ar[l]\Lambda V,d\ar[u]
}}$$
where $D'W=DW\subset \Lambda V\otimes \Lambda W$.
Then, from Claim \ref{deltaf},  the following is commutative:
$$\xymatrix{
H_*(Der(\Lambda V))\ar[r]^{\delta_f\ \ \ \ \ \ \  \ \ }&H_{*-1}(Der (\Lambda W, \Lambda W\otimes H^*(Y;\Q))\ar[d]^{H_{*-1}(Der(g^*))}\\
H_*(Der(\Lambda V\otimes \Lambda U))\ar[r]^{\delta_{f'}\ \ \ \ \ \ \ \ \ \ }\ar[u]^{H_*(c_g)}&H_{*-1}(Der (\Lambda W,\Lambda W\otimes H^*(Y';\Q)), }
$$
where $c_g$ is same as $b_g$ as a chain map (see the proof of Theorem  \ref{section}).
Here $Der(g^*): Der (\Lambda W, \Lambda W\otimes H^*(Y;\Q))\to Der (\Lambda W,\Lambda W\otimes H^*(Y';\Q))$
is given  by $Der(g^*)((w,w'\otimes y))=(w,w'\otimes g^*(y))$.
Then 
$\delta_{f'}=0$ if $\delta_f=0$.
Thus it  follows from Proposition \ref{delta}.
\end{proof}

Remark that $\chi_{f'}$ is not a pull-back of $\chi_f$.
Moreover the converse of  Theorem  \ref{pull}  is false in general.
Indeed, we see in Example \ref{counter}
that the fibration of (2) is the pull-back of (1) by a map $g:Y'\to Y=S^3$ with $M(g)(v_1)=v_1$.

\section{The obstruction class for a lifting}

Let $L(B)=(L(B),\partial_B)$ be the Quillen model of a simply connected CW complex $B$ of finite type.
Then $L(B\cup_{\alpha} e^N)$ is given by  $L(B)\coprod \LL(u),\partial_{\alpha}$ 
where  $|u|=N-1$, $\partial_{\alpha}\mid_{L (B)}=\partial_B$ and $\partial_{\alpha}(u)\in L(B)$ \cite[Proposition III.3.(6)]{T}.
\begin{thm}\label{obs}  For a $\pi_{\Q}$-separable map   $f:X\to Y$, let 
$$\xymatrix{B\ar[r]^{h_X\ \ \ }\ar[d]_i&  (Baut_1X)_0\ar[d]^{a_f}\\
B\cup_{\alpha} e^N \  \ar[r]_{h_Y}&\ (Baut_1Y)_0
}$$
be a  commutative diagram.
Then there is a lift $h$ such that
$$\xymatrix{B\ar[r]^{h_X\ \ \ }\ar[d]_i&  (Baut_1X)_0\ar[d]^{a_f}\\
B\cup_{\alpha} e^N \  \ar[r]_{h_Y}\ar@{.>}[ur]^{{h}}&\ (Baut_1Y)_0
}$$
is  commutative if and only if 
$${\mathcal O}_{\alpha}(h_X, h_Y):=[\tau (h_Y(u))-h''_X(\partial_{\alpha}(u)) ]=0$$
in $H_{N-2}(Der  ( \Lambda W, \Lambda V\otimes \Lambda W))=\pi_{N-1}(F_{a_f})_{\Q}$ for  the DGL-commutative  diagram
$$\xymatrix{L(B)\ar[r]^{h_X\ \ \ \ \ \  }\ar[d]_i& Der  (\Lambda V\otimes \Lambda W),\partial_X\ar[d]^{b_f}\\
L(B)\coprod \LL(u),\partial_{\alpha} \  \ar[r]_{h_Y}&\ Der  (\Lambda V),\partial_Y
}$$
with

$\bullet$ $\partial_X\mid_{ Der  (\Lambda V, \Lambda V\otimes \Lambda W)}=\partial_Y+\tau$ and  $\partial_X
\mid_{ Der  (\Lambda W, \Lambda V\otimes \Lambda W)}=\tau$ for some 
$\tau :  Der_*(\Lambda V\otimes \Lambda W)\to Der_{*-1}  (\Lambda W, \Lambda V\otimes \Lambda W)$
and 

$\bullet$ $h_X=h_X'+h_X''$ where $h_X'(b)\in Der  (\Lambda V)$ and $h_X''(b)\in Der  (\Lambda W, \Lambda V\otimes \Lambda W)$
for $b\in L(B)$.
 \end{thm}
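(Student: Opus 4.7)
By the Quillen model for a cell attachment recalled just before the theorem, a rational lift $h$ corresponds to a DGL-map $\widetilde h\colon L(B)\coprod\LL(u),\partial_\alpha\to Der(\Lambda V\otimes\Lambda W)$ that restricts to $h_X$ on $L(B)$ and satisfies $b_f\circ\widetilde h=h_Y$. Since $\widetilde h$ is forced on $L(B)$, the problem reduces to choosing a single element $\widetilde h(u)\in Der_{N-1}(\Lambda V\otimes\Lambda W)$ subject to the lifting condition $b_f(\widetilde h(u))=h_Y(u)$ and the DGL-compatibility condition $\partial_X(\widetilde h(u))=h_X(\partial_\alpha(u))$.

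By $\pi_\Q$-separability (cf.\ the proof of Proposition \ref{sep}) one has the direct-sum splitting $Der(\Lambda V\otimes\Lambda W)=Der(\Lambda V)\oplus Der(\Lambda W,\Lambda V\otimes\Lambda W)$, along which $\partial_X$ decomposes as $\partial_Y+\tau+\partial'$ with $\partial':=\partial_X|_{Der(\Lambda W,\Lambda V\otimes\Lambda W)}$. The lifting condition forces $\widetilde h(u)=h_Y(u)+\eta$ for some $\eta$ in the kernel of $b_f$. Splitting the compatibility condition according to this decomposition, its $Der(\Lambda V)$-component reads $\partial_Y h_Y(u)=h_X'(\partial_\alpha(u))$ and holds automatically because $h_Y$ is DGL and $b_f\circ h_X=h_Y\circ i$, while the remaining component reads
\[
\partial'\eta \;=\; h_X''(\partial_\alpha(u))-\tau(h_Y(u)).
\]
Solvability for $\eta$ is thus equivalent to the vanishing of the class $\mathcal O_\alpha(h_X,h_Y):=[\tau(h_Y(u))-h_X''(\partial_\alpha(u))]$ in $H_{N-2}(Der(\Lambda W,\Lambda V\otimes\Lambda W))$, which equals $\pi_{N-1}(F_{a_f})_\Q$ by Proposition \ref{Tanre}. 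The two directions of the \emph{iff} follow: from a lift one reads off $\eta=\widetilde h(u)-h_Y(u)$ and concludes the right-hand side is a $\partial'$-boundary, while conversely such an $\eta$ defines $\widetilde h(u)=h_Y(u)+\eta$ and hence the required DGL extension.

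It remains to verify that the right-hand side is a $\partial'$-cycle, so that the class is well-defined. Two ingredients suffice: first, $h_X$ being DGL together with $\partial_\alpha^2(u)=0$ gives $\partial_X h_X(\partial_\alpha(u))=0$, whose projection to $Der(\Lambda W,\Lambda V\otimes\Lambda W)$ reads $\partial' h_X''(\partial_\alpha(u))+\tau h_X'(\partial_\alpha(u))=0$; second, $\partial_X^2=0$ applied to $h_Y(u)\in Der(\Lambda V)$ and projected onto the same summand yields the identity $\partial'\tau=-\tau\partial_Y$ on $Der(\Lambda V)$, so $\partial'\tau(h_Y(u))=-\tau\partial_Y(h_Y(u))=-\tau(h_X'(\partial_\alpha(u)))$, using once more that $h_Y$ is DGL and the compatibility $b_f\circ h_X=h_Y\circ i$. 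Adding these identities gives $\partial'\bigl(\tau(h_Y(u))-h_X''(\partial_\alpha(u))\bigr)=0$. The principal technical point is the careful block-decomposition of $\partial_X$ along the $\pi_\Q$-separable splitting and the sign bookkeeping in the derivation bracket; otherwise the argument is the standard DGL obstruction-theory template applied to a cellular attachment.
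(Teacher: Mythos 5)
Your proposal is correct and follows essentially the same route as the paper: identify the lift with a choice of $\widetilde h(u)=h_Y(u)+\eta$ with $\eta\in\Ker b_f=Der(\Lambda W,\Lambda V\otimes\Lambda W)$, observe that the $Der(\Lambda V)$-component of the compatibility equation holds automatically from $b_f\circ h_X=h_Y\circ i$ and $h_Y$ being a DGL-map, and reduce solvability to the vanishing of $[\tau(h_Y(u))-h''_X(\partial_\alpha(u))]$; your cycle-check and the converse direction (reading off $\eta$ from a given lift, so that $\tau h(u)\sim\tau h_Y(u)$) match the paper's equations (1)--(5) up to notation ($\partial'$ versus the paper's $\tau$ restricted to $\Ker b_f$, and $\eta=-q$).
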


\begin{proof}
Since $b_f\circ h_X= h_Y\circ i$ and $h_Y$ is a DGL-map, 
 $$h'_X\partial_{\alpha}(u)= h_Y\partial_{\alpha}(u)= \partial_{Y}h_Y(u)\ \ \ \ \ \ (1)$$
in $Der  ( \Lambda V)$. 
Notice that the obstruction element $\partial_X (h_Y(u))-h_X(\partial_{\alpha}(u))$
is a $\partial_X$-cycle in   $Der  ( \Lambda V\otimes \Lambda W)$.
Therefore $\tau (h_Y(u))-h''_X(\partial_{\alpha}(u))$ is a $\partial_X$-cycle
in $Der  ( \Lambda W,  \Lambda V\otimes \Lambda W)$ from (1).\\
(if) Suppose that  ${\mathcal O}_{\alpha}(h_X, h_Y)=0$. 
Then there is an element $q\in Der  ( \Lambda W, \Lambda V\otimes \Lambda W)$ such that
$$\partial_X(q)= \tau (h_Y(u))-h''_X(\partial_{\alpha}(u)).\ \ \ \ \ \ \ \ (2)$$
Let $$h\mid_{L(B)} :=h_X \ \mbox{ and } \ \ h(u):=h_Y(u)-q$$
Then $h$ is a DGL-map since $$\partial_{X}(h(u))=\partial_X(h_Y(u))-\partial_X(q)$$
$$=(h'_X\partial_{\alpha}(u)+\tau h_Y(u))- (\tau h_Y(u)-h''_X\partial_{\alpha}(u))$$
$$=h'_X\partial_{\alpha}(u)+h''_X\partial_{\alpha}(u)=h_X(\partial_{\alpha}(u))=h(\partial_{\alpha}(u))$$
from (1) and (2).
Furthermore  $$\xymatrix{L(B)\ar[r]^{h_X\ \ \ \ \ \  }\ar[d]_i& Der  (\Lambda V\otimes \Lambda W),\partial_X\ar[d]^{b_f}\\
L(B)\coprod \LL(u),\partial_{\alpha} \  \ar[r]_{h_Y}\ar@{.>}[ur]^{{h}}&\ Der  (\Lambda V),\partial_Y
}\ \ \ \ \ \ (*)$$
is  commutative since $b_f(q)=0$. Thus the (if)-part holds from  the special realization of $(*)$. 
\\
(only if) 
Suppose that  there exists a map $h$ such that $(*)$ is  commutative.
Since $h$ is a DGL-map,  $$\partial_X(h(u)) =h(\partial_{\alpha}(u))\ \ \ \ \ \ (3)$$ in $Der  ( \Lambda V\otimes \Lambda W)$
and   $$h_X''\partial_{\alpha}(u)= \tau h (u)\ \ \ \ \  \ \ \  \ (4)$$
from (1) and (3).
Furthermore 
$$\tau h (u) \sim \tau h_Y(u)\ \ \ \ \ \  \ \ \ \ (5)$$
in $Der  (  \Lambda W, \Lambda V\otimes \Lambda W)$.
Here $\sim$ means ``homologous''.
Indeed,
(5) follows
since
$$h(u)=h_Y(u)+x$$
for some element $x\in Der  ( \Lambda W, \Lambda V\otimes \Lambda W)$  from $b_f\circ h= h_Y$
and then since
$$ \tau h(u)=\tau (h_Y(u)+x)=\tau h_Y(u)+\partial_X( x).$$
Thus we obtain that  ${\mathcal O}_{\alpha}(h_X, h_Y)=[\tau (h_Y(u))-h''_X(\partial_{\alpha}(u)) ]=0$
from (4) and (5).
\end{proof}





From Theorem \ref{fiber}, we have

\begin{cor}
If $\pi_{\geq N-1}(Baut_1F_f)_{\Q}=0$ for the homotopy fiber $F_f$ of $f$,
there exists a lift $h$ for  the pair $(h_X,h_Y)$ of above.
\end{cor}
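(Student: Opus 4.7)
The plan is to combine Theorem \ref{obstr} with Proposition \ref{fibre} and Theorem \ref{fiber} to show that, under the hypothesis, the entire group in which the obstruction lives is already trivial, so no cohomological cocycle computation is needed.

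First, by Theorem \ref{obstr} a lift $h$ exists for the pair $(h_X,h_Y)$ if and only if the class ${\mathcal O}_{\alpha}(h_X,h_Y)$ vanishes in $\pi_{N-1}(Baut_1 f)_{\Q}$. It therefore suffices to prove that this obstruction group itself is zero. By Proposition \ref{fibre}, $F_{a_f}$ has the rational homotopy type of $Baut_1 f$, so
\[ \pi_{N-1}(Baut_1 f)_{\Q} \cong \pi_{N-1}(F_{a_f})_{\Q} \cong H_{N-2}(L(F_{a_f})). \]

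Next I would invoke Theorem \ref{fiber}, which identifies $L(F_{a_f})$ at the chain level with $Der(\Lambda W)\otimes H^*(\Lambda V)$ via the quasi-isomorphism $\rho$, where a summand $Der_i(\Lambda W)\otimes H^j(\Lambda V)$ contributes in (homological) degree $i-j$ and the differential acts only on the first tensor factor. Since $H^*(\Lambda V)$ carries the zero differential, K\"unneth yields
\[ H_{N-2}(L(F_{a_f})) \cong \bigoplus_{i-j=N-2} H_i(Der(\Lambda W))\otimes H^j(\Lambda V). \]
Using the identification $H_i(Der(\Lambda W))\cong \pi_{i+1}(Baut_1 F_f)_{\Q}$ recalled in Section~2, and noting that $j\geq 0$ forces $i=N-2+j\geq N-2$ (equivalently $i+1\geq N-1$), the hypothesis $\pi_{\geq N-1}(Baut_1 F_f)_{\Q}=0$ makes every summand vanish.

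Consequently $\pi_{N-1}(Baut_1 f)_{\Q}=0$, the obstruction class ${\mathcal O}_{\alpha}(h_X,h_Y)$ is automatically trivial, and Theorem \ref{obstr} produces the desired lift $h$. The only point that requires care, which I regard as the main bookkeeping obstacle, is aligning three different degree conventions simultaneously: the homological grading used for $L(F_{a_f})$, the degree shift $\pi_n(\_ )_{\Q}\cong H_{n-1}$ for Quillen models, and the bigrading $(i,j)$ in Theorem \ref{fiber} whose total homological degree is $i-j$ rather than $i+j$. Once those identifications are lined up, the corollary is a direct consequence of the two preceding theorems.
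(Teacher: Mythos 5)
Your proposal is correct and follows exactly the route the paper intends: the paper derives this corollary directly from Theorem \ref{fiber} (together with Theorem \ref{obstr} and Proposition \ref{fibre}), and your argument simply spells out the degree bookkeeping --- that every summand $H_i(Der(\Lambda W))\otimes H^j(\Lambda V)$ contributing to $H_{N-2}(L(F_{a_f}))$ has $i=N-2+j\geq N-2$, hence is killed by the hypothesis $\pi_{\geq N-1}(Baut_1F_f)_{\Q}=H_{\geq N-2}(Der(\Lambda W))=0$. No gaps; the identifications of gradings you flag as the "main bookkeeping obstacle" are handled correctly.
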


\begin{exmp}
Let $B=S^2=\C P^1$.
Let $S^3\times S^5\to X\overset{f}{\to} Y=S^3$ be the fibration given by the model
$$(\Lambda (v),0)\to (\Lambda (v,w_1,w_2),D)\to (\Lambda (w_1,w_2),0) $$
with $|v|=|w_1|=3$, $|w_2|=5$, $Dw_1=0$ and $Dw_2=vw_1$.
Let $L(\C P^2)=L (B\cup_{\alpha} e^4)=(\LL (u_1,u_2),\partial)$
with  $|u_1|=1$, $|u_2|=3$, $\partial u_1=0$ and $\partial u_2=[u_1,u_1]$ \cite{T}.
Let 
$$\xymatrix{S^2\ar[r]^{h_X\ \ \ }\ar[d]_i&  (Baut_1X)_0\ar[d]^{a_f}\\
S^2\cup_{\alpha} e^4 \  \ar[r]_{h_Y}&\ (Baut_1Y)_0
}$$
be  a commutative diagram given by the DGL-model
$$\xymatrix{(\LL (u_1),0)\ar[r]^{h_X\ \ \ \ \ \  }\ar[d]_i& (Der  (\Lambda (v,w_1,w_2)),\partial_X)\ar[d]^{b_f}\\
(\LL (u_1,u_2),\partial )\  \ar[r]_{h_Y}&\ (Der  (\Lambda (v)),0)
}$$
by
$h_X(u_1)=h_Y(u_1)=0$ and $h_Y(u_2)=(v,1)$.
Then ${\mathcal O}_{\alpha}(h_X, h_Y)\neq 0$ in $H_2(Der  (\Lambda (w_1,w_2), \Lambda (v,w_1,w_2))$
since
$$\tau h_Y(u_2)=\partial_X(v,1)=(w_2,w_1)\not\sim 0=h_X''([u_1,u_1])=h_X''(\partial_{\alpha}(u_2)).$$
Thus $h_Y:\C P^2\to  (Baut_1Y)_0$ cannot lift to  $h:\C P^2\to  (Baut_1X)_0$. 
Note that $h_Y$ is extended to $\C P^{\infty}\to  (Baut_1Y)_0$.
Since $BS^1=\C P^{\infty}$, we obtain that  any  free $S^1$-action on $Y$ cannot lift to $X$
 (See \S 5).
\end{exmp}

\section{ An application to lifting actions }

Let $BG$ and $EG$ be  the classifying space and the universal space of a compact connected Lie group  $G$ of $\rank \ G=r$, respectively.
If  $G$ acts on a space $Y$
by $\mu :G\times Y\to Y$, there is the Borel fibration
$$
Y \overset{i}{\to} EG \times_{G}^{\mu} Y \to BG,
$$
where the Borel space 
$ EG \times_{G}^{\mu} Y $ (or simply $ EG \times_{G} Y $)  is the orbit  space of the diagonal  action
$g(e,y)=(eg^{-1},gy)$  
on the product $ EG \times  Y $.
It is rationally given by the  KS extension (model)
$$
(\Q[t_1,\dots,t_r],0)
 \to (\Q[t_1,\dots,t_r] \otimes \Lambda {V},D_{\mu})
 \to (\Lambda {V},d)=M(Y)\ \ \ \ (*)$$
where     $|{t_i}|$ are even for $i=1,\dots,r$, $D_{\mu}(t_i)=0$ and
$D_{\mu}(v) \equiv d(v)$ modulo the ideal $(t_1,\dots,t_r)$ for $v\in V$.

Recall the lifting theorem of D. H. Gottlieb:
\begin{thm}\cite[Theorem 1]{Gol}\label{Gott}
Let a topological group  $G$ acts on a space $Y$.
A fibration $X\overset{f}{\to} Y$ is fibre homotopy equivalent to a $G$-fibration if and only if  it is fibre homotopy equivalent to the pull-back of a fibration over $EG\times_GY$ induced by the inclusion $i: Y\to EG\times_GY$.
\end{thm}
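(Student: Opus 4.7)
The plan is to establish a correspondence, up to fibre homotopy equivalence over $Y$, between $G$-fibrations over $Y$ and ordinary fibrations over the Borel construction $EG\times_GY$, with the two directions realized by the Borel construction applied fibrewise and by pullback along $i:Y\to EG\times_GY$. The whole argument rests on the principal $G$-bundle $\pi:EG\times Y\to EG\times_GY$ (with $G$ acting diagonally by $g\cdot(e,y)=(eg^{-1},gy)$) and on the identification of the fibre of $EG\times_GY\to BG$ with $Y$ itself realized by $i$.

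For the $(\Rightarrow)$ implication, I would begin with a $G$-equivariant fibration $f_0:X_0\to Y$ which is fibre homotopy equivalent to $f$, and form its Borel construction $\tilde f_0 = 1\times_G f_0 : EG\times_G X_0\to EG\times_GY$. This is a fibration because $f_0$ is an equivariant fibration and $G$ acts freely on $EG$. Then I would compute the pullback $i^*\tilde f_0$ directly: a point of $i^*(EG\times_G X_0)$ is a pair $(y,[e,x])$ with $[e_0,y]=[e,f_0(x)]$, which via the free $G$-action determines a unique $g\in G$ with $e=e_0 g$ and $f_0(x)=g^{-1}y$; the assignment $(y,[e,x])\mapsto gx$ provides a natural isomorphism $i^*\tilde f_0\cong f_0$ over $Y$. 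Composing with the fibre homotopy equivalence $f_0\simeq f$ yields the desired $\eta = \tilde f_0$.

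For the $(\Leftarrow)$ implication, suppose $\eta:E'\to EG\times_GY$ is a fibration with $i^*\eta$ fibre homotopy equivalent to $f$. I would pull $\eta$ back along $\pi$ to get a fibration $\pi^*E'\to EG\times Y$ which inherits a $G$-action from the diagonal action on $EG\times Y$, and then compose with the $G$-equivariant projection $\mathrm{pr}_Y:EG\times Y\to Y$, a fibration with contractible fibre $EG$. This produces a $G$-equivariant fibration $f':\pi^*E'\to Y$. To see that $f'$ is fibre homotopy equivalent to $f$, I would use the (non-equivariant) section $s:Y\to EG\times Y$, $y\mapsto(e_0,y)$: this $s$ is a homotopy equivalence over $Y$ since $EG$ is contractible, and $s^*(\pi^*E')=i^*\eta$ on the nose, so $f'$ is fibre homotopy equivalent to $i^*\eta$, hence to $f$.

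The main technical hurdle will be the fibre homotopy equivalence $f'\simeq i^*\eta$ in the $(\Leftarrow)$ direction: the section $s$ is not $G$-equivariant, so one must carefully separate the non-equivariant fibre homotopy equivalence sought in the conclusion from any equivariant comparison, and the cleanest way is to invoke that pullback along a fibre homotopy equivalence of base spaces is a fibre homotopy equivalence of total spaces. A secondary bookkeeping issue is ensuring every map in sight is a genuine Hurewicz fibration, which holds automatically for CW spaces and a compact Lie group $G$, but in general requires fibrant replacement.
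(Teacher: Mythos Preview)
The paper does not supply a proof of this statement: Theorem~\ref{Gott} is quoted verbatim from Gottlieb's paper \cite[Theorem~1]{Gol} and is used as a black box in the proof of Theorem~\ref{lift}. So there is no ``paper's own proof'' to compare against.

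Your outline is a reasonable reconstruction of the classical argument. The $(\Rightarrow)$ direction, passing from a $G$-fibration $f_0$ to its Borel construction $EG\times_G X_0\to EG\times_G Y$ and then verifying $i^*(1\times_G f_0)\cong f_0$ by the unique-$g$ computation, is exactly how this is usually done. For $(\Leftarrow)$, your construction $f'=\mathrm{pr}_Y\circ(\pi^*E'\to EG\times Y)$ is correct, and the identification $s^*(\pi^*\eta)=(\pi\circ s)^*\eta=i^*\eta$ together with the fact that $s$ is a fibre homotopy equivalence over $Y$ (fibre $EG$ contractible) is the right way to finish. Your cautionary remarks about $s$ being non-equivariant and about fibrant replacement are apt but not obstacles: the conclusion only asks for a non-equivariant fibre homotopy equivalence, and in Gottlieb's original setting one works in a convenient category where the needed maps are Hurewicz fibrations. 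One small point worth making explicit is why $\pi^*E'\to Y$ is a \emph{$G$-fibration} and not merely a $G$-equivariant map that happens to be a fibration; this follows because $\pi^*E'\to EG\times Y$ is the pullback of a fibration along a free $G$-quotient, hence itself a $G$-fibration, and $\mathrm{pr}_Y$ is a trivial $G$-fibration.
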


\noindent
{\it Proof of Theorem \ref{lift}.}
Let $h_Y: BG\to (Baut_1Y)_0$ be the rationalization of the classifying map of the Borel fibration 
$Y\overset{i}{\to} EG \times_{G}^{\mu} Y \to BG$ of the action $\mu : G\times Y\to Y$.
Let $B^n$ be the n-skelton of a CW complex $B$.
From Theorem  \ref{obstr} there is a lift $h_X^{\alpha}$ such that 
$${\small \xymatrix{(BG)^n\ar@{.>}[r]^{h_X^n }\ar[d]&  (Baut_1X)_0\ar[d]^{a_f}\\
(BG)^n\cup_{\alpha}  \vee_ie_i^{n+1} \  \ar[r]_{h_Y^{\alpha}}\ar@{.>}[ur]^{h_X^{\alpha}}&\ (Baut_1Y)_0
}}$$ is commutative for all $n$ and attachings $\alpha$  since 
${\mathcal O}_{\alpha}(h_X^n, h_Y^{\alpha})=0$.
Indeed, $\pi_{odd}(Baut_1f)_{\Q}=0$ and 
 $L(BG)$ is oddly graded since $H^*(BG;\Q )$ is evenly graded.
Thus we have the commutative diagram:
$${\small \xymatrix{BG\ar@{.>}[r]^{ {h_X} \ \ }\ar@{=}[d]&  (Baut_1X)_0\ar[d]^{a_f}\\
BG \  \ar[r]_{h_Y\ \ }&\ (Baut_1Y)_0
}}$$
From Theorem \ref{xy}
there is a commutative digram:
$${\small  
\xymatrix{  {E}\ar@{.>}[d]_{g}\ar[r]& BG_0\ar@{=}[d]\\
 (EG\times_GY)_0\ar[r] &BG_0.
\\
}}$$
for some space $E$.
Let  $g':E'\to EG\times_GY$ be the pull-back of $g$ by the rationalization $l_0$ and $f':X'\to Y$ be the  pull-back of $g'$ by $ i$: 
$${\small  
\xymatrix{ X'\ar@{.>}[d]_{f'}\ar[r]^{}&
 E'\ar@{.>}[d]_{{g' }}\ar[r]^{}& E\ar[d]^{g}\ar[r]& BG_0\ar@{=}[d]\\
Y\ar[r]^{i\ \ }& 
{ EG\times_GY}\ar[r]^{l_0} & (EG\times_GY)_0\ar[r] &BG_0.
\\
}}$$
Notice that the model is given by the DGA-commutative digram:
$${\small  
\xymatrix{R\ar@{=}[d]\ar[r]^{}&
R\otimes \Lambda V \ar[d]_{{M(g) }}\ar[r]^{=}& R\otimes \Lambda V\ar[d]_{M(g')}\ar[r]& \Lambda V\ar[d]_{M(f')}\\
R\ar[r]^{}& 
{ R\otimes \Lambda V\otimes \Lambda W}\ar[r]^{=} &R\otimes \Lambda V\otimes \Lambda W\ar[r] &\Lambda V\otimes \Lambda W
\\
}}$$
for $R:=H^*(BG;\Q )=\Q [t_1,\cdots ,t_r]$.
Notice that the third square is given  by the push-out \cite[Proposition 15.8]{FHT}.  
Thus,  from Theorem \ref{Gott}, we obtain  the commutative diagram
$${\small  
\xymatrix{ G\times X'\ar[d]_{f'\times id_G}\ar@{.>}[r]^{\exists}&
X'\ar[d]_{{f' }}\ar[r]^{l_0}& X'_0\ar[d]_{f'_0}\ar[r]^{\simeq}& X_0\ar[ld]^{f_0}\\
G\times Y\ar[r]^{\mu }& 
{ Y}\ar[r]^{l_0} & Y_0 &
\\
}}$$
since $M(X')\cong  \Lambda V\otimes \Lambda W=M(X)$.
\hfill\qed\\ 

If the  $r$-torus $T^r$ acts on a space $Y$, 
$|t_1|=\cdots =|t_r|=2$  in $(*)$.

\begin{prop}\cite[Proposition 4.2]{H}\label{ha}
Suppose that $Y$ is a simply connected CW-complex  with 
$\dim H^*(Y;\Q)<\infty$.
Put $M(Y)=(\Lambda V,d)$.
Then  $r_0(Y) \ge r$ if and only if there is a KS extension $(*)$
 satisfying $\dim H^*(\Q[t_1,\dots,t_r] \otimes \wedge{V},D)<\infty$.
 Moreover,
 if  $r_0(Y) \ge r$,
 then $T^r$ acts freely on a finite complex $Y'$
 that has  the same rational homotopy type as $Y$
 and $M(ET^r\times_{T^r}Y')\cong 
(\Q[t_1,\dots,t_r] \otimes \wedge{V},D)$.
\end{prop}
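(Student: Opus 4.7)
The plan is to prove the biconditional characterization of $r_0(Y) \ge r$ first, and then derive the finite-complex ``moreover'' statement as a byproduct of the construction used in the converse direction.

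For the direction $r_0(Y) \ge r \Rightarrow$ existence of a KS extension $(*)$ with $\dim H^*(\Q[t_1,\dots,t_r]\otimes \Lambda V, D) < \infty$, I would start from an almost free $T^r$-action on a finite CW complex $Y'$ in the rational homotopy type of $Y$. Form the Borel fibration $Y' \to ET^r \times_{T^r} Y' \to BT^r$. Its Sullivan model is of the form $(*)$ by the naturality of the KS construction for fibrations with simply connected fiber. Finiteness of $H^*$ of the middle term follows from a geometric observation: since all isotropy subgroups are finite, they are rationally trivial, so the quotient map $ET^r \times_{T^r} Y' \to Y'/T^r$ has rationally contractible fibers, and $Y'/T^r$ is a finite-dimensional orbit space because $Y'$ is finite.

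For the converse, I would apply Sullivan's spatial realization to $(\Q[t_1,\dots,t_r]\otimes \Lambda V, D)$ to obtain a rational fibration $Y_0 \to E_0 \to (BT^r)_0$ with $\dim H^*(E_0;\Q) < \infty$. The heart of the argument is to upgrade this rational picture to an honest almost free $T^r$-action on a finite complex $Y'$ with $Y'_0 \simeq Y_0$. First, use that $Y$ is elliptic under the hypotheses of the paper (finite rational homotopy \emph{and} cohomology) to produce a finite CW representative $Y'$ of the rational homotopy type. Second, use the algebraic $\Q[t_1,\dots,t_r]$-module structure on $(\Lambda V, d)$ encoded by $D$ to build a $T^r$-action (up to rational equivalence) on $Y'$ whose Borel model recovers the given KS extension. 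Almost-freeness is forced by Borel's localization theorem: a positive-dimensional isotropy subtorus would contribute a full polynomial subalgebra on localizing at the corresponding prime ideal of $\Q[t_1,\dots,t_r]$, contradicting the finite-cohomology hypothesis on the total space. The ``moreover'' identification $M(ET^r \times_{T^r} Y') \cong (\Q[t_1,\dots,t_r]\otimes \Lambda V, D)$ is then immediate from the construction via naturality.

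The main obstacle will be this realization step in the converse direction: translating purely algebraic DGA data into a genuine topological $T^r$-action on a finite complex is not formal. One has to interpolate between Sullivan's realization (which easily produces a rational $T^r$-space with the correct Borel model) and a finite CW model of $Y$, and then verify that the action descends. The ellipticity assumption is essential here so that finite CW models exist, and Borel localization is the conceptual mechanism that guarantees the produced action is almost free. Everything else, including the ``moreover'' clause and the identification of the Borel model, is bookkeeping once this realization has been carried out.
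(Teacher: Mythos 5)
The paper itself does not prove this statement: it is quoted verbatim from Halperin \cite[Proposition 4.2]{H}, so the comparison has to be with the standard argument there. Your forward direction is essentially that argument and is fine: for an almost free action the projection $ET^r\times_{T^r}Y'\to Y'/T^r$ has fibres $BG_y$ with $G_y$ finite, hence rationally acyclic, so $H^*(ET^r\times_{T^r}Y';\Q)\cong H^*(Y'/T^r;\Q)$ is finite-dimensional and the Borel fibration supplies the KS extension $(*)$.

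The converse, which you correctly identify as the crux, has a genuine gap: you propose to first fix a finite CW representative $Y'$ of the rational homotopy type of $Y$ and then ``build a $T^r$-action on $Y'$ whose Borel model recovers the given KS extension,'' but that sentence is precisely the content of the proposition and no mechanism is given; there is in general no way to equip a pre-chosen finite model with such an action, and the complex must be manufactured together with the action. The actual construction goes the other way around. The total DGA $(\Q[t_1,\dots,t_r]\otimes\Lambda V,D)$ is simply connected with finite-dimensional cohomology, so its rational homotopy type contains a finite CW complex $E'$. The generators $t_i$ give classes in $H^2(E';\Q)^r$, i.e.\ a map $E'\to (BT^r)_0$; after multiplying by a common integer (which changes nothing rationally, amounting to a finite covering of the torus) one obtains an honest map $E'\to BT^r=K(\Z^r,2)$. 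Pulling back the universal principal bundle $ET^r\to BT^r$ along this map yields a principal $T^r$-bundle $Y'\to E'$: the action on $Y'$ is \emph{free by construction}, $Y'$ is a finite complex, the fibre comparison gives $Y'_0\simeq Y_0$, and $ET^r\times_{T^r}Y'\simeq E'$ realizes the prescribed model. This also exposes two further defects in your sketch: your appeal to Borel localization could at best certify almost-freeness, whereas the ``moreover'' clause asserts a genuinely free action (which the principal-bundle construction delivers for free); and ellipticity of $Y$ is neither assumed in Halperin's proposition nor needed --- the finiteness that matters is that of $H^*(\Q[t_1,\dots,t_r]\otimes\Lambda V,D)$, which is the hypothesis.
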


\noindent
{\it Proof of Corollary \ref{coro}.}
Let $r_0(Y)=r$. 
Notice that 
$L(BT^r)$ is oddly generated since
$H^*(BT^r;\Q )=\Q [t_1,\cdots ,t_r]$. 
Since  $\pi_{odd}(Baut_1f)_{\Q}=0$, there exists   a lift $(BT^r)_0\to (Baut_1X)_0$
from  Theorem \ref{obstr} (Corollary \ref{X-Y}).
Then we have the homotopy commutative diagram:
$$\xymatrix{(F_f)_0\ar[r]\ar[d]^{\simeq}&X_0\ar[d]\ar[r]^{f_0}&\ar[d]Y_0\ar[r]^{\simeq } &Y'_0\ar[d]\\
F_g\ar[d]\ar[r]&\tilde{E}\ar@{.>}[r]^{\exists{g}\ \ \ \ }\ar[d]&E\ar[d]\ar[r]^{\simeq\ \ \ \ \ }&(ET^r\times_{T^r}Y')_0\ar[d]\\
\bullet \ar[r]&(BT^r)_0\ar@{=}[r]&(BT^r)_0\ar@{=}[r]&(BT^r)_0
}$$
 from Theorem \ref{xy}.
Here $\bullet$ is the one point space.
We have  $\dim H^*(\tilde{E};\Q )<\infty$ since $\dim H^*(F_g;\Q )<\infty$ and $\dim H^*(E;\Q )=\dim H^*(ET^r\times_{T^r}Y;\Q )<\infty$
for the fibration $F_g\to \tilde{E}\to E$.
Thus there is a free $T^r$-action on $X'$
with $X'_0\simeq X_0$ and $\tilde{E}\simeq (ET^r\times_{T^r}X')_0$ from Proposition \ref{ha}.
Thus we have $r_0(X)\geq r$.
\hfill\qed\\

\begin{exmp} 
Let $S^5\to X\overset{f}{\to}  Y$ be a rationally non-trivial fibration
given by the model  $$(\Lambda V,d_Y)=(\Lambda (v_1,v_2,v_3,v_4,v_5),d_Y)\to (\Lambda (v_1,v_2,v_3,v_4,v_5,w),D)  $$
with $|v_1|=|v_2|=2$, $|v_3|=|v_4|=|v_5|=|w|=5$, 
$d_Y(v_1)=d_Y(v_2)=0$, $d_Y(v_3)=v_1^3$, $d_Y(v_4)=v_1^2v_2$, $d_Y(v_5)=v_2^3$
and  
$D(w)=v_1v_2^2$.
Then $$\pi_{odd} (Baut_1f)_{\Q}\cong H_{even}(Der_{\Lambda V}(\Lambda V\otimes \Lambda (w)))=0$$
since there is no element of odd-degree $<5$ in $\Lambda V$.
Therefore  $r_0(Y)\leq r_0(X)$.
Indeed, we can  directly check that $r_0(Y)=1$ and $r_0(X)=2$.

On the other hand, 
let $S^5\to X\overset{f}{\to}  Y=S^3\times S^3$ be a rationally non-trivial fibration.
Then the model is given by
$$(\Lambda (v_1,v_2),0)\to (\Lambda (v_1,v_2,w),D)  $$
with $|v_1|=|v_2|=3$, $|w|=5$, 
and  
$D(w)=v_1v_2$.
Then $$\pi_{3} (Baut_1f)_{\Q}\cong H_{2}(Der_{\Lambda V}(\Lambda V\otimes \Lambda (w)))=\Q \{ (w,v_1)\} \oplus \Q  \{ (w,v_2)\} \neq 0$$
and   $r_0(Y)=2>1= r_0(X)$.



\end{exmp}

Finally recall \cite{Y2}.
For a map $f:X\to Y$, we say  that  the {\it rational toral rank of $f$},
denoted as  $r_0(f)$,    
 is $r$ when it is  the largest  integer  such that there is  a map $F$ between 
an $X_0$-fibration and a $Y_0$-fibration  over $(BT^r)_{0}$  as
$$ {\small \xymatrix{X_{0}\ar[d]_{i_1}\ar[r]^{f_{0}}& Y_{0}\ar[d]^{i_2}\\
E_1\ar[d]_{p_1}\ar@{.>}[r]^{F} &E_2\ar[d]^{p_2}\\
(BT^r)_{0}\ar@{=}[r]&(BT^r)_{0}
}}$$
with $\dim H^*(E_i;\Q)<\infty$ for $i=1,2$.
From the definition, 
$r_0( f)\leq \min \{  r_0(X),r_0(Y) \}$
for any map $f:X\to Y$.
From the proof of Corollary \ref{coro},
we obtain

\begin{cor}\label{} Let $f:X\to Y$  
be a $\pi_{\Q}$-separable map 
with  $Y$ and $F_f$  finite.
If  $\pi_{odd}(Baut_1f)_{\Q}=0$, we have
$r_0(f)= r_0(Y)$. 
\end{cor}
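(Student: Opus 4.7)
The plan is to establish the two inequalities separately. The upper bound $r_0(f) \leq r_0(Y)$ is immediate from the remark just before the corollary, which records that $r_0(f) \leq \min\{r_0(X), r_0(Y)\}$ holds for any map by the very definition of $r_0(f)$, since the $Y_0$-fibration over $(BT^r)_0$ that appears in the diagram witnesses $r_0(Y) \geq r$.

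For the lower bound $r_0(f) \geq r_0(Y)$, I would set $r = r_0(Y)$ and reuse the construction from the proof of Corollary \ref{coro}. By Proposition \ref{ha}, there is a free $T^r$-action on a finite complex $Y'$ rationally equivalent to $Y$, yielding a Borel fibration $Y_0 \to (ET^r \times_{T^r} Y')_0 \to (BT^r)_0$ with finite-dimensional rational cohomology in the total space. Because $L(BT^r)$ is concentrated in odd degrees and $\pi_{\mathrm{odd}}(Baut_1 f)_{\Q} = 0$, Theorem \ref{obstr} (applied skeleton-by-skeleton as in the proof of Theorem \ref{lift}) produces a lift of the classifying map $h_Y : (BT^r)_0 \to (Baut_1 Y)_0$ to $h_X : (BT^r)_0 \to (Baut_1 X)_0$. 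Theorem \ref{xy} then provides a fibration $\tilde{E} \to (BT^r)_0$ with fiber $X_0$ together with a map $g : \tilde{E} \to (ET^r \times_{T^r} Y')_0$ of fibrations over $(BT^r)_0$ extending $f_0$ on the fibers.

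It remains to verify the finiteness condition in the definition of $r_0(f)$, namely that $\dim H^*(\tilde{E};\Q) < \infty$. This is precisely the step carried out in the proof of Corollary \ref{coro}: the fibration $F_g \to \tilde{E} \to (ET^r \times_{T^r} Y')_0$ has total space with finite-dimensional rational cohomology because both the base (by Proposition \ref{ha}) and the fiber $F_g \simeq (F_f)_0$ (which is finite by hypothesis) do, and the Serre spectral sequence then forces $\dim H^*(\tilde{E};\Q) < \infty$.

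Combining these ingredients, the diagram
$${\small \xymatrix{X_0 \ar[d] \ar[r]^{f_0} & Y_0 \ar[d] \\
\tilde{E} \ar[d] \ar[r]^{g\ \ \ \ \ \ } & (ET^r \times_{T^r} Y')_0 \ar[d] \\
(BT^r)_0 \ar@{=}[r] & (BT^r)_0
}}$$
exhibits the data required by the definition of $r_0(f)$ for this value of $r$, so $r_0(f) \geq r = r_0(Y)$. The main conceptual step is the lifting of the classifying map (which is where the hypothesis $\pi_{\mathrm{odd}}(Baut_1 f)_{\Q} = 0$ enters through Theorem \ref{obstr}); everything else is bookkeeping already performed in the proof of Corollary \ref{coro}.
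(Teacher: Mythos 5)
Your proposal is correct and follows essentially the same route as the paper, which simply points back to the proof of Corollary \ref{coro}: the upper bound $r_0(f)\leq r_0(Y)$ is the definitional inequality recorded before the statement, and the lower bound is witnessed by the diagram $g:\tilde{E}\to (ET^r\times_{T^r}Y')_0$ over $(BT^r)_0$ already constructed there, with the finiteness of $H^*(\tilde{E};\Q)$ obtained exactly as you describe. No gaps.
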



\begin{thebibliography}{amsalpha}


\bibitem{AP}
C. Allday and V. Puppe,
{\it Cohomological methods 
in transfomation groups},
Cambridge Univ. Press {\bf 32} [1993] 

\bibitem{Al} G. Allaud, {\it On the classification of fiber spaces}, Math. Z. {\bf 92} (1966) 110-125

\bibitem{BHMP} P. Booth, P. Heath, C. Morgan and R. Piccinini,
{\it H-spaces of self-equivalences of fibrations and bundles},
Proc. London Math. (3) {\bf 49} (1984) 111-127

\bibitem{BS} U. Buijs and S. B. Smith, {\it Rational homotopy type of the  classifying space for 
fiberwise self-equivalenvces}, Proc. A.M.S. {\bf 141} (2013) 2153-2167 



\bibitem{DL}
A. Dold and R. Lashof, {\it Principal quasi-fibrations and fibre homotopy equivalence
of bundles}, Illinois J. Math. {\bf 3} (1959), 285-305


\bibitem{FH}Y. F\'{e}lix and S. Halperin,
{\it Rational LS category and its applications,}
Trans.A.M.S. {\bf 273} (1982) 1-38

\bibitem{FHT}
Y.~F\'{e}lix, S.~Halperin and J.~C.~Thomas,
\textit{Rational homotopy theory},
Graduate Texts in Mathematics \textbf{205}, Springer-Verlag, (2001).

\bibitem{FLS}
Y.~F\'{e}lix,  G. Lupton and S. B. Smith,
{\it The rational homotopy type of the  space of self-equivalences of a fibration},
Homology Homotopy  Appl. {\bf 12} (2010) 371-400 


\bibitem{FOT}
Y. F\'{e}lix, J. Oprea  and  D. Tanr\'{e},
{\it Algebraic models in geometry}, Oxford G.T.M. {\bf 17} (2008)



\bibitem{G} J. B. Gatsinzi, {\it LS-category of  classifying spaces},
Bull. Belg. Math. Soc. Simon Stevin {\bf 2}, no 2 (1995)  121-126


\bibitem{G2} D. H. Gottlieb, {\it On fibre spaces 
and the evaluation maps},
Ann. Math. {\bf 87} (1968) 42-55 


\bibitem{Gott} D. H. Gottlieb, {\it Evaluation subgroups of homotopy groups},
Amer. J. Math. {\bf 91} (1969) 729-756 

\bibitem{Gol} D. H. Gottlieb, {\it Lifting actions in fibrations}, 
Springer L.N.M. {\bf 657} (1978) 217-254


\bibitem{Hal}
S. Halperin, {\it Finiteness in the minimal models of Sullivan},
Trans. A.M.S. {\bf 230} (1977) 173-199

\bibitem{H}
S. Halperin,
\textit{Rational homotopy and torus actions},
London Math. Soc. Lecture Note Series \textbf{93}, 
Cambridge Univ. Press (1985) 293-306

\bibitem{HMR} P. Hilton, G. Mislin and J. Roitberg,
{\it Localization of nilpotent groups and spaces}, North-Holland Math. Studies
{\bf 15} (1975)




\bibitem{JL} B. Jessup and G. Lupton, {\it Free torus actions and two-stage spaces},
Math. Proc. Cambridge Phil. Soc. {\bf 137} (2004) 191-207







\bibitem{Lu}G. Lupton, {\it Note on a Conjecture of Stephen Halperin}, Springer L.N.M. {\bf 1440} (1990) 148-163

\bibitem{LS}G. Lupton and S. B. Smith, {\it Realizing spaces as classifying spaces},
   Proc. A.M.S. {\bf 144} (2016) 3619-3633





\bibitem{M}W. Meier, {\it Rational universal fibrations and flag manifolds},
Math.Ann. {\bf 258} (1982) 329-340

\bibitem{NY}H. Nishinobu and T. Yamaguchi,
{\it Sullivan minimal  models  of classifying spaces
 for non-formal spaces of small rank},
Topology and its Appl. {\bf 196} (2015) 290-307


  

\bibitem{Sa}P. Salvatore, {\it Rational homotopy nilpotency of self-equivalences},
Topology and its Appl.
{\bf 77} (1997) 37-50

\bibitem{ST}H. Shiga and M. Tezuka, {\it Rational fibrations, homogeneous spaces with positive Euler characteristics 
and Jacobians},
Ann. Inst. Fourier (Grenoble) {\bf 37} (1987) 81-106






\bibitem{Sm} S. B. Smith, {\it The classifying sapce for fibrations and rational homotopy theory},
the 5th GeToPhyMa Summer School on ``Rational Homotopy Theory and its Interactions'' in  Rabat  (2016) 

\bibitem{Sta}J. Stasheff, {\it A classification theorem for fibre spaces},
Topology (1963) 239-246

\bibitem{S}D. Sullivan, {\it Infinitesimal computations in topology},
I.H.E.S., {\bf 47} (1978) 269-331

\bibitem{T}
D. Tanr\'{e},  {\it Homotopie Rationnelle:
Mod\`{e}les de Chen, Quillen, Sullivan},
Lecture Note in Math. Springer {\bf 1025} (1983) 

\bibitem{Th}J. C. Thomas, {\it Rational homotopy of Serre fibrations},
Ann. Inst. Fourier {\bf 331} (1981) 71-90 

\bibitem{Y}T. Yamaguchi, {\it A rational obstruction to be a Gottlieb map},
J. Homotopy and Related Structures {\bf 5(1)}. (2010) 97-111 


\bibitem{Y2}T. Yamaguchi, {\it Rational toral rank of a map},
Bulletin  Mathematique  de Roumanie  {\bf 57} (2014) 437-445 
\end{thebibliography}
\end{document}